\documentclass[11pt]{book}
\usepackage[english]{babel}

\usepackage{epsfig}
\usepackage{amsfonts}
\usepackage{amssymb}
\usepackage{amsmath}
\usepackage{amsthm}
\usepackage{latexsym}
\usepackage{mathrsfs}
\usepackage{enumerate}
\usepackage{array}
\usepackage{xypic}
\usepackage[vcentermath,enableskew]{youngtab}
\usepackage[nottoc]{tocbibind}
\usepackage[margin=0.7in]{geometry}

\newtheorem{thm}{Theorem}[chapter]
\newtheorem{cor}[thm]{Corollary}
\newtheorem{lem}[thm]{Lemma}

\newtheorem{prop}[thm]{Proposition}
\theoremstyle{definition}

\theoremstyle{remark}
\newtheorem{rmk}[thm]{Remark}
\DeclareMathOperator{\trace}{tr}
\DeclareMathOperator{\height}{ht}
\DeclareMathOperator{\aut}{Aut}
\DeclareMathOperator{\res}{Res}
\DeclareMathOperator{\ind}{Ind}
\DeclareMathOperator{\ch}{ch}
\DeclareMathOperator{\sgn}{sgn}
\DeclareMathOperator{\imagp}{Im}
\DeclareMathOperator{\realp}{Re}
\DeclareMathOperator{\var}{Var}
\newcommand{\round}[1]{\left[#1\right]}
\newcommand{\fractional}[1]{\left\{#1\right\}}

\newcommand{\C}{\mathbb{C}}
\newcommand{\R}{\mathbb{R}}
\newcommand{\Z}{\mathbb{Z}}

\newcommand{\Q}{\mathbb{Q}}
\newcommand{\Young}{\mathcal{Y}}
\newcommand{\w}{\mathsf{w}}
\newcommand{\p}{\mathbf{p}}
\newcommand{\s}{\mathbf{s}}
\newcommand{\g}{\mathbf{g}}
\newcommand{\f}{\mathbf{f}}
\newcommand{\pbar}{\overline{\mathbf{p}}}

\newcommand{\sphere}{\mathbb{CP}^1}
\newcommand{\moduliquadratic}{\mathcal{Q}_g}

\newcommand{\pcop}{\mathfrak{W}}

\title{Near-Involutions, the Pillowcase Distribution, and Quadratic Differentials}
\author{Rodolfo Antonio R\'ios Zertuche R\'ios Zertuche}

\date{September 2012}

\begin{document}
\maketitle
\frontmatter

\chapter{Abstract}
In the context of A. Eskin and A. Okounkov's approach to the calculation of the volumes of the different strata of the moduli space of quadratic differentials, two objects have a prominent role. Namely, the characters of near-involutions and the pillowcase weights.  For the former we give a fairly explicit formula. On the other hand, the pillowcase weights induce a distribution on the space of Young diagrams. We analyze this distribution and prove several facts, including that its limit shape corresponds to the one induced by the uniform distribution, that the probability concentrates on the set of partitions with very similar 2-quotients, and that there is no hope for a full Central Limit Theorem.

This is a reformatted version of the author's Ph.D. thesis, advised by Professor Andrei Okounkov. The results will be published in a forthcoming paper.

\chapter{Acknowledgements}

I am deeply indebted to my parents, for their unconditional support throughout every challenge that I have faced in life, and also to Professor Xavier G\'omez-Mont, that rare yet crucial mentor who will go all the way for a student he believes in.

I am very grateful to my adviser, Professor Andrei Okounkov, for his generosity at accepting me as his student, for his valuable guidance, and for sharing his good wisdom with me.

I am grateful to Professor Grigori Olshanski for several enlightening conversations on the topic of my thesis and for reading this work, and to Professor John N. Mather for numerous enlightening conversations on other topics.

I am also grateful to Professor Joseph Kohn, who was my first-year adviser and gave me an important push toward confidence.

I cannot cease to thank Jill LeClair for significantly lightening my life and the life of all the graduate students in the department.

The completion of this project would have been impossible without the continued support of my brothers Diego and Luis, and of many friends, among whom I want to name especially Ren\'e Flores, Anand Murugan, Jes\'us Puente, Jorge Saavedra, and P\'eter Varj\'u.
Of great importance too were the people who prepared me for this endeavor, among whom I want to name especially my teachers Professor Gonzalo Contreras and Professor Renato Iturriaga, and my friends Andr\'es Mart\'inez Arizpe and Jos\'e Luis Mart\'inez Meyer.



I also want to acknowledge the generous support of a \textsc{conacyt}-Mexico fellowship, a scholarship from the Mexican Secretariat of Public Education (\textsc{sep}), and a fellowship from Princeton University.

\vspace {.7cm}
\hspace{7cm}\textsc{r.a.r.z.r.z.}

\hspace{7cm}Princeton, \textsc{n.j.}, May, 2012

\tableofcontents
\mainmatter
\chapter{Introduction}
\textit{This is a reformatted version of the author's dissertation, advised by Professor 
Andrei Okounkov, submitted to the Faculty of Princeton University in candidacy for the 
degree of Doctor of Philosophy and defended on August 29, 2012. The results will be published in a forthcoming paper.}
\section{Overview of results}

This thesis presents progress in the computation of the volumes of the moduli spaces of quadratic differentials on Riemann surfaces. As it will be explained in Section \ref{innature}, quadratic differentials arise in the study of several dynamical systems, like billiards and interval-exchange transformations. The problem of finding the volumes of the different strata of their moduli spaces was solved by A. Eskin and A. Okounkov \cite{pillow}. This thesis is motivated by the pursuit of a closed formula for these volumes. Such a formula is not directly available from the methods described in \cite{pillow}.

Our work builds on the program proposed by A. Eskin and A. Okounkov \cite{pillow}, which we explain in Section \ref{motivationsection}. We proceed now to give a quick overview of our results. Most of the objects defined here are motivated by \cite{pillow}. The reader is referred to Appendix \ref{prerequisitesappendix} as a reference for the less standard background material we will use.

A \emph{partition} of a positive integer $n$ is a decomposition of $n$ as a sum of positive integers. To each partition $\lambda$ corresponds an irreducible linear representation of the symmetric group $S(n)$, with character $\chi^\lambda:S(n)\to\R$ and dimension $\dim \lambda$.

Denote by $\mathbf f_\eta$ the expression
\[\mathbf f_\eta(\lambda)=\left|C_\eta\right|\frac{\chi^\lambda(\eta)}{\dim\lambda},\]
where $C_\eta$ denotes the conjugacy class of the elements of $S(n)$ with cycle type $\eta$. (The function $\mathbf f_\eta$ arises naturally in the way explained in Section \ref{generatingfunctionmanipulations}.)

The \emph{pillowcase weights} $\w$ were first defined by A. Eskin and A. Okounkov \cite{pillow}, as follows:
\begin{equation}\label{pillowcaseweight}
\w(\lambda)=\left(\frac{\dim\lambda}{|\lambda|!}\right)^2\mathbf f_{(2,2,\dots)}(\lambda)^4.
\end{equation}
Here, $|\lambda|$ denotes the sum of the parts $\sum_i \lambda_i$ of the partition $\lambda=(\lambda_1,\lambda_2,\dots,\lambda_k)$.
For a parameter $q$, $|q|<1$, the weights $q^{|\lambda|}\w(\lambda)$ determine a probability distribution, the \emph{pillowcase distribution}, on the space of all partitions $\lambda$ of all the positive integers, after dividing by the normalization constant
 \[Z=\sum_\lambda q^{|\lambda|} \w(\lambda),\]
 which they also computed (see Section \ref{sec:quasimodularitytwoquotients} or \cite[Section 3.2.4]{pillow}). The volumes of the strata of the moduli space of (disconnected) quadratic differentials turn out to be given by the dominant term in the $q\to1$ asymptotics of the expectations $\langle\g_\nu\rangle_{\w,q}$ of the functions
\[\g_\nu(\lambda)=\frac{\mathbf f_{(\nu,2,2,\dots,2)}(\lambda)}{\mathbf
f_{(2,2,\dots,2)}(\lambda)}\]
with respect to this distribution
 (see Section \ref{howtocomputethevolume} for details). Here, the partition $\nu$ encodes the multiplicities of zeros and poles of the quadratic differentials in the corresponding stratum.

 This thesis investigates the numbers $\langle\g_\nu\rangle_{\w,q}$. Our first contribution is a fairly explicit formula for $\g_\nu$, which we present in Section \ref{sec:nearinvolutions}.

 We also prove that the expectations of the shifted Schur functions involved in that formula are quasimodular forms. This is important since it implies that the precise computation of these expectations is within our reach. This is done in Section \ref{sec:quasimodularitytwoquotients}.

 We then focus in the analysis of the pillowcase distribution. We are able to prove that it induces a limit shape, which coincides with the one for the uniform distribution; we explain this in Section \ref{sec:limitshape}. Thus the expectation is multiplicative in its highest degree. 
 However, we give examples that provide a negative answer to the question of existence of a full Central Limit Theorem; see Remark \ref{rmk:noCLT}.

A partition $\lambda=(\lambda_1,\lambda_2,\dots,\lambda_k)$ is said to be \emph{balanced} if its Young diagram can be constructed by concatenating two-cell dominoes $\yng(2)$.
Balanced partitions are in one-to-one correspondence with pairs of partitions $(\alpha,\beta)$ with $|\lambda|/2=|\alpha|+|\beta|$ through the construction of the so-called 2-quotients (see Section \ref{twoquotientssection}).

The weights $\w(\lambda)$ vanish on partitions that are not balanced. In Section \ref{sec:wformulahooks}, we prove the following appealing formula for the pillowcase weights on balanced partitions $\lambda$:
 \[\w(\lambda)=\left(\frac{\prod\textrm{odd hook lengths of $\lambda$}}{\prod\textrm{even hook lengths of $\lambda$}}\right)^2\]
(for the definition of the \emph{hooks lengths}, please see Section \ref{sec:hookformula}). We use this formula in Section \ref{sec:variational} to show that the probability is concentrated on the set of balanced partitions with 2-quotients composed of very similar partitions $\alpha\approx\beta$. This has the consequence, explained in Section \ref{sec:meinardus}, that the first term of the asymptotics of expectations of the form
\[\langle f(\alpha)g(\beta)\rangle_{\w,q}\]
coincide with the first term of the asymptotics of
\[\langle fg\rangle_{\w,q}.\]

From degree considerations (see Remark \ref{rmk:degreeconsiderations}), it is expected that the first term in the asymptotics of our formula for $\g_\nu$ will vanish. In fact, we are able to show this directly in Section \ref{sec:expectationasympt} using the result of Section \ref{sec:meinardus}. We end with a brief discussion of what the next term looks like in Section \ref{sec:nextterm}.

\section{Plan of the thesis}
The rest of this chapter is devoted to explaining the motivations for this thesis.

Chapter \ref{mychapter} is devoted to the study of the functions $\g_\nu$, and Chapter \ref{chap:pillowcase} is mainly about the weights $\w(\lambda)$ and the distribution they induce on the space of Young diagrams.

Appendix \ref{prerequisitesappendix} describes some mathematical prerequisites. 

\section{Motivation}\label{motivationsection}
In their paper \cite{pillow}, A. Eskin and A. Okounkov propose a method to compute the volume of the moduli space of quadratic differentials. This has been the main motivation for our work, so we review their approach here.

While Sections \ref{quadraticdifferentials} and \ref{moduli} mainly collect definitions and facts, the method is explained in Section \ref{howtocomputethevolume}, and additional results that make it work are collected in Sections \ref{pillowcasesection} through \ref{generatingfunctionmanipulations}.
Section \ref{innature} gives a couple of examples that aim to frame our study in a broader context. 
\subsection{Quadratic differentials}\label{quadraticdifferentials}
\paragraph{Abelian differentials.}

Let $S$ be a compact Riemann surface with the topology locally inherited from $\R^2$. Let $\widetilde {\Omega}^1$ denote the pre-sheaf of holomorphic 1-form germs, which to each open set $U$ assigns the complex vector space $\widetilde{\Omega}^1(U)$ of one-forms $\omega$ holomorphic on $U$. Sections of the corresponding sheaf $\Omega^1$ are known as \emph{Abelian differentials}.

\paragraph{Quadratic differentials.}

Define a new pre-sheaf $\widetilde{\mathfrak Q}$ on $S$ by letting $\widetilde {\mathfrak Q}(U)$ be the complex vector space of squares $\omega^2$ of 1-form germs $\omega\in \widetilde{\Omega}^1(U)$. Let $\mathfrak Q$ denote the corresponding sheaf. A \emph{quadratic differential} on $S$ is a meromorphic section of $\mathfrak Q$ with poles of degree at most one. In other words, on an atlas $\{U_i,z_i\}$ of open sets $U_i$ covering $S$ with local coordinates $z_i$, a quadratic differential consists of expressions $f_i(z_i)(dz_i)^2$ on each open set $U_i$, where the functions $f_i$ and $f_j$ are connected on $U_i\cap U_j$ by
\[\frac{f_j(z_j)}{f_i(z_i)}=\left(\frac{dz_i}{dz_j}\right)^2.\]
The ordinary product $(dz)^2$ should not be confused with the antisymmetric wedge product $dz\wedge dz=0$.

\paragraph{Zeroes and poles.}

Together, the zeroes and poles of a quadratic differential are known as its \emph{singular} or \emph{conic points}. Unless it vanishes identically, a quadratic differential has only finitely many singular points.

From the definition above, it follows that the quotient of two quadratic differentials is a meromorphic function on $S$ with as many zeroes as poles (taking their multiplicity into account). It follows that all quadratic differentials on $S$ have the same number of zeroes and poles. This should be twice the number of zeroes and poles on a holomorphic Abelian differential, because a quadratic differential is locally the square of an Abelian differential. By Riemann-Roch, we know that an Abelian differential has $2g-2$ zeroes on $S$, so the quadratic differentials must have a total of
\[\mathrm{zeroes}-\mathrm{poles}=4g-4,\]
counted with multiplicity.

\paragraph{Local square roots.}

If $Q$ does not have a zero or a pole at $p\in S$ then, on a small-enough neighborhood $U$ of $p$, $Q$ has a square root. It suffices to take $U$ to be a small open ball where $Q$ does not vanish and remains holomorphic. Let $w$ be a coordinate on $U$, and $f$ be such that $Q(w)=f(w)\,(dw)^2$. Then the square root of $Q$ on $U$ looks like
\[\sqrt{f(w)}\,dw.\]
One has a choice of two different complex square roots of $f(w)$. This choice can be done consistently throughout a small-enough neighborhood of $p$. Call the resulting form $\omega$. Define $z$ to be
\begin{equation}\label{coordinatechange}
z(w)=\int_p^w\omega,\qquad w\in U
\end{equation}
where the integral is taken over any path joining $p$ and $w$, and contained in $U$; by Cauchy's theorem, $z(w)$ is independent of the chosen path. Since $f$ does not vanish on $U$, $z$ is locally injective. Using $z$ as a new coordinate (possibly within a smaller open set $V\subset U$ that still contains $p$), we see that $dz=\omega$, so that $(dz)^2\equiv Q$.

\paragraph{Flat structure and global square root.}

The form $dz$ induces the structure of a \emph{flat surface} on $U$. A flat surface is a two-dimensional smooth manifold whose transition maps are given by translations $(x,y)\mapsto (x+a,y+b)$ in $\R^2$. We can find such coordinates $x$ and $y$ on $U$ by integrating the real and imaginary parts of $dz=dx+i\,dy$.

The flat structure cannot in general be extended throughout the set
\[S^\circ=S-\{\textrm{singular points of $Q$}\};\]
it may be necessary to have not only translations, but also their composition with reflections $z\mapsto -z$, among the transition maps. An example of this is the pillowcase orbifold; see Section \ref{pillowcasesection}. On the other hand, $S^\circ$ always has a double cover $\widetilde S^\circ$ where the definition of $dz$ can be extended globally, and filling in the holes in the obvious way, $\widetilde S^\circ$ can be completed into a ramified degree-two cover $\widetilde S$ of $S$.

Let us show how to construct $\widetilde S^\circ$. Take an atlas $\{U_i,w_i\}$ on $S^\circ$, such that each open set $U_i$ is simply connected, and $w_i$ is a coordinate on it. We want to produce a new space gluing copies of the sets $U_i$. On $U_i$, write $Q=f_i(w_i)\,(dw_i)^2$, and let $U_i^+$ and $U_i^-$ be two identical copies of $U_i$ corresponding to the two branches of the square root $\sqrt{f_i(w_i)}$. The choice of signs $+$ and $-$ on $U^\pm_i$ is arbitrary, of course, but the resulting construction will not depend on it. Now form the manifold corresponding to the full atlas $\{U_i^\pm,w_i\}$, where the sets $U_i^{s_i}$, $s_i\in\{+,-\}$, are glued along the intersections $U_i^{s_i}\cap U_j^{s_j}$ if the corresponding branches of the square root coincide on $U_i\cap U_j$. Finally, define $dz$ locally on $\widetilde S$ by letting $dz=\sqrt {f_i(dw_i)}dw_i$ or, equivalently, by defining $z$ locally as in equation \eqref{coordinatechange} and taking its differential.

\paragraph{Flat Riemannian metric.}

The form $dz$ induces on each open simply-connected set $U$, along with the flat structure, a Riemannian metric, locally given by $(dx)^2+(dy)^2$. This metric has null curvature throughout $U$. The same can be said about its extension to all of  $\widetilde S^\circ$. Moreover, since the metric is invariant under reflections, it descends to $S^\circ$. Intuitively speaking, we have thus pushed all the curvature of $S$ to a few conic points, where all the curvature is concentrated.

\paragraph{Horizontal and vertical foliations.}

The forms $dx$ and $dy$ induce two foliations of $\widetilde S^\circ$. The leafs are at each point $p$ tangent to the kernel of the respective forms in the tangent space $T_p\widetilde S^\circ$. These foliations are also invariant by local reflections $(x,y)\mapsto (-x,-y)$, so they descend to $S^\circ$. They are known as the \emph{vertical} and \emph{horizontal foliations}, respectively.

If $p$ is a point of $S$, we can consider the leaves of the horizontal and vertical foliations that  have $p$ in their topological closure. If the point $p$ is not a conic point, the picture is the same as for the origin in $\mathbb R^2$: there is one horizontal leaf entering $p$ from left and right, and there is a vertical leaf, coming from above and below. As we go around $p$, they alternate, and we cross two vertical pieces, and two horizontal pieces.

On a conic point, the situation is slightly different. If we move around $p$ on a non-self-intersecting closed loop, the leaves corresponding to the horizontal and vertical foliations again alternate because they are orthogonal to each other. What changes is the number of them that we cross. We could cross $n$ pieces of each type, horizontal or vertical, where $n$ can be any positive integer $\neq 2$. In other words, the angle around each conic point is no longer $2\pi$, but can be $\pi$, $3\pi$, $4\pi$, $5\pi$, $6\pi$,\dots, or $n\pi$ in general.

This is related to the degree of the zero or pole of $Q$ at $p$. If again $Q=f(w)(dw)^2$ locally, and if $f$ can be expanded as a series
\[f(w)\sim c_d(w-p)^d+c_{d+1}(w-p)^{d+1}+\textrm{higher order terms},\quad c_d\neq0,\]
near $p$, that is, $f$ has a zero of degree $d$ at $p$, then the number $n$ of leaves of the horizontal and vertical foliations with $p$ in their closure is $d+2$.
Heuristically, this is because, $dz$ (as above) is locally very much like $\sqrt{(w-p)^d}dw$, so $z$ is $\sim(w-p)^{\frac d2+1}$ (we take $z=0$ at $p$). Now, $z=\pm1$ on its `intersections with the horizontal axis', and $z=\pm i$ on its `intersections with the vertical axis,' and in this case the leaves of the horizontal and vertical foliations play the role of these axes. The two equations $(w-p)^{\frac d2+1}=\pm1$ and $(w-p)^{\frac d2+1}=\pm i$ have $d+2$ solutions each. The solutions to the first equation correspond to the leaves of the horizontal foliation emanating from $p$; the solutions to the second one correspond to the vertical foliation.

In polar coordinates centered at the singular point, the metric will look like
\[dr^2+\left(n \, r\,d\theta\right)^2.\]

\subsection{Examples of quadratic differentials in nature}\label{innature}
In this section we mention some research areas in which quadratic differentials have found applications.

A good overview is given by A. Zorich in \cite{zorich}. He includes applications we will not mention here, like Novikov's problem on the dynamics of Fermi surfaces in the study of electronic configurations in metals.

\paragraph{Billiards.}
Let $P$ be a polygon in the plane $\C$. Inside it, we define the billiard flow by having a point-like particle move in straight lines until it reaches the boundary, at which point it bounces following the usual rules of optical reflection. This dynamical system is known as a \emph{polygonal billiard}. A good introduction to these objects can be found in \cite{tabachnikovmasur,tabachnikov}.

Let $\Gamma\subset O(2)$ be the group generated by the reflections on the sides of $P$. If the size $|\Gamma|$ of this group is finite, $P$ is said to be a \emph{rational billiard}. This is equivalent to all the angles of the polygon $P$ being rational multiples of the number $\pi$.

To associate a surface and a quadratic differential with $P$, we take $|\Gamma|$ copies $gP$ of $P$, $g\in\Gamma$, and we think of each of them as being the image of $P$ under the action of a different element $g$ of $\Gamma$. For each copy $gP$ of $P$ and each reflection $r\in \Gamma$, we glue each edge $E$ of $gP$ to the edge $rE$ of $rgP$. When $\Gamma$ is finite, the result is a compact Riemann surface $S$.

The quadratic differential $(dz)^2$ in each copy $gP$ induces a quadratic differential $Q$ on $S$. It is easy to verify that the singularities of $Q$ are located at the vertices of the different copies of $P$. Thus, rational billiards can be seen as a (proper) subfamily of the moduli space of quadratic differentials.

The application of these ideas is well illustrated by the work of S. Kerckhoff, H. Masur, and J. Smillie \cite{kerckhoffmasursmillie}, in which they prove the ergodicity of rational billiard flows through the examination of the situation in the associated surface.

\paragraph{Interval exchange transformations.}

Let $X$ be an interval in the real line, and decompose it as a disjoint union of subintervals $X=X_1\sqcup X_2\sqcup\cdots\sqcup X_n$, where the indices indicate their actual order within $X$. Let $s\in S(n)$ be a permutation. The map $T:X\to X$ that scrambles the above decomposition according to $s$, $X=X_{s(1)}\sqcup X_{s(2)} \sqcup \cdots\sqcup X_{s(n)}$, is an \emph{interval exchange transformation}.

It turns out that all such objects in fact arise from picking a surface $S$, a quadratic differential $Q$ on $S$, a straight segment $X$ inside $S$ that does not intersect the singularities of $Q$, and studying the geodesic flow in the direction perpendicular to $X$; the first-return map $T:X\to X$ is an interval exchange transformation. The extremes of the intervals in this case are determined by the orbits that meet a singularity of $Q$ and hence do not return to $X$.

This connection with the theory of quadratic differentials is fruitful. A consequence of the work of S. Kerckhoff, H. Masur, and J. Smillie \cite {kerckhoffmasursmillie}, is that the interval exchange transformations are ergodic \cite{zorich}. 
\subsection{Moduli spaces}\label{moduli}
In this section we define a space that parameterizes all quadratic differentials on a surface, and we explain some of its properties.

\paragraph{Equivalence relation.}

We say that two quadratic differentials $Q_1$ and $Q_2$ defined on Riemann surfaces $S_1$ and $S_2$, respectively, are \emph{equivalent} if there is a homomorphism $\phi:S_1\to S_2$ sending the singular points of $Q_1$ to the singular points of $Q_2$ of the same order, and having the same transition functions in neighborhoods of all other points. That is, if around the point $p\in S_1$ we have a local coordinate $w$ and $Q_1=f_1(w)(dw)^2$, and if around the point $\phi(p)\in S_2$ we have a local coordinate $z$ and $Q_2=f_2(z)(dz)^2$, then $f_1(w(q))=f_2(z(\phi(q)))$ for every $q\in S_2$ near the point $\phi(p)$. This implies that $\phi$ is a diffeomorphism on the complement of the singular points of $Q_1$, and also that $S_1$ and $S_2$ are surfaces of the same genus.

\paragraph{Definition of the moduli space and its strata.} The \emph{moduli space of quadratic differentials} $\moduliquadratic$ is the set of all equivalence classes of pairs $(S,Q)$ consisting of a Riemann surface $S$ of genus $g$ and a quadratic differential $Q$ defined on it.

 The space $\moduliquadratic$ is naturally stratified by the degree of the quadratic differentials $Q$ at their of singular points. Let $k=(k_1, k_2, \dots, k_\ell)$ be the orders of the zeroes ($k_i>0$) and poles ($k_i=-1$) of the quadratic differential. We allow also $k_i=0$, which stand for marked points.
 We will denote by $\mathcal Q(k_1,k_2,\dots,k_\ell)$ the stratum of $\moduliquadratic$ corresponding to quadratic differentials $Q$ with (ordered) singular points $p_1,p_2,\dots,p_\ell$ of degrees $k_1,\dots, k_\ell$.

We say that a stratum is \emph{self-resolvent} if for every $(S,Q)$ in the stratum the degree-two cover $\widetilde S$ of $S$ constructed in Section \ref{quadraticdifferentials} is simply a disjoint union of two copies of $S$. Equivalently, this means that the square-root of the quadratic differential is defined globally on $S$. If this happens for one pair $(S,Q)$, then it happens for all points in the same stratum. For example, taking square-roots, we see that there is a one-to-one correspondence between the quadratic differentials represented in the strata $\mathcal Q(2k_1,2k_2,\dots,2k_\ell)$, with $\sum_i k_i=2g-2$ and $k_i\geq0$ for $i=1,\dots,\ell$, and the Abelian differentials with singularities of type $k_1,k_2,\dots,k_\ell$, so this stratum is self-resolvent.

\paragraph{Dimension and local coordinates.}

 H. Masur \cite{masur} and W. A. Veech \cite{veech}
 established the fact that the spaces $\mathcal Q(k_1, \dots , k_\ell)$ are complex orbifolds of complex dimension $2g-1+\ell$ if they are self-resolvent, and $2g-2+\ell$ otherwise.

Local coordinates on $\mathcal Q(k_1,\dots,k_\ell)$ are given as follows \cite{kontsevichlyapunovexponents,lanneauhyperelliptic}. Let  $(S,Q)\in\mathcal Q(k_1,\dots,k_\ell)$, and let $\widetilde S$ be the degree-two covering that resolves the square root of $Q$ as explained in Section \ref{quadraticdifferentials}. Denote by $\pi$ the covering map $\pi:\widetilde S\to S$ and by $\omega$ the square root of the pullback $\pi^*Q$ of the quadratic form $Q$. Let $P$ be the inverse image under $\pi$ of the set of singular points of $Q$. There is an involution $\sigma:\widetilde S\to\widetilde S$ that interchanges the fibers of $\pi$. Then $\sigma^*\omega=-\omega$, and the fixed points of $\sigma$ are a subset of $P$.
Since $\omega$ can be written as a closed form $dz$ on $\widetilde S-P$ (see Section \ref{quadraticdifferentials}), it defines an element of the relative cohomology group $H^1(S;P)$. The involution $\sigma$ induces an involution
\[\sigma^*:H^1(\widetilde S;P)\to H^1(\widetilde S;P),\]
which splits the vector space $H^1(S;P)$ into a direct sum of two eigenspaces:
\[H^1(\widetilde S;P)=V_1\oplus V_{-1}.\]
The class $[\omega]$ of the form $\omega$ belongs to the space of anti-invariant forms $V_{-1}$. A small neighborhood of $[\omega]$ inside $V_{-1}$ gives a local coordinate chart around $(S,Q)$.

Alternatively, one can define the \emph{period map} $\Phi$ that also gives a local coordinate \cite{masurzorichmultiplesaddle}. We will denote by $p_i$ the elements of the singular set $P=\{p_1,p_2\dots,p_\ell\}$ of $Q$. Let $\gamma_1,\dots,\gamma_{2g}$ be a standard symplectic basis of $H_1(\widetilde S)$, and complete it to a basis of $H_1(\widetilde S;P)$ by adding paths $\gamma_{2g+1},\dots,\gamma_{2g+\ell-1}$ whose boundary is in $P$ and more specifically:
\begin{equation*}
\partial \gamma_{2g+i}=p_{i+1}-p_1,\quad i=1,2,\dots,\ell-1.
\end{equation*}
We record the \emph{full period map} $\widetilde \Phi$ that will be used later, and is defined by
\begin{equation}\label{fullperiodmap}
\widetilde \Phi(S,Q)= \left(\int_{\gamma_{1}}\omega,\int_{\gamma_{2}}\omega,\dots, \int_{\gamma_{2g+\ell-1}} \omega\right).
\end{equation}
The map $\widetilde \Phi$ is locally injective but is not a local surjection in general; it maps to a space of very high dimension. To correct this, let $U_{-1}$ be the subspace of $H_1(\widetilde S;P)$ on which $\sigma$ acts as multiplication by $-1$. Choose cycles $c_1,c_2,\dots$ that form a basis of $U_1$. Then we let
\begin{equation}\label{periodmap}
\Phi(S,Q)=\left(\int_{c_1}\omega,\int_{c_2}\omega,\dots, \int_{c_{\dim \mathcal Q(k)}} \omega\right).
\end{equation}
This is the period map.

\paragraph{The area-2 slice.}

By the Riemann bilinear relations, in the symplectic basis $\gamma_1$, \dots, $\gamma_{2g}$ for the homology group $H_1(\widetilde S)$, the area of $S$ induced by $Q$, which is half the area of $\widetilde S$ induced by $\omega$,  is given by
\begin{equation}\label{areaofasurface}
\mathrm{area}(S)=\frac12\int_{\widetilde S} |\omega|^2\,dx\,dy
=\frac i4\int_{\widetilde S} \omega\wedge\bar\omega
=\frac i4 \sum_j \left(\int_{\gamma_j}\omega\int_{\gamma_{j+g}}\bar\omega -\int_{\gamma_j}\bar\omega\int_{\gamma_{j+g}}\omega\right)\!.
\end{equation}
Let $\mathcal Q_1(k_1,\dots,k_\ell)$ be the slice of the stratum corresponding to pairs $(S,Q)$ for which the area induced on $S$ by $Q$ equals 2. Note that the last expression in equation \eqref{areaofasurface} implies that in the coordinates given by the period map, $\mathcal Q_1(k_1,\dots,k_\ell)$ is a hyperboloid.

\paragraph{The measure.}

We want to define a measure $d\nu$ on $\mathcal Q_1(k_1,\dots,k_\ell)$. For a set $X\subset \mathbb R^n$, let
\[CX=\{tx:0\leq t\leq 1, x\in X\}\]
be the cone over $X$. Let $E\subset\mathcal Q_1(k_1,\dots,k_\ell)$ be a closed or open set entirely contained in the domain of a coordinate chart $\Phi$. Define
\[d\nu(E)=\mathrm{vol}(C\Phi(E)). \]
This defines the measure $d\nu$. It was proven by H. Masur \cite{masur} and W. Veech \cite{veechgauss} that the resulting volume $d\nu\left(\mathcal Q_1\left(k_1,\dots,k_\ell\right)\right)$ is finite.

\paragraph{Classification of the connected components.}

The classification of the connected components of the strata $\mathcal Q(k_1, \dots , k_\ell)$ was given by E. Lanneau \cite{lanneau}, building on the classification of the connected components of the strata of the moduli spaces of Abelian differentials, given by Kontsevich and Zorich \cite{kontsevichzorich}. We summarize the results. Let $g$ be the genus of the surfaces in the stratum $\mathcal Q(k_1, \dots , k_\ell)$ in question, and let $k=(k_1,\dots,k_\ell)$ be the list of integers $-1\leq k_i \neq 0$  that gives their singularity data, satisfying $\sum_i k_i=4g-4$.
The \emph{hyperelliptic} connected components are the ones that correspond to an entire stratum (with different singularity data $k$); we will list the correspondence below.
We have:
 \begin{itemize}
 \item If the quadratic differentials in the  stratum are self-resolvent \cite{kontsevichzorich}:
 \begin{itemize}
 \item If $g=1$, there is only one connected component.
 \item If $g=2$, the stratum has only one connected component, which is hyperelliptic.
 \item If $g=3$:
        \begin{itemize}
        \item If $k=(4,4)$ or $k=(8)$, the stratum has two connected components.
        \item All other strata are connected.
        \end{itemize}
  \item If $g\geq 4$:
     \begin{itemize}
     \item If $k=(4g-4)$ or $k=(4l,4l)$ for $l\geq2$, the stratum has three connected components: a hyperelliptic one, and two others corresponding to odd and even spin structures.
    \item If $k=(4l-2,4l-2)$ for $l\geq 2$, the stratum has two connected components: a hyperelliptic one, and a non-hyperelliptic one.
    \item All other strata are connected.
    \end{itemize}
    \end{itemize}
 \item If the stratum is not self-resolvent \cite{lanneau}, all strata that have two or more connected components have exactly one hyperelliptic component. The picture is the following:
   \begin{itemize}
   \item If $g=0$, all strata are connected.
   \item If $g=1$, all strata are connected, but the strata corresponding to $k=\emptyset$ and $k=(1,-1)$ are empty.
   \item If $g=2$, and if $k=(-1,-1,6)$ or $k=(-1,-1,3,3)$, then the stratum has two connected components. All other strata with $g=2$ are connected.
   \item If $g=3$, and if $k=(-1,9)$, $k=(-1,3,6)$, or $k=(-1,3,3,3)$, the stratum has two connected components. All other strata with $g=3$ are connected.
   \item If $g=4$, and if $k=(12)$, the stratum has exactly 2 connected components. All other strata with $g=4$ are connected.
    \item If $g\geq 5$:
    \begin{itemize}
    \item In  the following cases there are exactly two connected components:
        \begin{itemize}
        \item $k=(4(g-l)-6,4l+2)$, $0\leq l\leq g-2$,
        \item $k=(4(g-l)-6,2l+1,2l+1)$, $0\leq l \leq g-1$, and
        \item $k=(2(g-l)-3,2(g-l)-3,2l+1,2l+1)$, $0\leq l \leq g-2$.
        \end{itemize}
    \item All other strata are connected.
        \end{itemize}
    \end{itemize}

 \end{itemize}

The hyperelliptic components are (by definition \cite{lanneauhyperelliptic}) precisely the ones corresponding to the \emph{images} of the following injective maps:
\begin{enumerate}
\item $\mathcal Q(2(i-j)-3,2j+1,-1^{2i+2})\to\mathcal Q(2(i-j)-3,2(i-j)-3,2i+1,2i+1)$, where $j\geq-1$, $i\geq 1$, $i-j\geq 2$,
\item $\mathcal Q(2(i-j)-3,2j,-1^{2i+1})\to\mathcal Q(2(i-j)-3,2(i-j)-3,4j+2)$, where $j\geq0$, $i\geq 1$, $i-j\geq1$,
\item $\mathcal Q(2(i-j)-4,2j,-1^{2i})\to\mathcal Q(4(i-j)-6,4j+2)$, where $j\geq0$, $i\geq2$, and $i-j\geq 2$.
\end{enumerate}

\paragraph{Action of $SL_2(\R)$.}

As remarked by  H. Masur \cite{masur} and W. A. Veech \cite{veech}, there is an action of $SL(2,\mathbb R)$ on any given quadratic differential $Q$ on a surface $S$, given by
\[A\cdot Q_p(v)=Q_p(Av),\qquad A\in SL(2,\R), v\in T_pS,p\in S.\]
This preserves the singular points of $Q$ and induces an action of $SL(2,\mathbb R)$ on the slice $\mathcal Q_1(k_1,\dots,k_\ell)$, given by
\[A\cdot(S,Q)=(S,A\cdot Q).\]
The action of the one-parameter subgroup of $SL(2,\mathbb R)$ whose elements are of the form
\[\begin{pmatrix}e^t & 0 \\ 0 & e^{-t} \end{pmatrix}\]
turns out to be \emph{ergodic} in each connected component of $\mathcal Q_1(k_1,\dots,k_\ell)$. This means that it preserves the measure defined above, and that invariant sets that are Lebesgue measurable are of either null or full measure.

\subsection{Counting quadratic differentials} \label{howtocomputethevolume}
We now describe the approach of A. Eskin and A. Okounkov \cite{pillow} to finding the volume $d\nu(\mathcal Q_1(k_1,\dots,k_\ell))$.

\paragraph{Description of the method.}

In Section \ref{specialfamily}, we will define a regular lattice $\mathcal F_{\nu,\mu,p}$ inside $\mathcal Q(k_1,\dots,k_\ell)$. Since the stratum is closed under multiplication by complex scalars, the situation then becomes very similar to the following: Imagine having an open cone $K\subset \R^n$. If we were able to count the number of points $K\cap\Z^n\cap \{|x|<D\}$ for each $D>0$,  then the volume of $K\cap\{|x|<1\}$ would simply equal
\[\lim_{D\to+\infty} D^{-n}\left|K\cap\Z^n\cap\left\{|x|<D\right\}\right|.\]

The objects forming the lattice $\mathcal F_{\nu,\mu,p}$ will turn out to be certain ramified covers of the sphere $\sphere$. Let us the denote by $\mathcal F_d\subset \mathcal F_{\nu,\mu,p}$ the covers of degree $d$.
By Lemma \ref{latticelem}, the area induced by the quadratic form corresponding to each element of $\mathcal F_d$ is equal to $2d$. The area induced by the quadratic differential whose coordinates are $\sqrt D \Phi(S,Q)$ is $D$ times the area of $(S,Q)$. Thus, when identified as a lattice inside $\mathcal Q(k_1,\dots,k_\ell)$, we have
\[
\bigcup_{d\leq D} \mathcal F_d = \mathcal F_{\nu,\mu,p}\cap C_{\sqrt D}\mathcal Q_1(k_1,\dots,k_\ell),
\]
where $C_uX=\{(S,tQ):0\leq t\leq u,(S,Q)\in X\}$, $u>0$.
Their count, as we will see in Section \ref{generatingfunctionmanipulations}, is encoded in the following generating function:
\begin{align}
Z(\mu,\nu;q)&=\sum_{d=0}^\infty q^d\left|\mathcal F_d\right|\notag
\\
&=\sum_\lambda q^{|\lambda|/2} \left(\frac{\dim\lambda}{|\lambda|!}\right)^2
\mathbf f_{(\nu,2,2,\dots)}(\lambda)\mathbf f_{(2,2,\dots)}(\lambda)^3
\label{generatingfunction}
\end{align}
The sum is over all partitions $\lambda$.
In Section \ref{asymptotics}, we will explain that this generating function is, in fact, a polynomial of quasimodular forms.
Whence the limit
\begin{equation}\label{volumelimit}
d\nu(\mathcal Q_1(k_1,\dots,k_\ell))=\lim_{d\to\infty} d^{-\dim_\C\mathcal Q(k_1,\dots,k_\ell)}|\mathcal F_d|
\end{equation}
is amenable to computation. One does this by looking at the $q\to 1$ (from below) asymptotics of the generating function \eqref{generatingfunction}; this amounts to letting $d\to\infty$. 
In Section \ref{asymptotics}, we will explain the approach proposed in \cite{pillow} for the understanding of these asymptotics.

\paragraph{Technical remarks.}

Some additional comments are necessary in order to make the above argument rigorous. First, the boundary of $\mathcal Q_1(k_1,\dots,k_\ell)$ is rectifiable; this is easy to show. Second, in order to prove equation \eqref{volumelimit}, it is better to work with the image of $\Phi$. One should observe that it follows from the proof of the finiteness of the volume in \cite{masur,veechgauss} that for every $\varepsilon>0$ there is some compact subset $K_\varepsilon\subseteq \mathcal Q_1(k_1,\dots,k_\ell)$ such that $\nu(K_\varepsilon)\geq \nu\left(\mathcal Q_1(k_1,\dots,k_\ell)\right)-\varepsilon$. It is for cones over these sets $K_\varepsilon$ that the above should be done, if one is to do it carefully, and this is not difficult. This concludes the justification of the equality \eqref{volumelimit}.

Finally, we remark that this method does not distinguish between the different connected components of the strata, and applications in dynamics do require this distinction. However, a quick examination of the classification given in Section \ref{moduli} reveals that it is enough to know the volumes of the hyperelliptic components, except for finitely many sporadic cases, whose volumes can be computed using, for example, the method devised by M. Kontsevich and A. Zorich \cite{kontsevichzorich}, or the one devised for the case of Abelian differentials by A. Eskin and A. Okounkov \cite{branchedcoverings}, which is analogous to what was described in this section.

\subsection{The pillowcase orbifold} \label{pillowcasesection}
\paragraph{Definition and description.}

Let $L$ be a lattice in the complex plane $\C$, $L\cong\Z^2$, and let $\mathbb T^2=\C/L$ be the associated complex torus. The \emph{pillowcase orbifold} $\mathfrak P$ is the space obtained by taking the quotient space of $\mathbb T^2=\C/L$ by the action of the automorphism $z\mapsto -z$.

Although the underlying topological space does not depend on $L$, the complex structure of the pillowcase orbifold does. For our purposes, however, any lattice that induces area 2 on the orbifold will do. For simplicity we will assume from here on that $L=2\Z+2i\Z$.

The intuitive picture is exactly the one that its name suggests: the pillowcase orbifold can be obtained by superimposing two identical squares of side 1 and gluing their sides. More precisely, one can take the fundamental domain for the action of $z\mapsto -z$ given by the rectangle between the points $0$, $1$, $2i$, and $2i+1$. Here is a picture.

\begin{center}
\includegraphics{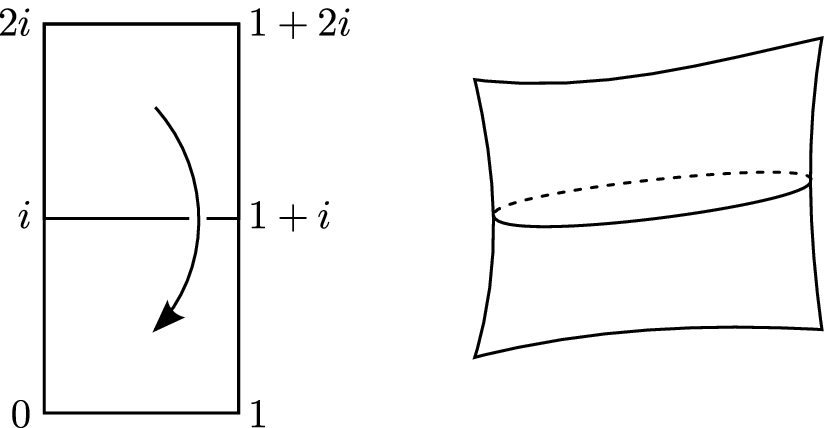}
\end{center}

The action of $z\mapsto -z$ will identify the boundary segments as follows:
\[[0,1]\leftrightarrow[2i,1+2i], \qquad [0,i]\leftrightarrow[2i,i],\qquad [1,1+i]\leftrightarrow[1+2i,1+i].\]
In this notation, $[a,b]$ denotes the oriented line segment joining $a$ with $b$. This fundamental domain is formed by two squares of side one, divided by the segment $[i,i+1]$, which we can picture as the locus where we should fold to obtain $\mathfrak P$.
There are four conic points on the resulting orbifold, one corresponding to each corner $0$, $1$, $i$, and $i+1$. The angle around each conic point is $\pi$.

\paragraph{Covers and global square-roots.}

Now consider the covers $\mathcal C$ of $\mathfrak P$ ramified only at the conic points of $\mathfrak P$. One can picture these covers as surfaces built by gluing squares of side one.

The quadratic differential $(dz)^2$ that the torus $\mathbb T^2$ inherits from $\C$ descends to $\mathfrak P$, except at the conic points. It can also be pulled back to the cover $\mathcal C$ by means of the cover map $\pi:\mathcal C\to\mathfrak P$. On $\mathfrak P$, $(dz)^2$ has four simple poles located at the conic points.

Locally on $\mathcal C$, $\pi^*(dz)^2$ has two square roots given by the Abelian differentials $\pi^*dz$ and $-\pi^*dz$, the pullbacks of the locally-defined square roots of $(dz)^2$ on $\mathfrak P$.

In $\mathfrak P$ it is impossible to find a global (i.e., defined at all points except the conic ones) square root of $(dz)^2$. To see why, note that taking the fundamental domain described above, it is clear that $dz$ is a well-defined square root of $(dz)^2$ within the interior of the two squares of side 1 that constitute the pillowcase. However, as soon as one tries to go from the square $(0,1)\times(0,i)$ to the square $(0,1)\times(i,2i)$ by crossing the side segment $[0,i]$, it becomes clear that the correct continuation for $dz$ would be $-dz$ on the second square. However, this contradicts the fact that the right continuation for $dz$ when crossing the segment $[0,1]$ is $dz$ itself. Using the procedure described in Section \ref{moduli}, we find again the degree-two cover $\pi_{\mathbb T^2}:\mathbb T^2\to \mathfrak P$ where $dz$ is well-defined globally.

\subsection{A special family of covers of the pillowcase} \label{specialfamily}
The pillowcase orbifold $\mathfrak P$, with resolvent cover $\mathbb T^2$, is homeomorphic to the sphere $\sphere$. Viewed as such, the map $\pi_{\mathbb T^2}:\mathbb T^2\to \mathfrak P$ has ramification profile $(2)$ at each of the four conic points. (Ramification profiles are explained in Section \ref{ramifiedcovers}.)

In a similar vein, we will be interested in certain covers $\pi:S\to\mathfrak P$. We fix a partition $\nu$ of an \emph{even} number into \emph{odd} parts, a partition $\mu$, and $\ell(\mu)$ points $p_1,\dots ,p_{\ell(\mu)}$ in $\mathfrak P$. We require the cover to have profile $(\nu, 2^{d-|\nu|/2})$ over $0\in\mathfrak P$, and profile $(2^{d})$ over the other three conic points of $\mathfrak P$. Additionally, we require each point $p_i$ to have profile $(\mu_i,1^{d-\mu_i})$. Denote by $\mathcal F_{\nu,\mu,p}$ the (discrete) family of covers $\pi:S\to \mathfrak P$ with these properties.

The quadratic differential $\pi^*(dz)^2$ on $S$ has zeros of order $\nu_i-2$ on each one of the preimages of $0$, and $2\mu_i-2$ on one of the preimages of each $p_i$. As in the case of $\mathbb T^2$, a ramification profile of $2$ over a conic point of $\mathfrak P$ produces no zeros on the quadratic differential defined on the cover. In other words, $\mathcal F_{\nu,\mu,p}\subseteq \mathcal Q(k)$ with $k=(\nu_i-2,2\mu_i-2)$, so the choices of $\nu$ and $\mu$ are obvious, given $k$: $\nu$ corresponds to the odd entries of $k$, while $\mu$ corresponds to the even ones. Simple poles correspond to $\nu_i=1$.

We remark that specializing the Riemann-Hurwitz formula \eqref{riemannhurwitz} to the case of $\pi:S\to\mathfrak P$ in  $\mathcal F_{\nu,\mu,p}$, since $\chi(\mathfrak P)=0$, we get
\[\chi(S)= \ell(\mu)+\ell(\nu)-|\mu|-|\nu|/2.\]
Note that this does not depend on $d$, and completely determines the genus $g$ of $S$ through the relation $\chi(S)=2-2g$.

As explained in Section \ref{howtocomputethevolume}, the following lemma is crucial.

\begin{lem}\label{latticeprop}
The family $\mathcal F_{\nu,\mu,p}$ forms a regular lattice within the stratum $\mathcal Q(k)$ where $k=\{\nu_i-2,2\mu_i-2\}$.
\end{lem}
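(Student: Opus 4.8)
The plan is to show that the covers in $\mathcal F_{\nu,\mu,p}$ sit inside $\mathcal Q(k)$ in a way that is both \emph{discrete} and \emph{full-dimensional} in the appropriate local coordinates, which is exactly what ``regular lattice'' should mean here. The natural coordinate system to use is the period map $\Phi$ from equation \eqref{periodmap}: a cover $\pi:S\to\mathfrak P$ in $\mathcal F_{\nu,\mu,p}$ is built by gluing unit squares whose edges are translates of the standard horizontal and vertical segments in $\C$, so every relative period $\int_{c_j}\omega$ of the resolving Abelian differential $\omega=\sqrt{\pi^*(dz)^2}$ is a Gaussian integer, i.e.\ an element of $\Z+i\Z$ (here one uses that $L=2\Z+2i\Z$ and that $\omega$ is the pullback of the locally-defined $dz$ upstairs on $\mathbb T^2$). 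Thus $\Phi(\mathcal F_{\nu,\mu,p})\subseteq (\Z+i\Z)^{\dim_\C\mathcal Q(k)}$, which immediately gives discreteness.

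The substantive half of the argument is the converse: I would show that a full-rank sublattice of $(\Z+i\Z)^{\dim_\C\mathcal Q(k)}$ is actually realized, i.e.\ the image is not contained in any proper subvariety. Concretely, for a generic choice of the points $p_1,\dots,p_{\ell(\mu)}$ one should check that every integral period vector satisfying the (linear) closing-up conditions forced by the ramification data actually comes from a genuine cover of $\mathfrak P$ with the prescribed profiles. The cleanest way is via the classical dictionary between ramified covers of $\mathfrak P$ (equivalently, of $\sphere$ with four orbifold points) and tuples of permutations with the prescribed cycle types whose product is the identity, modulo conjugation --- this is the combinatorial model underlying the generating function \eqref{generatingfunction}. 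One identifies a square-tiled representative for each such permutation datum, reads off its period vector, and checks that as the gluing data ranges over all admissible configurations one sweeps out a finite-index subgroup of the Gaussian-integer lattice in $V_{-1}$; the area constraint from Lemma \ref{latticelem} (area $=2d$ for a degree-$d$ cover) then matches the hyperboloid slices correctly, confirming that $\bigcup_{d\le D}\mathcal F_d$ fills out the cone $C_{\sqrt D}\mathcal Q_1(k)$ up to the lattice.

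The main obstacle I anticipate is precisely verifying that the lattice generated has \emph{full rank} equal to $\dim_\C\mathcal Q(k)$, rather than being confined to a proper rational subspace: one must argue that the relative periods (the ``$\gamma_{2g+i}$'' directions, controlled by the positions of the $p_i$) can be varied independently of the absolute periods, and that the anti-invariant eigenspace condition $[\omega]\in V_{-1}$ does not accidentally impose extra integrality obstructions beyond those already accounted for. This is where the genericity of $p_1,\dots,p_{\ell(\mu)}$ is essential and where a short degeneration or deformation argument --- sliding one ramification point along a horizontal or vertical leaf of the flat structure to change a single period by a controlled integer amount --- will be needed to exhibit enough independent lattice vectors. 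Once full rank is established, the ``regular lattice'' conclusion, and in particular the validity of the volume limit \eqref{volumelimit}, follows from the elementary cone-counting principle described in Section \ref{howtocomputethevolume}.
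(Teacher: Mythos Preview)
Your discreteness half is fine and is essentially what the paper does: periods of the pulled-back square root lie in a fixed lattice because the cover is tiled by unit squares coming from $\mathbb T^2=\C/(2\Z+2i\Z)$. (Minor point: with that normalization the absolute periods land in $2\Z+2i\Z$, not $\Z+i\Z$.)

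The gap is in the surjectivity direction, and your proposed fix --- sweeping through permutation data, or sliding a ramification point to change a single period --- is both vague and unnecessary. You cite Lemma~\ref{latticelem} only for the area identity, but its real content is an \emph{if and only if}: a point $(S,Q)$ has its first $2g$ full-period coordinates in $2\Z+2i\Z$ \emph{exactly when} $\widetilde S$ admits a ramified cover $\tau:\widetilde S\to\mathbb T^2$ with $\omega=\tau^*dz$ and the prescribed local ramification orders. That equivalence is what hands you fullness for free in the absolute-period directions: every lattice point in $P_{2g}\widetilde\Phi(U)$ is realized by an honest cover, no deformation argument required. For the remaining $\ell-1$ relative-period coordinates the lattice structure comes from item~(e) of the same lemma: the images $\tau(q_i)$ are pinned to the prescribed points $p_1,\dots,p_{\ell(\mu)}$ and the corners of $\mathfrak P$, so those coordinates are determined modulo $(2\Z)^2$. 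No genericity of the $p_i$ is invoked or needed --- they are fixed throughout.

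Finally, the paper works with the full period map $\widetilde\Phi$ rather than $\Phi$ directly, and only at the end passes to $\Phi$ via an integral change-of-basis matrix $A$ relating the cycles $\gamma_j$ to the anti-invariant basis $c_i$. This is what transports the lattice from $\widetilde\Phi(U)$ to $\Phi(U)$; working with $\Phi$ from the start, as you do, obscures why the anti-invariance constraint does not impose extra conditions --- a worry you yourself flag but do not resolve.
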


To prove the lemma we will first need another lemma. Given a point $(S,Q)$ in the stratum $\mathcal Q(k)$, denote by $\widetilde S$  the unique degree-two cover $\pi:\widetilde S\to S$ of $S$ on which there is a form $\omega$ that is a globally defined square root of the pullback of $Q$ to $\widetilde S$ (see Section \ref{quadraticdifferentials}), and by $\mathbb T^2=\R^2/(2\Z)^2$ the standard torus of area 4.
Recall the period map $\Phi$  and the full period map $\widetilde \Phi$ were defined in equations \eqref{periodmap} and \eqref{fullperiodmap}, respectively.

\begin{lem}[Analogous to \protect{\cite[Lemma 3.1]{branchedcoverings}}]
\label{latticelem}
Let $S$ be a surface of genus $g$.
The first $2g$ coordinates $\left(\widetilde \Phi (S,Q)\right)_i$, $i=1,2,\dots,2g$, of the image of a point $(S,Q)\in \mathcal Q(k)$ under the full period map $\widetilde \Phi$ are in the lattice $2\Z+2i\Z$ of complex points with even real and imaginary parts if, and only if, the following holds:
\begin{enumerate}[$(a)$]
  \item There exists a holomorphic map $\tau:\widetilde S\to \mathbb T^2$ that makes $\widetilde S$ into a ramified cover of $\mathbb T^2$.
  \item $\omega=\tau^*dz$, for $\omega$ as above.
  \item The ramification points $q_i\in\widetilde S$ of $\tau$ are a subset of $\pi^{-1}(\textrm{conic points of $Q$})$, and they coincide with the set of zeros of $\omega$.
  \item The ramification of $\tau$ around $q_i$ is locally of the form $z\mapsto z^{k_i+1}$.
  \item $\tau(q_{i+1})-\tau(q_1)=\left(\widetilde\Phi(S,Q)\right)_{2g+i}\mod (2\Z)^2$.
  \item The degree of $\tau$ is equal to a quarter of the area of $S$, as defined by equation \eqref{areaofasurface}.
\end{enumerate}
\end{lem}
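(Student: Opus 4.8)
The statement is an ``if and only if'' characterizing when a quadratic differential arises (via its resolving double cover) as a pullback of $(dz)^2$ from the standard torus by a ramified cover. The plan is to treat the two directions separately, with the bulk of the work in the ``only if'' direction, where one must \emph{construct} the map $\tau$ out of period data.

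\textit{The ``if'' direction.} Suppose $(a)$--$(f)$ hold. Given the cover $\tau\colon\widetilde S\to\mathbb T^2$ with $\omega=\tau^*dz$, the first $2g$ entries of $\widetilde\Phi(S,Q)$ are the integrals $\int_{\gamma_j}\tau^*dz=\int_{\tau_*\gamma_j}dz$. Since $\tau_*\gamma_j$ is an integral $1$-cycle on $\mathbb T^2=\R^2/(2\Z)^2$, its integral against $dz$ lies in the period lattice $2\Z+2i\Z$. This is essentially immediate and needs only the observation that $\tau_*$ sends $H_1(\widetilde S;\Z)$ into $H_1(\mathbb T^2;\Z)$.

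\textit{The ``only if'' direction.} Assume the first $2g$ coordinates of $\widetilde\Phi(S,Q)$ lie in $2\Z+2i\Z$. The key idea is to define $\tau\colon\widetilde S\to\mathbb T^2$ by integrating $\omega$: fix a basepoint $q_0\in\widetilde S$ away from the zeros of $\omega$, and set $\tau(x)=\int_{q_0}^x\omega \pmod{2\Z+2i\Z}$. The point of the hypothesis is exactly to make this well-defined: any two paths from $q_0$ to $x$ differ by a closed cycle, which decomposes in $H_1(\widetilde S)$ in the symplectic basis $\gamma_1,\dots,\gamma_{2g}$ (the relative classes contribute nothing since $\omega$ is honestly closed on $\widetilde S$ minus its zeros, and near a zero the periods around small loops vanish), so the ambiguity in $\int\omega$ lies in $2\Z+2i\Z$ and disappears mod the lattice. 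Then one checks: $\tau$ is holomorphic because $\omega$ is; $\tau^*dz=\omega$ by construction, giving $(b)$; near a zero of $\omega$ of order $k_i$, in a local coordinate where $\omega=z^{k_i}\,dz$ one integrates to get $\tau\sim z^{k_i+1}/(k_i+1)$, so $\tau$ is a $(k_i+1)$-fold branched cover there, giving $(c)$ and $(d)$ (and the zeros of $\omega$ lie over conic points of $Q$ since those are the only places $\pi^*Q$, hence $\omega$, can vanish); the degree statement $(f)$ follows by comparing areas --- $\deg\tau\cdot\operatorname{area}(\mathbb T^2)=\int_{\widetilde S}|\omega|^2=2\operatorname{area}(S)$ with $\operatorname{area}(\mathbb T^2)=4$ giving $\deg\tau=\operatorname{area}(S)/2=\tfrac14\operatorname{area}(\widetilde S)$; and $(e)$ holds because $\tau(q_{i+1})-\tau(q_1)=\int_{q_1}^{q_{i+1}}\omega$ along any path, and such a path represents the relative class $\gamma_{2g+i}$, so this integral is $(\widetilde\Phi(S,Q))_{2g+i}\pmod{(2\Z)^2}$ (here one must also confirm that the $q_i$ can be ordered to match the ordering of the $p_i$, which follows from $\pi(q_i)=p_i$).

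\textit{Main obstacle.} The technical heart is the well-definedness of $\tau$, i.e.\ verifying that \emph{every} closed loop in $\widetilde S\setminus\{\text{zeros of }\omega\}$ has $\omega$-period in $2\Z+2i\Z$, not just the chosen basis cycles $\gamma_1,\dots,\gamma_{2g}$. One must argue that $H_1(\widetilde S\setminus\{\text{zeros}\})$ is generated by the $\gamma_j$ together with small loops around the zeros, and that $\omega$ has zero residue at each zero (it is holomorphic there, not merely meromorphic), so those small loops contribute nothing; hence the period homomorphism $H_1\to\C$ factors through $H_1(\widetilde S)$ and is determined by its values on the symplectic basis. A secondary point requiring care is matching the labeling of the branch points $q_i$ with the labeling used in $\widetilde\Phi$ so that $(e)$ reads correctly; this is bookkeeping, handled by choosing the path from $q_1$ to $q_{i+1}$ in the class $\gamma_{2g+i}$. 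Everything else is local normal-form computation and the area identity, both routine.
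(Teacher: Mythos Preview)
Your proof is correct and follows essentially the same approach as the paper: for the ``only if'' direction you define $\tau$ by integrating $\omega$ modulo the lattice $2\Z+2i\Z$ and argue well-definedness from the period hypothesis, exactly as the paper does (the paper dismisses sufficiency as ``clear'' and gives only the well-definedness argument, leaving the verification of $(a)$--$(f)$ implicit). Your version is simply more detailed, supplying the local normal-form computation at the zeros, the area/degree identity, and the check of $(e)$ that the paper omits.
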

\begin{proof}[Proof of Lemma \ref{latticelem}]
Sufficiency is clear; to prove necessity, define $\tau$ by
\[\tau(z)=\int_p^z \omega \mod 2\Z+2i\Z,\]
where $p\in \widetilde S$ is an arbitrary point. The integral is taken over any path joining $p$ and $z\in \widetilde S$. By Cauchy's theorem, the result depends only on the homotopy class of  of the chosen path. But since $\left(\widetilde \Phi(S,Q)\right)_j\in 2\Z+2i\Z$, $j=1,2,\dots,2g$, the integral of $\omega$ along any closed path in $S$ results in a complex number whose coordinates are even integers, so $\tau$ is independent of the chosen path.
\end{proof}

\begin{proof}[Proof of Lemma \ref{latticeprop}]
 We want to relate the images of $\Phi$ and $\widetilde \Phi$. Let $U$ be an open set entirely contained in a chart of $\mathcal Q(k)$, that is, such that $\Phi$ is injective in $U$, and assume $U$ is closed under the action of $\C$. It is clear that $\widetilde \Phi$ is injective on $U$ as well.

For each cover $S\to\mathfrak P$ in $\mathcal F_{\nu,\mu,p}$, the double-covering $\widetilde S$ satisfies the hypotheses of Lemma \ref{latticelem}, so within $\widetilde \Phi(U)$, we have
\begin{equation}\label{familyinsidelattice}
\widetilde \Phi(U\cap \mathcal F_{\nu,\mu,p})\subseteq\widetilde \Phi(U)\cap \left( (2\Z+2i\Z)^{2g}\times \C^{\ell-1}\right).
\end{equation}
Moreover,
if we let $P_{2g}$ be the projection of $\C^{2g+\ell-1}$ onto its first $2g$ coordinates, then from the definition of $\mathcal F_{\nu,\mu,p}$ it is clear that \[P_{2g}\Phi\left(U\cap\mathcal F_{\nu,\mu,p}\right)=P_{2g}\Phi(U)\cap(2\Z+2i\Z)^{2g}.\]
Also, the requirement that the ramifications be above the prescribed points of $\mathbb T^2$ ensures that $\widetilde\Phi\left(U\cap \mathcal F_{\nu,\mu,p}\right)$ is also structured as a lattice in the remaining $\ell-1$ complex dimensions.

Going from the images $\widetilde \Phi(S,Q)$ to the corresponding points $\Phi(S,Q)$ simply entails finding a matrix $A=(a_{ij})$ with integral entries such that
\[c_i=\sum_j a_{ij}\gamma_j,\]
where $c_i$ and $\gamma_j$ are as in equations \eqref{periodmap} and \eqref{fullperiodmap}, respectively. Such a matrix can be found in which the $(a_{ij})$ are constant throughout $\mathcal Q(k)$ (simply fix in the underlying topological space of the surface $S$ which is the same throughout all points $(S,Q)\in\mathcal Q(k)$, the generators of the homology groups involved in the definition of $\Phi$ and $\widetilde\Phi$).
Hence there is a linear mapping taking $\widetilde \Phi(U)$ to $\Phi(U)$ bijectively. Of course, $\Phi(U\cap \mathcal F_{\nu,\mu,p})$ is just the image of the lattice $\widetilde\Phi\left(U\cap\mathcal F_{\nu,\mu,p}\right)$ under the linear transformation $A$.
\end{proof}

\subsection{Counting ramified covers of the sphere} \label{generatingfunctionmanipulations}

Consider a cover of the sphere $\sphere$ of degree $d$, ramified at points $p_1,p_2,\dots,p_k\in \sphere$. For each $i$, let $\Gamma_i$ be a small non-self-intersecting loop encircling $p_i$ only, and intersecting no other loop $\Gamma_j$, $i\neq j$. Let $s_i$ be the monodromy permutation associated to the loop $\Gamma_i$ (see Section \ref{ramifiedcovers}). Since the sphere is simply connected, the sum of the cycles $\Gamma_i$ is homologically equivalent to the null cycle in $H_1(\sphere-\{p_1,\dots,p_k\})$. One deduces that the composition of monodromy permutations $s_1s_2\cdots s_k$ is in fact equal to the identity. This is why the following lemma is true.

\begin{lem}\label{coversofspherelemma}
The ramified covers of the sphere $\pi:C\to\sphere$ of degree $d$ are completely determined, up to isomorphism, by the following data:
\begin{itemize}
\item A finite set of points $p_1,p_2\dots,p_k\in\sphere$.
\item Permutations $s_1,s_2,\dots,s_k\in S(d)$ such that $s_1s_2\cdots s_k=1$.
\end{itemize}
\end{lem}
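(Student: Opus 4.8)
The plan is to reduce the statement to the classical monodromy correspondence for ordinary (unramified) covering spaces. First I would set $X=\sphere\setminus\{p_1,\dots,p_k\}$ and observe that if $\pi:C\to\sphere$ is a ramified cover whose branch locus is contained in $\{p_1,\dots,p_k\}$, then the restriction $\pi^{-1}(X)\to X$ is a genuine covering map of degree $d$. The space $X$ is connected, locally path-connected and semilocally simply connected, so the standard theory classifies its degree-$d$ covers (not necessarily connected) up to isomorphism by the conjugacy classes of the monodromy homomorphisms $\rho:\pi_1(X,x_0)\to S(d)$, where $x_0\in X$ is a basepoint, $\rho$ records the permutation of the fiber $\pi^{-1}(x_0)$ produced by lifting loops, and the $S(d)$-conjugacy accounts for relabeling the fiber.

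Next I would identify $\pi_1(X,x_0)$ explicitly in terms of the loops $\Gamma_i$ from the statement. Choosing the $\Gamma_i$ as small loops around the $p_i$, based at $x_0$ through disjoint arcs and ordered compatibly with the planar picture, one has that $\pi_1(\sphere\setminus\{p_1,\dots,p_k\})$ is generated by the classes $[\Gamma_1],\dots,[\Gamma_k]$ subject to the single relation $[\Gamma_1][\Gamma_2]\cdots[\Gamma_k]=1$ (it is in fact free on any $k-1$ of them). Hence a homomorphism $\rho$ is precisely a choice of permutations $s_i=\rho([\Gamma_i])$ with $s_1s_2\cdots s_k=1$, which is exactly the combinatorial data in the lemma; and every such tuple is realized. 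This shows the unramified cover $\pi^{-1}(X)\to X$ is determined, up to isomorphism, by $(p_1,\dots,p_k;s_1,\dots,s_k)$.

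It then remains to recover $\pi:C\to\sphere$ from $\pi^{-1}(X)\to X$, i.e.\ to fill the punctures back in. Around each $p_i$, pick a small disk $D_i\subset\sphere$ containing no other marked point; the restriction of the cover to the punctured disk $D_i^\times=D_i\setminus\{p_i\}$ splits into finitely many connected covers, and every connected degree-$m$ cover of a punctured disk is isomorphic to the punctured disk mapping by $w\mapsto w^m$, the multiplicities $m$ occurring being exactly the cycle lengths of $s_i$. Each such piece extends uniquely across the puncture to $D\to D_i$, $w\mapsto w^m$; gluing these extensions onto $\pi^{-1}(X)$ produces $C$ and $\pi$, and the uniqueness of the extension shows the resulting ramified cover depends only on the data. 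Conversely, any tuple yields a ramified cover by this construction. (If one wants $C$ as a Riemann surface with $\pi$ holomorphic, the same surgery applies verbatim with the local models $w\mapsto w^m$ holomorphic; this is the Riemann existence theorem.)

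The main obstacle — really the only non-formal point — is this final step: checking that the completion across the $p_i$ is canonical, so that isomorphic monodromy data give isomorphic ramified covers and conversely, and that nothing beyond $s_i$ itself (its cycle type) is needed near the branch points. This is handled by the local normal form $w\mapsto w^m$ for connected covers of the punctured disk, together with the elementary fact that a proper map off a single point extends uniquely across it. Everything else is a bookkeeping translation between $\pi_1$-representations and the tuples $(s_1,\dots,s_k)$.
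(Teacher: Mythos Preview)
Your argument is correct and follows essentially the same approach as the paper: the paper's justification (given in the paragraph preceding the lemma and in the appendix on ramified covers) is precisely the monodromy correspondence for the unramified restriction to $\sphere\setminus\{p_1,\dots,p_k\}$, with the relation $s_1\cdots s_k=1$ coming from simple connectivity of the sphere. Your write-up is in fact more careful than the paper's sketch, since you make explicit the $\pi_1$ presentation and the local $w\mapsto w^m$ completion across the branch points.
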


\begin{rmk}\label{coversofsphereremark}
The correspondence is not one-to-one. For instance, by conjugating the permutations $s_i$ by an arbitrary permutation $\sigma$ we get another instance of data determining the same cover. If the cover does not have nontrivial automorphisms, there are $|S(d)|=d!$ ways to alter the data without changing the cover determined by it. On the other hand, if the cover does have nontrivial automorphisms, there are only $|S(d)|/|\aut(C)|$ ways to alter the data.
\end{rmk}

Let $\mathsf H_d(\eta^1,\dots,\eta^k)$ denote the number of degree $d$ covers $\pi:C\to \sphere$ of the
sphere, ramified at $k$ points $p_1,p_2,\dots,p_k$, with monodromies $s_1,s_2,\dots,s_k\in S(d)$ of cycle type $\eta^1,\eta^2,\dots,\eta^k$, respectively, counted with weight $1/|\aut(C)|$. $\mathsf H_d(\eta^1,\dots,\eta^k)$ is known as the \emph{Hurwitz number} for covers of this type.

As $d$ goes to infinity, the number of covers with non-trivial automorphisms grows much slower than the total number of covers, so the weights $1/|\aut(C)|$ end up being irrelevant; see \cite[Section 3.1]{branchedcoverings}.

\begin{prop}
\[\mathsf H_d(\eta^1,\dots,\eta^k)=\sum_{|\lambda|=d}\left(\frac{\dim \lambda}{d!}\right)^2\prod_{i=1}^k\mathbf f_{\eta^i}(\lambda). \]
\end{prop}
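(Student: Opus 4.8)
I would prove this by a standard character-theory computation, expressing the count of covers as a sum over the representation ring of $S(d)$. The first step is to recall from Lemma \ref{coversofsphereremark} and Remark \ref{coversofsphereremark} that a degree-$d$ cover of $\sphere$ ramified over $p_1,\dots,p_k$ with monodromies of cycle type $\eta^1,\dots,\eta^k$ corresponds to a tuple $(s_1,\dots,s_k)\in S(d)^k$ with $s_i\in C_{\eta^i}$ and $s_1s_2\cdots s_k=1$, counted with weight $1/|\aut(C)|$. Since conjugating the whole tuple by $\sigma\in S(d)$ gives the same cover, and the number of such $\sigma$ producing a given labeled tuple is $|S(d)|/|\aut(C)|$, the weighted count $\mathsf H_d(\eta^1,\dots,\eta^k)$ equals $\frac{1}{d!}$ times the number of tuples $(s_1,\dots,s_k)$ with $s_i\in C_{\eta^i}$ and product the identity. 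So it remains to show
\[
\frac{1}{d!}\,\#\{(s_1,\dots,s_k): s_i\in C_{\eta^i},\ s_1\cdots s_k=1\}
=\sum_{|\lambda|=d}\left(\frac{\dim\lambda}{d!}\right)^2\prod_{i=1}^k \mathbf f_{\eta^i}(\lambda).
\]

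**The key computation.** The count of tuples with prescribed conjugacy classes and product equal to a fixed group element is given by the classical Frobenius formula: for a finite group $G$,
\[
\#\{(s_1,\dots,s_k)\in C_1\times\cdots\times C_k : s_1\cdots s_k = g\}
=\frac{|C_1|\cdots|C_k|}{|G|}\sum_{\chi}\frac{\chi(C_1)\cdots\chi(C_k)\,\overline{\chi(g)}}{(\dim\chi)^{k-1}},
\]
where the sum runs over all irreducible characters $\chi$ of $G$. I would either cite this as standard or derive it quickly by iterating the class-algebra multiplication formula $\widehat{C}_i \widehat{C}_j = \sum_\ell c_{ij}^\ell \widehat{C}_\ell$ in the center of $\C[G]$, using that each central element acts on the irreducible representation $\chi$ by the scalar $|C_i|\chi(C_i)/\dim\chi$ and that the coefficient of the identity in $\widehat{C}_1\cdots\widehat{C}_k$ extracts exactly the desired count. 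Applying this with $G=S(d)$, $g=1$ (so $\overline{\chi(g)}=\dim\chi$), and $\chi=\chi^\lambda$ indexed by partitions $\lambda\vdash d$ gives
\[
\#\{(s_1,\dots,s_k): s_i\in C_{\eta^i},\ s_1\cdots s_k=1\}
=\frac{\prod_i |C_{\eta^i}|}{d!}\sum_{|\lambda|=d}\frac{\prod_i \chi^\lambda(\eta^i)\cdot\dim\lambda}{(\dim\lambda)^{k-1}}.
\]

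**Finishing.** Dividing by an extra factor of $d!$ and regrouping, the right side becomes
\[
\sum_{|\lambda|=d}\frac{(\dim\lambda)^2}{(d!)^2}\prod_{i=1}^k\frac{|C_{\eta^i}|\,\chi^\lambda(\eta^i)}{\dim\lambda}
=\sum_{|\lambda|=d}\left(\frac{\dim\lambda}{d!}\right)^2\prod_{i=1}^k \mathbf f_{\eta^i}(\lambda),
\]
using the definition $\mathbf f_\eta(\lambda)=|C_\eta|\,\chi^\lambda(\eta)/\dim\lambda$ from the introduction. This matches the claimed formula.

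**Main obstacle.** The only genuinely delicate point is the bookkeeping with automorphisms: being careful that "weight $1/|\aut(C)|$" on the geometric side corresponds exactly to the clean $\frac{1}{d!}\cdot(\text{number of tuples})$ on the algebraic side, uniformly whether or not the cover has nontrivial automorphisms. Concretely, one should note that $\aut(C)$ is the centralizer in $S(d)$ of the subgroup $\langle s_1,\dots,s_k\rangle$, so the orbit of a tuple under simultaneous conjugation has size $d!/|\aut(C)|$, and summing $1/|\aut(C)|$ over isomorphism classes of covers equals $\frac{1}{d!}$ times the total number of tuples — this is precisely the argument already invoked in Remark \ref{coversofsphereremark}. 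Everything else is the Frobenius formula and algebraic simplification.
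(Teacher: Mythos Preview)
Your proposal is correct and follows essentially the same route as the paper. The paper also reduces to counting tuples with product $1$ and then computes this count by working in the group algebra: it takes the product of class sums $z_{\eta^1}\cdots z_{\eta^k}$, extracts the coefficient of the identity via the trace in the regular representation, decomposes $\Q S(d)=\bigoplus_\lambda V_\lambda^{\oplus\dim\lambda}$, and uses Schur's lemma to identify the scalar action of $z_{\eta^i}$ on $V_\lambda$ as $\mathbf f_{\eta^i}(\lambda)$ --- which is precisely the derivation of the Frobenius formula you offer to sketch.
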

\begin{rmk}
Specializing this fomula to the case of $\mathcal F_{\nu,\mu,p}$ with $p_i=0$ for all $i$, and assimilating the parts of $\mu$ into $\nu$, we get exactly formula \eqref{generatingfunction}: the ramification over 0 is of type $(\nu,2,2,\dots,2)$, and we have ramifications of type $(2,2,\dots,2)$ over the other three conic points of the pillowcase orbifold $\mathfrak P$.
\end{rmk}
\begin{proof}
By Lemma \ref{coversofspherelemma} and Remark \ref{coversofsphereremark}, $\mathsf H_d(\eta^1,\dots,\eta^k)$ equals the cardinality $|A_\eta|$ of the set
\[
A_\eta=\{(s_1,s_2,\dots,s_k)\in S(d)^k:\textrm{$s_i$ has cycle type $\eta^i$, $s_1s_2\cdots s_k=1$}\},
\]
divided by $|S(d)|=d!$.

Let $\Q S(d)$ be the group algebra of the symmetric group $S(d)$. Its center $\mathcal Z(d)$ is known as the \emph{class algebra} and is generated by the vectors
\[z_\lambda=\sum_{\sigma\in C_\lambda} \sigma,\quad \textrm{$\lambda$ a partition of $d$}.\]
Here, $C_\lambda$ denotes the conjugacy class of $S(n)$ consisting of all elements of cycle type $\lambda$. If we take the product $z_{\eta^1}z_{\eta^2}\cdots z_{\eta^k}$, we get precisely the sum of all the different products $s_1s_2\cdots s_k$ of permutations $s_i$ with respective cycle types $\eta^i$.

To determine the cardinality of $A_\eta$, we want to count only the summands that equal the identity. The key observation is that in the adjoint representation $\Q S(d)$, the matrices associated to the action of all other summands have only zeros on the diagonal, since they are permutation matrices. Hence, the number of summands that equal the identity is the trace of the operator $z_{\eta^1}z_{\eta^2}\cdots z_{\eta^k}$ divided by the dimension $d!$ of the space $\Q S(d)$. In other words,
\begin{equation*}
|A_\eta|=\frac1{d!}\trace z_{\eta^1}z_{\eta^2}\cdots z_{\eta^k}.
\end{equation*}

Recall that $\Q S(d)$ reduces to a direct sum of all the irreducible representations $V_\lambda$ of $S(d)$ indexed by partitions $\lambda$ with $|\lambda|=d$, where each of these representations appears with the same multiplicity as its dimension (see Section \ref{representationssection}):
\[\Q S(d)=\bigoplus_{|\lambda|=d} (V_\lambda)^{\oplus\dim \lambda}.\]

Since the operators $z_{\eta^i}$ commute with the whole algebra, by Schur's lemma (see Section \ref{representationssection})
we know that they act as scalars in each $V_\lambda$. The actual scalar they represent is computed as follows: The trace of a scalar matrix $\alpha I$, $\alpha \in \C$, equals $\alpha$ times the dimension of the space, so we will take the trace and divide by the dimension $\dim \lambda$. Since $z_{\eta^i}$ is the sum of all the elements of the conjugacy class $C_{\eta^i}$, its trace equals the trace of each one of them ---namely, the character $\chi^\lambda (\eta^i)$--- multiplied by the size $|C_{\eta^i}|$ of the conjugacy class. We conclude that $z_{\eta^i}$ acts as multiplication by
\[\mathbf f_{\eta^i} (\lambda)=|C_{\eta^i}|\frac{\chi^\lambda(\eta^i)}{\dim \lambda}\]
within $V_\lambda$.

Since the operators $z_{\eta^i}$ are scalars in $V_\lambda$, we can express the trace as a product, as follows:
\begin{align*}
\trace_{\Q S(d)} z_{\eta^1}z_{\eta^2}\cdots z_{\eta^k}&=
\sum_{|\lambda|=d} \dim\lambda\cdot \trace_{V_\lambda} z_{\eta^1}z_{\eta^2}\cdots z_{\eta^k}\\
&=\sum_{|\lambda|=d}\dim\lambda\prod_i \trace_{V_\lambda} z_{\eta^i} \\
&=\sum_{|\lambda|=d}(\dim\lambda)^2\prod_i \mathbf f_{\eta^i}(\lambda).
\end{align*}
Since, as explained above,
\begin{equation}\label{keytofootnote}
\mathsf H_d(\eta^1,\dots,\eta^k)=\frac{1}{d!}|A_\eta|=\frac1{(d!)^2}\trace_{\Q S(d)} z_{\eta^1}z_{\eta^2}\cdots z_{\eta^k},
\end{equation}
we are done.
\end{proof}


\subsection{Asymptotics of the generating function} \label{asymptotics} \label{whatgoeswrong}
In \cite{pillow}, A. Eskin and A. Okounkov treat the generating function \eqref{generatingfunction} as the expectation of the function
\[\g_\nu(\lambda)=\frac{\mathbf f_{(\nu,2,2,\dots)}(\lambda)}{\mathbf f_{(2,2,\dots)}(\lambda)}\]
with respect to the probability distribution on the space of young diagrams $\lambda$ induced by the pillowcase weights
\[\w(\lambda)=\left(\frac{\dim \lambda}{|\lambda|!}\right)^2 \mathbf f_{(2,2,\dots)}(\lambda)^4, \]
multiplied by the additional (complex) parameter $q^{|\lambda|}$, $|q|<1$. This expectation is exactly the sum \eqref{generatingfunction} divided by a normalization constant 
\[Z=\sum_\lambda q^{|\lambda|}\w(\lambda)\]
that makes the weights $q^{|\lambda|}\w(\lambda)$ a probability distribution. The sum defining $Z$ is finite for $|q|<1$ because $0\leq\w(\lambda)<1$ as was proven by S. Fomin and N. Lulov \cite{fominlulov}. In other words,
\[Z(\mu,\nu;q)=Z\cdot \left\langle\g_\nu
\right\rangle_{\w,q}\]
where $\langle\cdot\rangle_{\w,q}$ denotes the expectation with respect to the distribution described above.

They prove that the functions $\g_\nu$ belong to the algebra $\overline\Lambda$ generated by the functions
\[\p_k(\lambda)=\sum_i \left[\left(\lambda_i-i+\tfrac12\right)^k-
\left(-i+\tfrac12\right)^k\right]+\left(1-\frac{1}{2^k}\right)\zeta(-k),\]
 and their $\Z/2\Z$-twisted analogs
 \[\pbar_k(\lambda)=\sum_i \left[(-1)^{\lambda_I-i+1}\left(\lambda_i-i+\tfrac12\right)^k-
(-1)^{-i+1}\left(-i+\tfrac12\right)^k\right]+c_k,\]
where $c_k$ is defined by
\[\sum_k\frac{z^k}{k!}\pbar_k(\emptyset)=\frac1{e^{z/2}-e^{-z/2}}.\]

 Finally, they also prove that the expectation of any function in $\overline\Lambda$ with respect to the distribution induced by the weights $q^{|\lambda|}\w(\lambda)$ is a quasimodular form (see Section \ref{quasimodularforms}).

To do this, they find that the exponential generating function
\begin{multline*}
F\left(e^{x_1},\dots,e^{x_n},-e^{y_1},\dots,-e^{y_m}\right)= \\ \sum_{i_1,\dots,i_n,j_1,\dots,j_n=1}^\infty \frac{x_1^{i_1}}{i_1!}\cdots \frac{x_n^{i_n}}{i_n!} \frac{y_1^{j_1}}{j_1!}\cdots \frac{y_m^{j_m}}{j_m!}\left\langle \p_{i_1}\cdots \p_{i_n}\pbar_{j_1}\cdots\pbar_{j_m}\right\rangle_{\w,q}
\end{multline*}
can be expressed explicitly in terms of Jacobi theta functions
\[
\vartheta(x,q)=(q^{1/2}-q^{-1/2})\prod_{i=1}^\infty\frac{(1-q^ix)(1-q^i/x)}{(1-q^i)^2}
\]
(see also Section \ref{jacobithetas}), as follows. We let $N=n+m$,
\[\hat x_i=\left\{\begin{array}{ll}
e^{x_i}, & i=1,2,\dots, n, \\
-e^{y_{i-n}}, & i=n+1,n+2,\dots,N.
\end{array}\right.
\]
Then $F$ equals
\begin{equation}\label{quotientofthetas}
\frac{1}{\prod_i\vartheta(\hat x_i)}[u_1^0\cdots u_{N}^0]\prod_{i<j}\frac{\vartheta(u_i/u_j)\vartheta(\hat x_iu_i/\hat x_ju_j)}{\vartheta(\hat x_iu_i/u_j)\vartheta(u_i/\hat x_ju_j)}\prod_i\sqrt{\frac{\vartheta(-u_i)\vartheta(\hat x_iu_i)}{\vartheta(u_i)\vartheta(-\hat x_iu_i)}}
\end{equation}
(For clarity, we have omitted $q$ in each occurrence of $\vartheta$.) The brackets indicate that we take the coefficient of $u_1^0\cdots u_N^0$ of the expansion of what is found to the right of them. The series expansion of this generating function must be taken in the domain
\[|u_N/q|>|\hat x_1 u_1|>|u_1|>\cdots>|\hat x_Nu_N|>|u_N|>1.\]
The rest of their argument is very similar to what we do in Section \ref{sec:quasimodularitytwoquotients}.

With expression \eqref{quotientofthetas} at hand, one can plausibly use the modular transformation of $\theta$ and integrals on loops surrounding zero to find, by stationary phase, the coefficients
\[\left\langle \p_{i_1}\cdots \p_{i_n}\pbar_{j_1}\cdots\pbar_{j_m}\right\rangle_{\w,q},\]
from where the expectation $\g_\nu$ can also be derived, since it is simply a linear combination of these numbers.

We note that for the shifted power functions $\p_k$ the first term of the expectation is multiplicative (see Section \ref{sec:limitshape}):
\[\langle\p_i\p_j\rangle_\w-\langle\p_i\rangle_\w\langle\p_j\rangle_\w =o(1-q)^{-i-j-2}.\]
(Of course, $\langle \p_k\rangle_\w=O(1-q)^{-k-1}$; see \cite[Section 3.3.5]{pillow}.)
However, the expectations of the functions $\pbar_k$ turn out to be quite delicate. They vanish to first order, the expectation of their products is not multiplicative, and a simplification reminiscent of Wick's theorem, that is, an identity of the kind
\[\langle abcd \rangle = \langle ab \rangle\langle cd\rangle+\langle ac \rangle\langle bd\rangle+\langle ad \rangle\langle bc\rangle,\]
(for functions $a$, $b$, $c$, and $d$ of zero mean) does not exist, as one learns as soon as one computes
\[\langle\pbar_1^4\rangle_{\w,q}\approx\frac{11\,\pi^4}{256\,h^4}\neq \frac{3\,\pi^4}{256\,h^4}\approx3\langle\pbar_1^2\rangle^2_{\w,q}.\]
Here, the symbol $\approx$ indicates the coefficient of the fastest-increasing term of each of these as $q\to 1$, and $q=e^{-h}$. (Of course one does have that $\langle\pbar_1\rangle_{\w,q}=0$.) These numbers seem to be a consequence of the lack of normal convergence to the limit shape for the distribution of the pillowcase weights, which impedes the existence of a Central Limit Theorem (see Remark \ref{rmk:noCLT}). For these reasons, the possibility of an analysis closely parallel to what was done for the volumes of the moduli spaces of Abelian differentials by A. Eskin and A. Okounkov \cite{branchedcoverings} seems rather unlikely.

\chapter
[A formula for near-involutions]{A formula for the characters of near-involutions}
\label{mychapter}
Recall from Section \ref{howtocomputethevolume} that we are interested in the asymptotics of the expectation of
\[\g_\nu(\lambda)=\frac{\f_{(\nu,2,2,\dots,2)}(\lambda)}{\f_{(2,2,\dots,2)}(\lambda)}.\]
with respect to the distribution induced by the pillowcase weights.
In this chapter, we intend to prove the following formula
\begin{equation}\label{eq:formulaforg}
\g_\nu(\lambda)=\frac{2^{|\nu|/2}}{\mathfrak z(\nu)}\sum_\mu\sigma_\mu\,\chi^\mu(\nu)\,\s_a(\alpha)\,\s_b(\beta),
\end{equation}
where the sum is taken over all balanced partitions $\mu$ of size $|\mu|=|\nu|$ whose Young diagram is completely contained inside the Young diagram of $\lambda$, $(\alpha,\beta)$ and $(a,b)$ are the 2-quotients of $\lambda$ and $\mu$ (to be defined below), and $\sigma_\mu$ equals $-1$ to the power of half the number of odd parts in $\mu$.

This will follow immediately from formula \eqref{eq:nearinvolutionformula}, whose proof we develop throughout the chapter, and the fact that
\[\g_\nu(\lambda)=\frac{2^{|\nu|/2}(|\lambda|/2)!}{\mathfrak z(\nu)\left(\frac{|\lambda|-|\nu|}2\right)!}\frac{\chi^\lambda(\nu,2,2,\dots,2)}{\chi^\lambda(2,2,\dots,2)}.
\]

In the final section of the chapter, we prove that the expectations
\[\langle\s_a(\alpha)\s_b(\beta)\rangle_{\w,q}\]
are quasimodular forms, which greatly eases their practical computation.
\section{2-quotients}\label{twoquotientssection}
We need to define the $2$-quotients of a partition and some of their properties. We mainly follow \cite[Excercise 7.59]{stanley}, \cite[Example I.1.8]{macdonald}.

\paragraph{Modified Frobenius coordinates and their geometric interpretation.}

Let $\lambda$ be a partition and let $\xi_i=\lambda_i-i+\frac12 \in \Z+\frac12$ be its \emph{modified Frobenius coordinates}\footnote{This use of the term `modified Frobenius coordinates' is non-standard.}. If we take $\lambda_i$ to equal 0 for all $i$ greater than the number of non-zero parts of $\lambda$, we can visualize these coordinates as follows. We rotate the picture of the Young diagram of $\lambda$ by $135^\circ$ counter-clockwise, and we rescale it by $\sqrt2$. We then place black pebbles at each point $\xi_i$ on the reversed $x$-axis, and white ones in the remaining half-integers. For example, if $\lambda=(5,4,4,2)$ we start with the usual Young diagram,
\[\yng(5,4,4,2)\]
and then we rotate to get the following diagram.
\begin{center}
\includegraphics{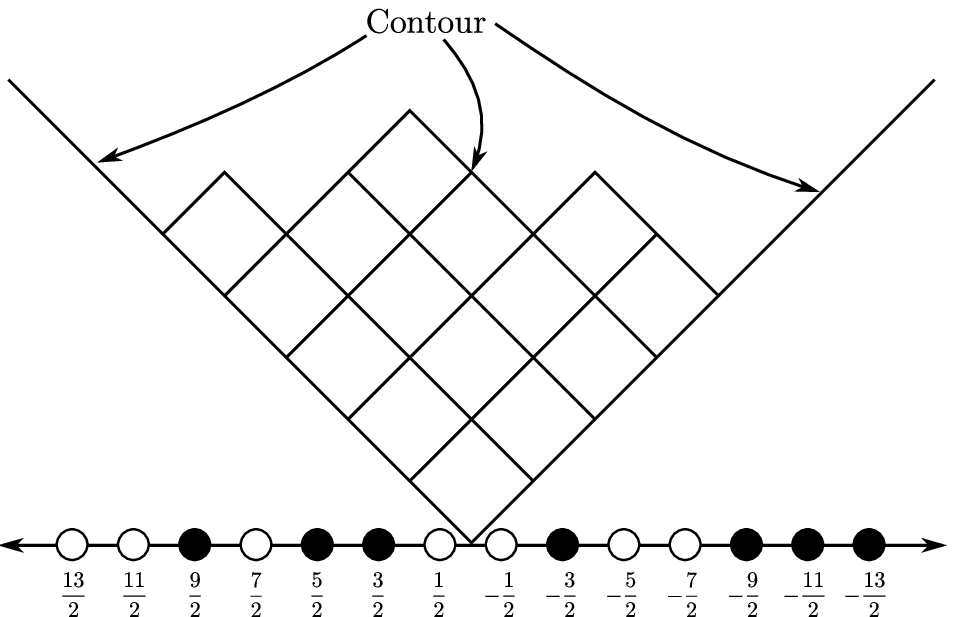}
\end{center}
In the diagram we have also drawn the rotated axes, which extend to infinity. The \emph{contour} $L_\lambda$ of $\lambda$ is the graph of the continuous function that describes the piecewise-linear curve starting (in the picture) with the diagonal axis on the left, running along the top border (or rim) of the diagram of the partition and ending with the diagonal axis on the right. It is clear that the contour of $\lambda$ has slope $-1$ wherever there is a white pebble, and has slope $+1$ in the intervals where  there is a black pebble. The association of sequences of pebbles to a Young diagram is known as a \emph{Maya diagram}. Observe that in such a diagram there are always as many black pebbles to the left of zero as there are white pebbles to the right of zero.

If we assign the number 0 to the white pebbles and the number 1 to the black ones, we get a sequence $\{c_i\}_{i\in \Z}$, such that $c_i\in\{0,1\}$, $c_i=0$ for all $i\ll 0$, and $c_i=1$ for all $i\gg 0$. In the case of the example above the sequence equals:
\[\dots,0,0,0,1,0,1,1,0,0,1,0,0,1,1,1,\dots\]
 The sequence completely determines the partition $\lambda$. It coincides with the sequence one gets if one assigns the number 1 to each vertical segment  and the number 0 to each horizontal segment in the contour, as follows:
\begin{center}
\includegraphics{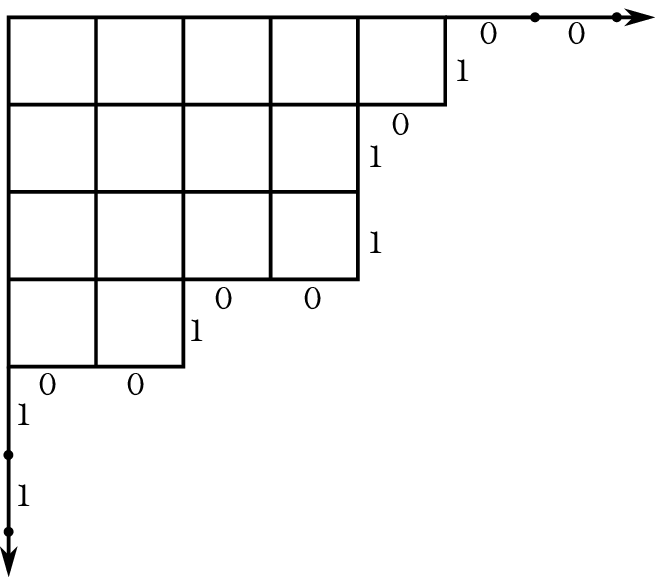}
\end{center}
Note that two sequences $\{a_i\}_{i\in\Z}$ and $\{b_i\}_{i\in\Z}$ of zeros and ones with the above properties determine the same partition if they are translates $a_i=b_{i-n}$ of each other, for some $n\in \Z$. Both points of view ---Maya diagrams and binary sequences--- are equivalent. In what follows, we will prefer the language of pebbles placed under the rotated diagram.

\paragraph{Pebble operations and their relation to strips and hooks.}

Consider how the sequence $\{c_i\}$ changes when one adds a strip of length $p$ to the rim of the partition. (Strips are defined in Section \ref{murnaghannakayamarule}.) Adding a strip is equivalent to moving a pebble to the left, and interchanging it with a white pebble. Here is an example: we add a 7-strip to the Young diagram of the partition $(5,4,4,2)$ to obtain $(6,6,5,5)$; compare with the diagram above.
\begin{center}
\includegraphics{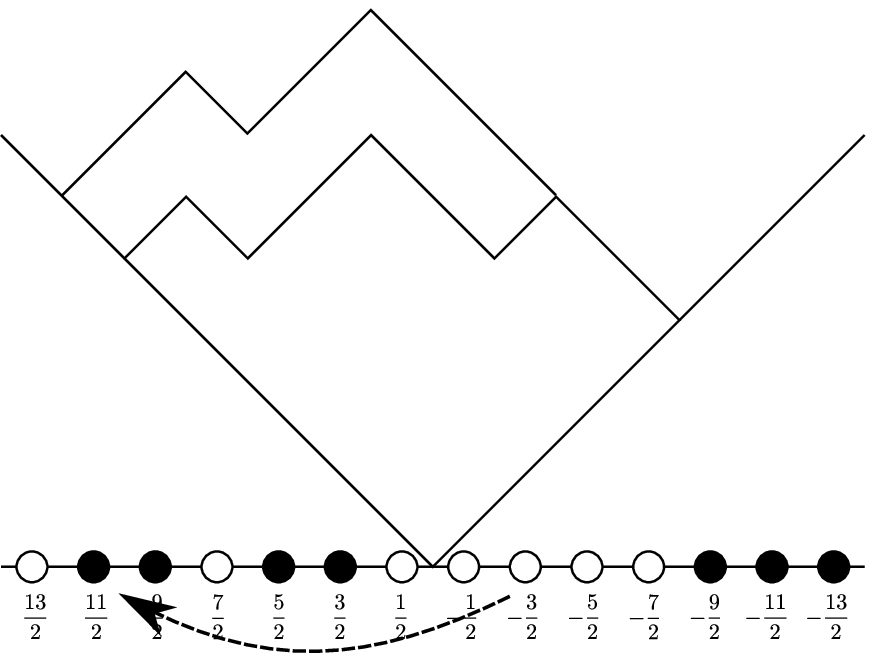}
\end{center}
In the picture, the black pebble that was originally located at the site $-\frac32$ has been moved to the site $-\frac32+7=\frac{11}2$.
The left-most cell of the strip is always posed on a site that originally had a downward slope (i.e., a white pebble) and it becomes an upward slope. The right-most cell of the strip is always posed on a spot with upward slope, and it turns it into a site with downward slope. All the sites in-between preserve their original slopes.

It is also easy to identify, given the sequence of pebbles of a partition, the sites from where it is possible to remove a $p$-strip: one only needs to look for a site with a black pebble such that in the site $p$ units to the right there is a white pebble. In the example above involving partition $(6,6,5,5)$, since we have a black pebble at the site $\frac{11}2$, we can remove strips of sizes 2, 5, 6, 7, 8, and 9, corresponding respectively to the white pebbles at the sites $\frac 72$, $\frac12$, $-\frac12$, $-\frac32$, $-\frac 52$, and $-\frac72$.

Note that there is an immediate correspondence between the strips we can remove and the hooks of the partition. (The hooks and their lengths are defined in Section \ref{murnaghannakayamarule}.) Indeed, if instead of joining the left-most and right-most cells of the strip along the rim, we look at the hook whose two ends correspond to these two cells, it is easy to see that the lengths of the hook and the strip are exactly the same. In the following diagram, we see the same strip as before, and the corresponding hook shaded in grey.
\begin{center}
\includegraphics{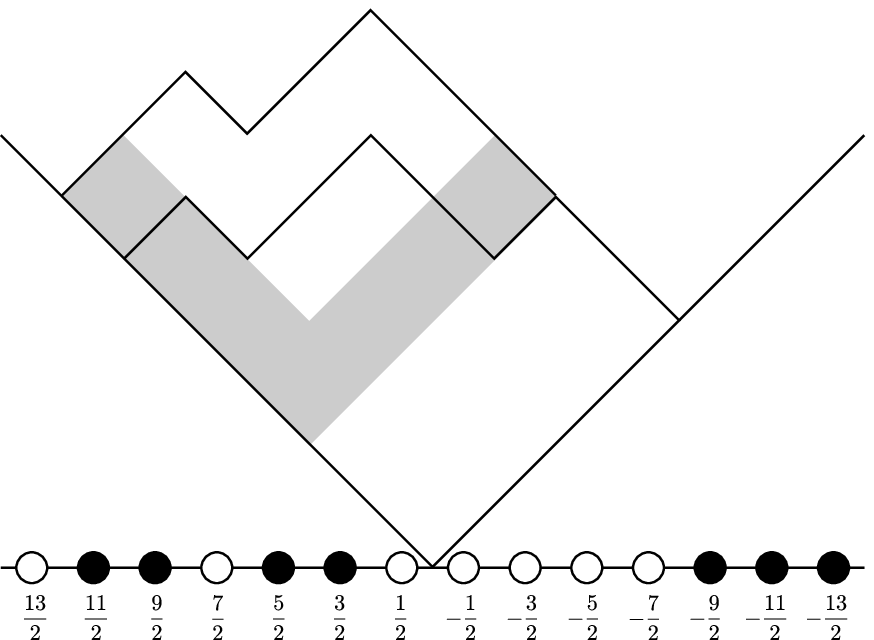}
\end{center}
Both the strip and the hook have length 7.

\paragraph{2-quotients.}

Given the sequence of black and white pebbles corresponding to any partition $\lambda$, we can split it into two sequences by picking the sites alternatingly. Namely, we assign to one of them all the pebbles on the sites $2n+\frac 12$, $n\in \Z$, and to the other, the pebbles on the sites $2n-\frac12$, $n\in \Z$. The resulting sequences determine two partitions $\alpha$ and $\beta$ which are together known as the \emph{2-quotient} $(\alpha,\beta)$ of $\lambda$, and they carry information on how $\lambda$ can be built by adjoining \emph{2-dominoes} $\yng(2)$, $\yng(1,1)$ and a 2-core, to be defined below. For example, in the case of the partition $(5,4,4,2)$, we split the pebbles as follows:
\begin{center}
\includegraphics{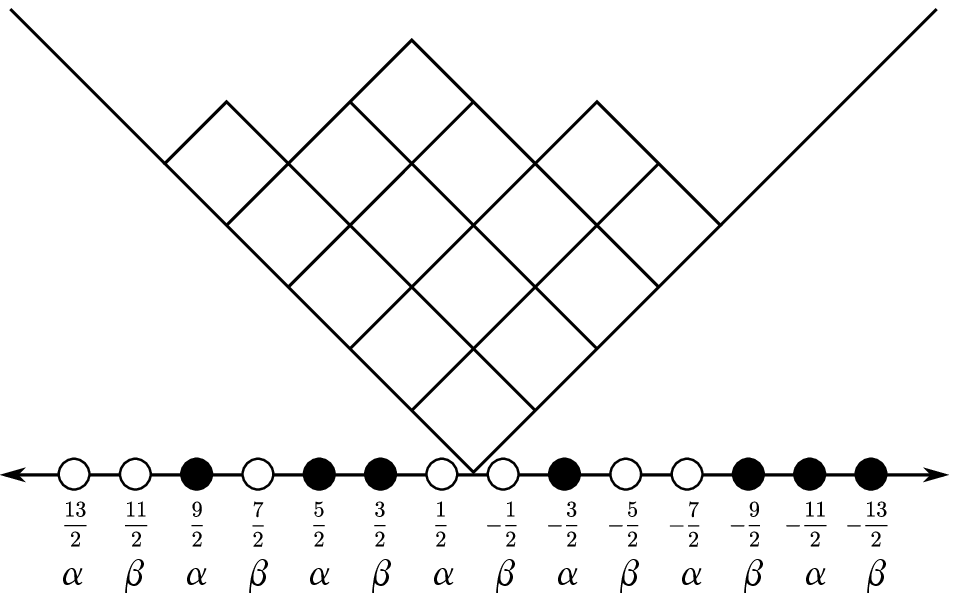}
\end{center}
and we obtain the following two sequences of black and white pebbles:
\begin{center}
\includegraphics{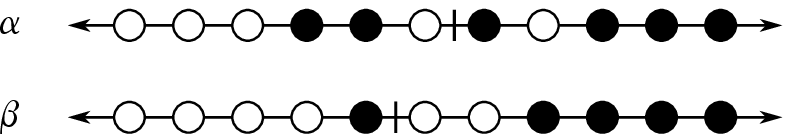}
\end{center}
(Here, the position of zero has been recorded with a vertical line. It is off-center in the sense that the number of black pebbles to the left of it does not equal the number of white pebbles to the right of it. We will discuss this below.)
These in turn correspond to the following partitions:
\[\alpha=\yng(2,2,1)\quad\textrm{and}\quad\beta=\yng(2).\]

Every time we remove a 2-domino (which is the same as a strip of length 2) from $\lambda$, we are removing a cell from one of the components $\alpha$ and $\beta$ of the 2-quotient. This is because we are exchanging two pebbles of opposite color which are exactly 2 units apart, and hence belong in the same component of the 2-quotient.

\paragraph{2-cores.}

In the process of removing 2-dominoes, we may get stuck before removing all the cells in the partition $\lambda$. The resulting partition is known as the \emph{2-core} and it is always shaped as a staircase, that is, it is of the form $(n,n-1,n-2,\dots,2,1)$, $n=1,2,\dots$. The following are the first few 2-cores:
\[\yng(1)\qquad\yng(2,1)\qquad\yng(3,2,1)\qquad\yng(4,3,2,1)\]
It is easy to see, for example, that the 2-core of the partition $(5,4,4,2)$ is $(1)$. For instance, we may remove its 2-dominoes in the order suggested by the following diagram:
\[\young(\hfil6633,7522,7511,44)\]
These moves in turn correspond to the following removals in the 2-quotients:
\[\alpha=\young(63,52,4)\quad \beta=\young(71)\]
In terms of pebble exchanges, the picture is as follows:
\begin{center}
\includegraphics{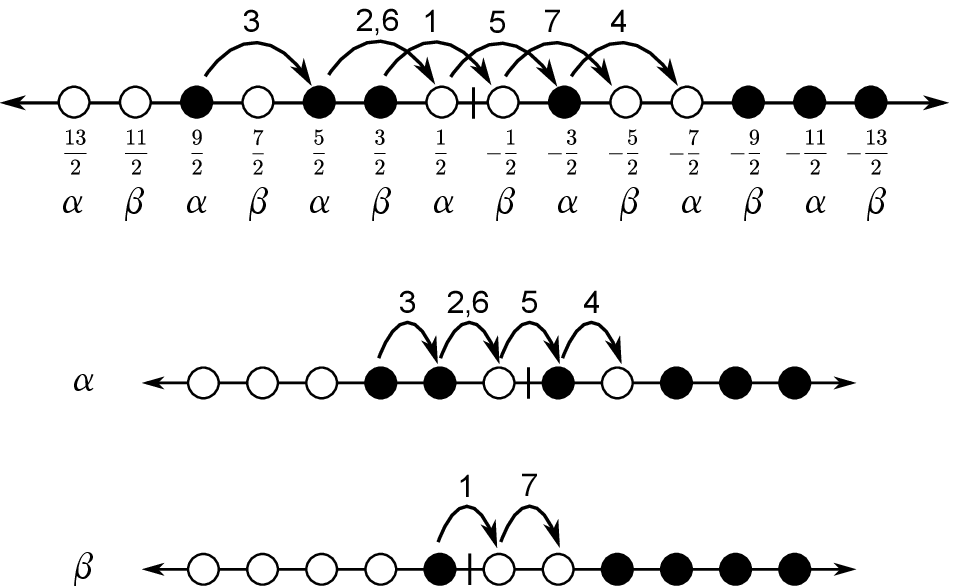}
\end{center}
As we noted before, the vertical bar denoting the original position of zero with respect to the sequences of pebbles for $\alpha$ and $\beta$ is off-centered; this is just a consequence of the presence of the non-trivial 2-core $\yng(1)$. A larger 2-core produces a larger translation of the components of the 2-quotient.

\paragraph{Balanced partitions.}

Recall that a partition is \emph{balanced} when it can be completely constructed by adjoining 2-dominoes. Equivalently, a partition is balanced exactly when its 2-core is empty. This is also equivalent to the components of the 2-quotient being centered (i.e., having the same number of black pebbles to the left of zero as white pebbles to the right of zero). Examples of balanced partitions are $(2)$, $(4,4,2)$, and $(3,3,2,1,1)$. As we saw earlier, $(5,4,4,2)$ and $(3,2,1)$ are not balanced.

It follows from the Murnaghan-Nakayama rule (see Section \ref{murnaghannakayamarule}) that the character $\chi^\lambda(2,2\dots,2)$ vanishes automatically when $\lambda$ is not balanced.

We record the following important facts about balanced partitions:
\begin{lem}\label{LemTwoQuotients}
Let $\lambda$ be a balanced partition and let $\alpha$ and $\beta$ be the components of its 2-quotient. Then
\begin{enumerate}[i.]
 \item \label{LemAlphaPlusBetaIsLambdaOverTwo} $|\alpha|+|\beta|=|\lambda|/2$
 \item \label{LemQuotientHooksAreHalvesOfEvenHooks} The set $\{h_\square/2:\textrm{$h_\square$ is the length of the hook of the cell $\square\in \lambda$ and $h_\square$ is even}\}$ equals the set $\{\textrm{hook lengths of $\alpha$}\}\cup\{\textrm{hook lengths of $\beta$}\}$.
 \item \label{LemHalfOfHooksAreEven} Exactly half of the hook lengths of $\lambda$ are even.
 \item \label{LemOddAndEvenHooksPositions}The two pebbles encoding a hook of even length (i.e., the pebbles laying at its extremes) are both in the same component $\alpha,\beta$ of the 2-quotient. In the case of a hook of odd length, they are in opposite components of the 2-quotient.
\end{enumerate}
\end{lem}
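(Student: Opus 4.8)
The plan is to work entirely in the language of pebbles (Maya diagrams) developed in Section \ref{twoquotientssection}, since every claim in the lemma is really a statement about how the $2$-quotient splitting interacts with the positions of pebbles. Fix a balanced partition $\lambda$ and encode it by the sequence $\{c_i\}_{i\in\Z}$ of zeros and ones, normalized so that the number of black pebbles left of $0$ equals the number of white pebbles right of $0$; since $\lambda$ is balanced, this normalization makes the two sub-sequences (even sites $2n-\tfrac12$ and odd sites $2n+\tfrac12$) \emph{each} centered as well, which is exactly the statement recalled just before the lemma. I would first prove \eqref{LemQuotientHooksAreHalvesOfEvenHooks} and \eqref{LemOddAndEvenHooksPositions} together, as they are the heart of the matter, and then deduce \eqref{LemHalfOfHooksAreEven} and \eqref{LemAlphaPlusBetaIsLambdaOverTwo} as corollaries.

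For \eqref{LemOddAndEvenHooksPositions}: recall from the ``pebble operations'' paragraph that a hook of length $h$ in $\lambda$ corresponds to a pair of sites $(j, j+h)$ where $j$ carries a black pebble and $j+h$ carries a white pebble. The two sites lie in the same component of the $2$-quotient if and only if $j$ and $j+h$ have the same parity (both of the form $2n+\tfrac12$ or both of the form $2n-\tfrac12$), which happens exactly when $h$ is even; they lie in opposite components exactly when $h$ is odd. This is immediate and gives \eqref{LemOddAndEvenHooksPositions}. For \eqref{LemQuotientHooksAreHalvesOfEvenHooks}: when $h = 2m$ is even, both endpoints lie in, say, the $\alpha$-component, at sites whose $\alpha$-internal indices differ by $m$. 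Since the black/white pattern at those two $\alpha$-sites (black at the left, white at the right) is precisely the pattern that encodes a hook of length $m$ in $\alpha$, and since the sites strictly between them in $\lambda$ restrict to the $\alpha$-sites strictly between them in $\alpha$ (the interleaving with $\beta$-sites being irrelevant to $\alpha$'s own diagram), the even hook of length $2m$ in $\lambda$ maps to a hook of length $m$ in $\alpha$, and this correspondence is a bijection onto all hooks of $\alpha$; similarly for $\beta$. Hence $\{h_\square/2 : h_\square \text{ even}\} = \{\text{hooks of }\alpha\} \cup \{\text{hooks of }\beta\}$ as multisets.

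Given \eqref{LemQuotientHooksAreHalvesOfEvenHooks}, part \eqref{LemAlphaPlusBetaIsLambdaOverTwo} follows by counting: the number of hooks of a partition equals the number of its cells, so $|\alpha| + |\beta|$ equals the number of even hook lengths of $\lambda$, and it remains to show this is $|\lambda|/2$, which is exactly \eqref{LemHalfOfHooksAreEven}. So everything reduces to \eqref{LemHalfOfHooksAreEven}: \emph{a balanced partition has exactly as many even hook lengths as odd ones}. I would prove this by induction on $|\lambda|/2$, removing a single $2$-domino (a $2$-strip). Adding a $2$-strip to a partition $\mu$ to get $\lambda$ changes the hook-length multiset in a controlled way, governed by the pebble move that shifts one black pebble two sites to the left past a white one; tracking the parity changes of the affected hooks (the new hook of length $2$, and the hooks whose one endpoint is the moved pebble, whose lengths each change by $2$ and hence preserve parity) shows that the difference (\#even hooks) $-$ (\#odd hooks) increases by exactly $2$ — matching the fact that $|\lambda|$ increases by $2$ — so the equality ``\#even $=$ \#odd'' propagates from the base case $\lambda = \emptyset$.

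I expect the main obstacle to be the bookkeeping in the inductive step for \eqref{LemHalfOfHooksAreEven}: when the black pebble at site $j$ moves to site $j-2$, the hooks of $\lambda$ do not simply restrict to hooks of $\mu$ plus one new one — rather, each hook of $\lambda$ with an endpoint at $j$ becomes a hook of $\mu$ whose corresponding endpoint is at $j-2$, changing that hook's length by $2$, while hooks not touching $j$ are unchanged, and one genuinely new hook of length $2$ appears. One must check that no other hook lengths are affected (in particular that hooks with an endpoint strictly between $j-2$ and $j$ behave correctly), and that the net count is right. An alternative that sidesteps the induction is to prove \eqref{LemHalfOfHooksAreEven} directly from \eqref{LemAlphaPlusBetaIsLambdaOverTwo} and \eqref{LemQuotientHooksAreHalvesOfEvenHooks} by a separate argument that $|\lambda| = 2(|\alpha|+|\beta|)$ for balanced $\lambda$ — e.g.\ by iterating domino removal until $\lambda$ is empty and noting each removal deletes one cell from $\alpha\sqcup\beta$ and two cells from $\lambda$ — in which case \eqref{LemHalfOfHooksAreEven} is the only statement needing the domino-removal recursion and the hook-parity analysis becomes unnecessary.
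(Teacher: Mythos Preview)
The paper offers no proof beyond the remark ``All of these points are clear from the description given above,'' so your pebble-based elaboration is exactly in the intended spirit. Your arguments for \eqref{LemOddAndEvenHooksPositions} and \eqref{LemQuotientHooksAreHalvesOfEvenHooks} are correct. (One harmless slip: in the paper's convention a hook of length $h$ corresponds to a black pebble at site $j$ and a white pebble at $j-h$, not $j+h$; only the parity of $h$ enters your argument, so nothing is affected.)

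There is, however, a genuine bookkeeping error in your inductive step for \eqref{LemHalfOfHooksAreEven}. When a $2$-domino is added by swapping the black pebble at site $x$ with the white pebble at site $x+2$, you correctly observe that hooks with one endpoint at a moving pebble have their lengths shifted by $2$, preserving parity, and that a new hook of length $2$ appears between $x$ and $x+2$. But a \emph{second} new hook, of length $1$, also appears, coming from the intermediate site $x+1$: if $x+1$ carries a black pebble, the pair (black at $x+1$, white at the new white site $x$) is new; if $x+1$ carries a white pebble, the pair (black at the new black site $x+2$, white at $x+1$) is new. Thus the net change is one new even hook \emph{and} one new odd hook, so $\#\text{even}-\#\text{odd}$ remains $0$; it does not ``increase by exactly $2$'' as you wrote (that claim would force all hooks to be even after $|\lambda|/2$ steps, which is absurd). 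With this correction the induction goes through.

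Your own suggested alternative is cleaner and sidesteps the hook bookkeeping entirely: prove \eqref{LemAlphaPlusBetaIsLambdaOverTwo} directly from the paper's observation that each $2$-domino removed from $\lambda$ removes exactly one cell from $\alpha\sqcup\beta$, giving $|\alpha|+|\beta|=|\lambda|/2$; then \eqref{LemHalfOfHooksAreEven} follows at once from \eqref{LemAlphaPlusBetaIsLambdaOverTwo} and \eqref{LemQuotientHooksAreHalvesOfEvenHooks}, since the number of even hooks of $\lambda$ equals the total number of cells in $\alpha$ and $\beta$.
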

All of these points are clear from the description given above.

\section{Domino tableaux}
This section contains a few results about shapes made up of 2-dominoes $\yng(2)$ that we will need later on. 
\subsection{The classical case}\label{sec:classicaldominotableaux}
In this section, we prove a classical formula for the characters of irreducible representations of the symmetric group evaluated at involutions, that is, elements of cycle type $(2,2,\dots,2)$. Our first observation is that $\chi^\lambda(2,2,\dots,2)$ vanishes unless $\lambda$ is balanced. This is because the sum appearing in the Murnaghan-Nakayama rule (see Section \ref{murnaghannakayamarule}) is empty if $\lambda$ cannot be built by adjoining 2-dominoes.

Let $\lambda$ be a balanced partition, $o$ be the number of \emph{odd} parts of $\lambda$, and $\sigma_\lambda=(-1)^{o/2}$. Note that $o$ must be even because $\lambda$ is balanced so item \eqref{LemHalfOfHooksAreEven} of Lemma \ref{LemTwoQuotients} holds. Let $(\alpha,\beta)$ be the 2-quotient of $\lambda$. Then we have
\begin{equation}\label{eq:dominoformula}
\chi^\lambda(2,2,\dots,2)=\sigma_\lambda \binom{|\lambda|/2}{|\alpha|}\dim \alpha \dim \beta.
\end{equation}
We now explain how this is an easy consequence of the Murnaghan-Nakayama rule (see Section \ref{murnaghannakayamarule}). This rule says that
\[\chi^\lambda(2,2,\dots,2)=\sum_\eta (-1)^{\height \eta}\]
where the sum is over all ordered 2-domino tilings of $\lambda$ starting at the origin (i.e., $(2,2,\dots,2)$-strip decompositions), and $\height \eta$ denotes the height of $\eta$, as defined in Section \ref{murnaghannakayamarule}.

First note that all tilings $\eta$ of $\lambda$ by 2-dominoes have the same height modulo 2, and it is congruent with $o/2$. To see this, observe the following. The 2-dominoes placed horizontally have height 0, so they are irrelevant. On the other hand, the length of each row of the Young diagram of $\lambda$ is congruent modulo 2 to the number of vertically-placed dominoes it intersects. Since each of these vertical dominoes spans two rows, we see that the number of vertical dominoes is congruent to $o/2$ modulo 2, so the height is as well. This determines the sign $\sigma_\lambda$ of the value of the character $\chi^\lambda(2,2,\dots,2)$.

To count the tilings $\eta$, observe that their construction is equivalent to constructing the components $\alpha$ and $\beta$ of the 2-quotient, one cell at a time. An obvious consequence of the Murghan-Nakayama rule for $\chi^\alpha(1,1,\dots,1)=\dim \alpha$ is that there are $\dim \alpha$ ways to construct $\alpha$ one-cell at a time, and similarly there are $\dim \beta$ ways to construct $\beta$. Among the total $|\alpha|+|\beta|$ cells that should be added, we have to decide in which order we will introduce the ones for $|\alpha|$ and the ones for $|\beta|$. The count of their `internal ordering' will be taken care of by $\dim \alpha$ and $\dim \beta$, but we get a binomial factor corresponding to the number of choices we have for assigning, among the total $|\alpha|+|\beta|$ turns, those that correspond to cells of $\alpha$ (and the rest will be for cells of $\beta$):
\[\binom{|\alpha|+|\beta|}{|\alpha|}=\binom{|\lambda|/2}{|\alpha|}.\]
For the last equality, we have used item \eqref{LemAlphaPlusBetaIsLambdaOverTwo} of Lemma \ref{LemTwoQuotients}. Thus the total number of 2-dominio tilings of $\lambda$ is
\[\binom{|\lambda|/2}{|\alpha|}\dim\alpha\dim\beta.\]
This finishes the proof of formula \eqref{eq:dominoformula}.

Note that, using the fact that $\dim \alpha$ and $\dim \beta$ are given by the hook formula \eqref{hookformula}, together with item \eqref{LemQuotientHooksAreHalvesOfEvenHooks} of Lemma \ref{LemTwoQuotients}, we get
\begin{equation}\label{dimensionsofquotients}
\dim\alpha\dim\beta=\frac{|\alpha|!|\beta|!}{\prod\textrm{halves of the even hook lengths of $\lambda$}}.
\end{equation}

\subsection{The case of skew tableaux}
We will need a slight generalization of formula \eqref{eq:dominoformula}. We let $\lambda$ and $\mu$ be two partitions, such that the Young diagram of $\mu$ is contained inside the diagram of $\lambda$, and we denote by $\lambda/\mu$ the associated skew diagram; see Section \ref{murnaghannakayamarule} for an example.

We define the character $\chi^{\lambda/\mu}$ to be the obvious generalization of the Murnaghan-Nakayama rule to this case. Namely, we define an $\eta$-strip decomposition of $\lambda/\mu$ to be a sequence of partitions $\nu^0=\mu,\nu^1,\nu^2,\dots,\nu^{k-1},\nu^k=\lambda$ such that $\nu^i/\nu^{i-1}$ is a strip of length $\eta_i$, $i=1,2,\dots,\ell(\eta)$. The height of an $\eta$-decomposition of $\lambda/\mu$ is the sum of the heights of the strips $\nu^i/\nu^{i-1}$, $i=1,2,\dots,\ell(\eta)$. Finally,
\[\chi^{\lambda/\mu}(\eta)=\sum_\nu (-1)^{\height \nu},\]
where the sum is taken over all $\eta$-decompositions $\nu$ of $\lambda/\mu$.

Let $\lambda$ and $\mu$ be balanced and let $(\alpha,\beta)$ and $(a,b)$ be their 2-quotients. We define the \emph{2-quotient} of $\lambda/\mu$ to be the pair of skew diagrams $(\alpha/a,\beta/b)$.

We can define the \emph{dimension} $\dim (\lambda/\mu)$ to be the value of $\chi^{\lambda/\mu}(1,1,\dots,1)$.
Then practically the same proof as for \eqref{eq:dominoformula} shows that
\begin{equation}\label{eq:skewdominoformula}
\chi^{\lambda/\mu}(2,2,\dots,2)=\sigma_\lambda\sigma_\mu \binom{|\lambda/\mu|/2}{|\alpha/a|}\dim (\alpha/a)\dim(\beta/b).
\end{equation}
This formula is still valid for non-balanced partitions $\lambda$ and $\mu$, with the caveat that the character $\chi^{\lambda/\mu}(2,2,\dots,2)$ vanishes when the 2-core of $\lambda$ is different from the 2-core of $\mu$.

\section{Formula for the dimension of skew diagrams}
In this section we will state and prove a formula due to A. Okounkov and G. Olshanski \cite{okounkovolshanski} for the dimension $\dim (\lambda/\mu)$ of a skew diagram. We follow their exposition.

Let
\[(x\downharpoonright k)=x(x-1)(x-2)\cdots(x-k+1).\]

We first define the \emph{shifted Schur polynomials} in $n$ variables, indexed by the partition $\mu$, as the following ratio of two $n\times n$ determinants:
\[\s_\mu(x_1,\dots,x_n)=\frac{\det [(x_i+n-i\downharpoonright\mu_j+n-j)]} {\det[(x_i+n-i\downharpoonright n-j)]},\quad 1\leq i,j\leq n.\]
These polynomials satisfy \cite{okounkovolshanski} a stability condition
\[\s_\mu(x_1,\dots,x_n,0)=\s_\mu(x_1,\dots,x_n),\]
which allows us to take inverse limits, just as in the definition of symmetric functions (see Section \ref{symmetricfunctions}). The resulting objects are known as \emph{shifted Schur functions} and we will denote them by $\s_\mu(x_1,x_2,\dots)$. These are also sometimes called \emph{Frobenius Schur functions} (see for example \cite{frobeniusschur}). The algebra $\Lambda^*$ they generate coincides with the algebra generated by the shifted-symmetric power functions $\p_k$.

We want to prove the following relation \cite{okounkovolshanski}:
\begin{equation}\label{eq:dimensionofskewdiagram}
\frac{\dim(\lambda/\mu)}{\dim\lambda}=\frac{\s_\mu(\lambda)}{ ( |\lambda|\downharpoonright|\mu|)}.
\end{equation}

We choose to reproduce the second proof of \eqref{eq:dimensionofskewdiagram} given in \cite{okounkovolshanski} because it is more self-contained and elementary than the two others. Let $l=|\lambda|$ and $k=|\mu|$. We have, by the branching rule, orthonormality of the characters, and Frobenius reciprocity (see Section \ref{branchingrule}),
\[
\dim(\lambda/\mu)=\langle\res\chi^\lambda,\chi^\mu\rangle_{S(k)} =\langle\chi^\lambda,\ind_{S(l)}\chi^\mu\rangle_{S(l)}.
\]
The \emph{characteristic map} $\ch$, defined in \cite[Section I.7]{macdonald}, assigns to each character of the symmetric group (of any order) a symmetric function. It is defined, for a character $\chi$ of $S(n)$, by
\[\ch \chi = \sum_{|\rho|=n}\frac {\chi(\rho)}{\mathfrak z(\rho)} p_\rho,\]
where $p_\rho=p_{\rho_1}\cdots p_{\rho_k}$ is the symmetric power function, the product of several instances of $p_j=\sum_{i\ge 1} x_i^j$.
In this correspondence, $\ch \chi^\lambda=s_\lambda$ is the (traditional) symmetric Schur function. On the other hand, the induction of $\chi^\mu$ to $S(k+1)$ equals the induction of $\chi^\mu\times\chi^{(1)}$ (i.e., the character of the product of the representation indexed by $\mu$ and the character of the 1-dimensional identity representation) from $S(k)\times S(1)$ to $S(k+1)$. It is a property of $\ch$ that
\[\ch \ind_{S(k+1)} \left(\chi^\mu\times\chi^{(1)}\right)=\ch \chi^\mu\cdot\ch \chi^{(1)}=s_\mu p_1.\]
After $l-k$ similar steps, we get
\[\ch \ind_{S(l)} \chi^\mu=s_\lambda p_1^{l-k}.\]
Taking the \emph{canonical inner product} in the algebra of symmetric functions, defined by
\[\left( s_\lambda,s_\mu \right)=\delta_{\lambda,\mu},\]
the map $\ch$ turns out to be an isometry, so that
\[\left\langle\chi^{\lambda},\ind_{S(l)}\chi^\mu\right\rangle_{S(l)}=
\left(\ch \chi^{\lambda},\ch\ind_{S(l)}\chi^\mu\right) =
\left(s_\lambda,s_\mu p_1^{l-k}\right). \]
If we restrict the symmetric functions to $n\geq l$ variables, we see by looking at the definition of $s_\lambda$ (see Section \ref{symmetricfunctions}) that we are simply looking for the coefficient of
\[x^{\lambda_1+n-1}_1 x^{\lambda_2+n-2}_2\cdots x_n^{\lambda_n}\]
in the expansion of
\begin{equation}\label{eq:polynomialinquestion}
(x_1+\cdots+x_n)^{l-k}\det \left(x_i^{\mu_j+n-j}\right)_{1\leq i,j\leq n}.
\end{equation}
By expanding \eqref{eq:polynomialinquestion}, we can rewrite it as
\[\sum_{s\in S(n)}\sgn(s)\sum_{r_1+\cdots+r_n=l-k}\frac{(l-k)!}{r_1!\cdots r_n!} \prod_{i=1}^n x_i^{\mu_{s(i)}+n-s(i)+r_i}\]
The coefficient we need is then
\begin{align*}
\sum_{s\in S(n)} & \sgn(s)\frac{(l-k)!}{\prod_{i=1}^n(\lambda_i-\mu_{s(i)}-i+s(i))!} \\
&=(l-k)!\det\left[\frac1{(\lambda_i-\mu_j-i+j)!}\right] \\
&=(l-k)!\det\left[\frac1{((\lambda_i+n-i)-(\mu_j+n-j))!} \right]\\
&=\frac{(l-k)!}{\prod_i(\lambda_i+n-i)!} \det\left[\frac{(\lambda_i+n-i)!}{((\lambda_i+n-i)-(\mu_j+n-j))!}\right] \\
& =\frac{(l-k)!}{l!} \cdot \frac{l!\prod_{p<q}(\lambda_p-\lambda_q+q-p)}{\prod_i(\lambda_i+n-i)!} \cdot \frac{\det[(\lambda_i+n-i\downharpoonright \mu_j+n-j)]} {\prod_{p<q}(\lambda_p-\lambda_q+q-p)} \\
&=\frac{\dim \lambda}{(l\downharpoonright k)}\,\s_\mu(\lambda).
\end{align*}
Here we used the definition of $\s_\mu$ and formula \eqref{eq:hookalternative} for $\dim \lambda$. This completes the proof of formula \eqref{eq:dimensionofskewdiagram}.

\section{Formula for characters of near-involutions}\label{sec:nearinvolutions}
Let $\lambda$ and $\nu$ be balanced partitions such that the Young diagram of $\nu$ is completely contained inside $\lambda$. We want to prove the following relation:
\begin{equation}\label{eq:nearinvolutionformula}
\frac{\chi^\lambda(\nu,2,2,\dots,2)}{\chi^\lambda(2,2,\dots,2)} = \frac{(|\lambda/\mu|/2)!}{(|\lambda|/2)!} \sum_{\mu}\sigma_\mu\chi^\mu(\nu) \,\s_a(\alpha)\,\s_b(\beta),
\end{equation}
where the sum is taken over all \emph{balanced} partitions $\mu$ of size $|\nu|$ whose Young diagram is completely contained in the Young diagram of $\lambda$, $(\alpha, \beta)$ and $(a,b)$ are the 2-quotients of $\lambda$ and $\mu$, respectively, and $\sigma_\mu$ is defined as in Section \ref{sec:classicaldominotableaux}.

To prove equation \eqref{eq:nearinvolutionformula}, we first observe that the Murnaghan-Nakayama rule (see Section \ref{murnaghannakayamarule}) implies that
\[\chi^\lambda(\nu,2,2,\dots,2) = \sum_{|\mu|=|\nu|}\chi^{\mu}(\nu)\chi^{\lambda/\mu}(2,2,\dots,2)\]
The sum is over all partitions $\mu$ of size $|\nu|$ whose diagram is completely contained inside the diagram of $\lambda$.
Clearly, $\chi^{\lambda/\mu}(2,2,\dots,2)$ vanishes unless $\mu$ is balanced, so all sums from this point on will be over balanced partitions $\mu$ of size $|\nu|$.
To this expression we apply formula \eqref{eq:skewdominoformula}, and we apply formula \eqref{eq:dominoformula} to the denominator, to get
\begin{equation*}
\frac{\chi^\lambda(\nu,2,2,\dots,2)}{\chi^\lambda(2,2,\dots,2)} =\sum_\mu B_\mu\,\sigma_\mu\,\chi^\mu(\nu) \,\frac{\dim(\alpha/a)}{\dim\alpha}\cdot\frac{\dim(\beta/b)}{\dim\beta}.
\end{equation*}
Here, $(\alpha,\beta)$ and $(a,b)$ are the 2-quotients of $\lambda$ and $\mu$, respectively, and \[B_\mu=\binom{|\lambda/\mu|/2}{|\alpha/a|}\Big/\binom{|\lambda|/2}{|\alpha|}.\] Now we can apply formula \eqref{eq:dimensionofskewdiagram} to each of the quotients of dimensions, and we get exactly formula \eqref{eq:nearinvolutionformula} because, by item \eqref{LemAlphaPlusBetaIsLambdaOverTwo} of Lemma \ref{LemTwoQuotients},
\[\frac{|\lambda|}2-|\alpha|=|\beta|\quad\textrm{and} \quad\frac{|\lambda/\mu|}{2}-|\alpha/a|=|\beta/b|.\]
\section[Quasimodularity of the expectations]{Quasimodularity of the expectations of shifted symmetric functions
on the 2-quotients}
\label{sec:quasimodularitytwoquotients}
In order for formula \eqref{eq:formulaforg} to be useful, we must prove that there is a reasonable way to compute the expectations of the terms involved. In this section we prove, in close parallel to the work of A. Eskin and A. Okounkov \cite{pillow}, that the expectations
\[\langle \s_a(\alpha)\s_b(\beta)\rangle_{\w,q}\]
are quasimodular forms. Here, the expectation is defined by
\[\langle f(\alpha,\beta)\rangle_{\w,q}=\sum_\lambda \frac{q^{|\lambda|}\w(\lambda)}{Z(q)}f(\alpha,\beta), \]
where sum is taken over all balanced partitions, $(\alpha,\beta)$ is the 2-quotient of $\lambda$, and
\[Z(q)=\sum_\lambda q^{|\lambda|}\w(\lambda).\]

It was proven by A. Okounkov and G. Olshanski \cite{okounkovolshanski} that
the algebra generated by the shifted Schur functions $\s_\mu$ coincides with the algebra generated by the shifted power functions $\p_\rho$, whose definition we recall: for a positive integer $k$,
\begin{equation}\label{eq:defofpone}
\p_k(\lambda)=\sum_{j=1}^\infty \left[\left(\lambda_j-j+\tfrac12\right)^k-\left(-j+\tfrac12\right)^k\right]+\left(1-\frac1{2^k}\right)\zeta(-k),
\end{equation}
where $\zeta$ denotes the Riemann zeta function. For general partitions $\rho$, the definition of $\p_\rho$ is
\begin{equation}\label{eq:defofptwo}
\p_\rho=\prod_i \p_{\rho_i}.
\end{equation}
So it is enough to prove that the expectations of the form
\[\langle \p_\mu(\alpha)\p_\eta(\beta)\rangle_{\w,q}\]
are quasimodular forms.

To do this, we will first look at the exponential generating function of these expectations, and then we will analyze this generating function.

We consider first the $(n,m)$-point function,
\begin{align*}
F(x_1,\dots,x_n;z_1,\dots,z_m)=
\sum_{k_1,\dots,k_n,\ell_1,\dots,\ell_m\in\Z+\frac12}
x_1^{k_1}\cdots x_n^{k_n} z_1^{\ell_1}\cdots z_m^{\ell_m}
\sum_\lambda \frac{ \w(\lambda)q^{|\lambda|}}{Z(q)},
\end{align*}
where the last sum is taken over all balanced partitions $\lambda$ with 2-quotient $(\alpha,\beta)$ such that
\[k_1,\dots,k_n\in\{\alpha_i-i+\frac12:i=1,2,\dots\}\]
and
\[\ell_1,\dots,\ell_m\in\{\beta_i-i+\frac12:i=1,2,\dots\}.\]
 Letting $u_1,\dots,u_n$ and $v_1,\dots,v_m$ be defined by $x_i=e^{u_i}$ and $z_i=e^{v_i}$, it is easy to verify that
 \begin{multline}\label{eq:generatingfunctionexpectationsps}
 F(e^{u_1},\dots,e^{u_n};e^{v_1},\dots,e^{v_m})=\\
 \sum_{i_1,\dots,i_n,j_1,\dots,j_m\in\mathbb Z_{\geq0}} \frac{u_1^{i_1}}{i_1!}\cdots \frac{u_n^{i_n}}{i_n!}\frac{v_1^{j_1}}{j_1!}\cdots \frac{v_m^{j_m}}{j_m!}
 \langle \p_{i_1}(\alpha)\cdots\p_{i_n}(\alpha)\p_{j_1}(\beta)\cdots\p_{j_m}(\beta)\rangle_{\w,q}.
\end{multline}
 This follows from the fact that the exponential generating function for the shifted power functions $\p_k(\mu)$ is (cf. \cite[eq. (0.18)]{blochokounkov}, \cite[eq. (2.8)]{branchedcoverings})
\[\sum_k\frac{x^k}{k!}\p_k(\mu)=\sum_{i} e^{\left(\mu_i-i+\frac12\right)x},\]
and we apply this identity separately for $\alpha$ and for $\beta$.
This identity is true for $x$ with positive real part; this must be taken into account when computing series expansions.

We define the \emph{Jacobi theta function} by
\[\vartheta(x)=\vartheta(x,q)=(q^{1/2}-q^{-1/2})\prod_{i=1}^\infty\frac{(1-q^ix)(1-q^i/x)}{(1-q^i)^2}.\]
(See Section \ref{jacobithetas} for a discussion.)

\begin{lem}\label{lem:formulafornmpointfunction}
The $(n,m)$-point function is given by
\begin{multline*}
F(x_1^2,\dots,x_n^2;z_1^2,\dots,z_m^2)=
\frac1{2^{n+m}}\sqrt{\frac{x_1\cdots x_n}{z_1\cdots z_m}}
\sum_{s,s',\sigma,\sigma'}\frac{\left(-\sqrt{-1}\right)^{\ell(s,s',\sigma,\sigma')}}
{\prod_i\vartheta\!\left(\frac{s_ix_i}{\sigma_i}\right)
\prod_j\vartheta\!\left(\frac{s_j'z_j}{\sigma_j'}\right)}\times\\
[y_1^0\cdots y_n^0w_1^0\cdots w_m^0]
\sqrt{\prod_{i}
\frac{\vartheta(-\sigma_iy_i)\vartheta(s_ix_iy_i)}
{\vartheta(\sigma_iy_i)\vartheta(-s_ix_iy_i)}
\prod_j
\frac{\vartheta(-\sigma'_j w_j)\vartheta(s'_jz_jw_j)}
{\vartheta(\sigma'_jw_j)\vartheta(-s'_jz_jw_j)}
}\,\times\\
\prod_{i<j}
\frac{
\vartheta\!\left(\frac{\sigma_iy_i}{\sigma_jy_j}\right)
\vartheta\!\left(\frac{s_ix_iy_i}{s_jx_jy_j}\right)
}
{
\vartheta\!\left(\frac{s_ix_iy_i}{\sigma_jy_j}\right)
\vartheta\!\left(\frac{\sigma_iy_i}{s_jx_jy_j}\right)
}
\prod_{i<j}
\frac{
\vartheta\!\left(\frac{\sigma'_iw_i}{\sigma'_jw_j}\right)
\vartheta\!\left(\frac{s'_iz_iw_i}{s'_jz_jw_j}\right)
}
{
\vartheta\!\left(\frac{s'_iz_iw_i}{\sigma'_jw_j}\right)
\vartheta\!\left(\frac{\sigma'_iw_i}{s'_jz_jw_j}\right)
}
\prod_{i,j}
\frac{
\vartheta\!\left(\frac{s_ix_iy_i}{s_j'z_jw_j}\right)
\vartheta\!\left(\frac{\sigma_iy_i}{\sigma_j'w_j}\right)
}
{
\vartheta\!\left(\frac{\sigma_iy_i}{s'_jz_jw_j}\right)
\vartheta\!\left(\frac{s_ix_iy_i}{\sigma_j'w_j}\right)
},
\end{multline*}
where the sum is taken over all functions
\[
\begin{array}{c}
s,\sigma:\{1,2,\dots,n\}\to\{+1,-1\},\\
s',\sigma':\{1,2,\dots,m\}\to\{+1,-1\},
\end{array}
\]
and \[\ell(s,s',\sigma,\sigma')=|s^{-1}(-1)|+|(s')^{-1}(-1)|+|\sigma^{-1}(-1)|+|(\sigma')^{-1}(-1)|\]
is the number of points at which these functions equal $-1$.
The expansion is valid in the domain
\begin{multline*}
|y_n/q|>|z_1w_1|>|w_1|>\cdots>|z_mw_m|>|w_m|>\\
|x_1y_1|>|y_1|>\cdots>|x_ny_n|>|y_n|>1.
\end{multline*}
\end{lem}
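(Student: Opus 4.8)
The plan is to follow the strategy of Eskin--Okounkov for the pillowcase $n$-point function, but carried out for the 2-quotient variables $(\alpha,\beta)$ rather than for $\lambda$ directly. The starting point is to recognize that the pillowcase weight $\w(\lambda)$ restricted to balanced $\lambda$ factors (up to the normalization $Z(q)$ and a power of $q$) through data attached to the 2-quotient: by the hook formula of Section~\ref{sec:wformulahooks} together with item~\eqref{LemQuotientHooksAreHalvesOfEvenHooks} of Lemma~\ref{LemTwoQuotients}, the even/odd hook lengths of $\lambda$ are governed by the hook lengths of $\alpha$ and $\beta$, and $|\lambda|=2(|\alpha|+|\beta|)$. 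Thus the sum over balanced $\lambda$ becomes, in effect, a sum over pairs of partitions, and the expectation $F(x_1^2,\dots;z_1^2,\dots)$ can be rewritten as a trace of vertex operators acting on a \emph{tensor product} of two fermionic Fock spaces, one for $\alpha$ and one for $\beta$, with the two copies coupled through the interleaving of the Maya diagram of $\lambda$ into the Maya diagrams of $\alpha$ and $\beta$ (Section~\ref{twoquotientssection}). The squared variables $x_i^2,z_j^2$ and the prefactor $\sqrt{x_1\cdots x_n/z_1\cdots z_m}$ are exactly the bookkeeping for this interleaving: a site $2n\pm\tfrac12$ of $\lambda$ contributes a half-integer shift to the $\alpha$- or $\beta$-sector.

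Concretely, I would first express the generating series $\sum_k \frac{x^k}{k!}\p_k(\alpha) = \sum_i e^{(\alpha_i-i+1/2)x}$ (the identity quoted after \eqref{eq:generatingfunctionexpectationsps}) as a fermionic bilinear $\sum_i x^{\alpha_i-i+1/2} = \tr_{\alpha}\big(\psi(x)\psi^*(x)\,q^{L_0}\cdots\big)$ up to normal-ordering corrections, and similarly for $\beta$; the product over the $n$ insertions in the $\alpha$-sector and $m$ in the $\beta$-sector produces, by Wick's theorem, the two ``square-block'' products $\prod_{i<j}\tfrac{\vartheta(\sigma_i y_i/\sigma_j y_j)\vartheta(s_ix_iy_i/s_jx_jy_j)}{\vartheta(s_ix_iy_i/\sigma_jy_j)\vartheta(\sigma_iy_i/s_jx_jy_j)}$ and its primed analogue, while the coupling between the two Fock spaces through the 2-quotient construction produces the mixed product $\prod_{i,j}(\cdots)$. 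The auxiliary variables $y_i,w_j$ and the operation $[y_1^0\cdots w_m^0]$ implement the charge-zero (diagonal) projection onto the two-component Fock vacuum; the square roots come, just as in \eqref{quotientofthetas}, from the half-integer grading and the need to take a square root of the operator that counts even versus odd hooks —- this is where the $\w=(\prod\mathrm{odd}/\prod\mathrm{even})^2$ shape enters, contributing a factor whose square root is the single-variable product $\sqrt{\prod_i \vartheta(-\sigma_iy_i)\vartheta(s_ix_iy_i)/\vartheta(\sigma_iy_i)\vartheta(-s_ix_iy_i)}$. The sum over the sign functions $s,\sigma,s',\sigma'$ with weight $(-\sqrt{-1})^{\ell}$ arises because, after taking square roots, one must symmetrize over the four possible branches at each insertion —- exactly the theta-function functional equation $\vartheta(x^{-1})=-\vartheta(x)$ and $\vartheta(-x)$-type reflections force this $2^{n+m}$-fold sum, with the power of $\sqrt{-1}$ tracking the accumulated signs, and the overall $2^{-(n+m)}$ normalizing the branch average.

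For the computation itself I would compute the correlator on the torus (i.e.\ with the $q^{L_0}$ insertion) by the standard bosonization / free-fermion propagator on the cylinder, whose building block is precisely $\vartheta(x,q)$ after the infinite-product regularization given in the statement; the specified expansion domain $|y_n/q|>|z_1w_1|>\cdots>1$ is dictated by the ordering of the fermion insertions needed for the geometric series defining each propagator to converge, and I would verify it is consistent with the radial ordering used in each Wick contraction. The main obstacle I anticipate is the careful treatment of the \emph{coupling} of the two Fock spaces and the resulting square-root and sign structure: unlike the single-partition pillowcase computation, here one must track how a Maya-diagram site of $\lambda$ splits across the two sublattices $2\Z+\tfrac12$ and $2\Z-\tfrac12$, keep the zero-mode (charge) shifts consistent between the $\alpha$- and $\beta$-sectors (the ``balanced'' condition forces total charge zero, but each sector can carry a compensating nonzero charge, which is what the $y,w$ contour integrals sum over), and check that the regularized infinite products assemble into exactly the $\vartheta$-quotients written, with no leftover $q$-power or constant. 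Establishing this bookkeeping rigorously —- essentially a commutation-relation computation in a two-component fermionic Fock space together with a branch-of-square-root analysis —- is the technical heart; once it is in place, the formula drops out by collecting terms.
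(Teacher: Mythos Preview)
Your two-Fock-space picture is the wrong architecture, and the origins you assign to the sign sum and the square roots are off. In the paper the whole computation lives on a \emph{single} Fock space $\Lambda^{\frac\infty2}_0$. The pillowcase weight enters not via the hook formula but via the Eskin--Okounkov theorem that the diagonal matrix element of the operator
\[
\pcop=\exp\Big(\sum_{n>0}\tfrac{\alpha_{-2n-1}}{2n+1}\Big)\exp\Big(-\sum_{n>0}\tfrac{\alpha_{2n+1}}{2n+1}\Big)
\]
on $v_\lambda$ equals $\w(\lambda)$. The 2-quotient is accessed by the \emph{parity-projected} fermions $\psi_0(z)=\sum_{k\in2\Z+\frac12}z^k\psi_k$ and $\psi_1(z)=\sum_{k\in2\Z-\frac12}z^k\psi_k$, so that $[x^ky^0]\psi_0(xy)\psi_0^*(y)$ and $[x^ky^0]\psi_1(xy)\psi_1^*(y)$ test membership of $k$ in the modified Frobenius sets of $\alpha$ and $\beta$ respectively. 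The sign sum over $s,\sigma,s',\sigma'$ with weight $(-\sqrt{-1})^\ell/2^{n+m}$ is then nothing mysterious: it is literally the expansion of
\[
\psi_0(z)=\tfrac12\big(\psi(z)-\sqrt{-1}\,\psi(-z)\big),\qquad \psi_1(z)=\tfrac12\big(\psi(z)+\sqrt{-1}\,\psi(-z)\big),
\]
and the starred analogues, applied to each of the $n+m$ pairs of insertions; no branch-of-square-root symmetrization is involved. After this, each summand is a trace of $q^H\pcop$ against ordinary $\psi(\pm xy)\psi^*(\pm y)$'s, and one bosonizes via the standard $\psi(xy)\psi^*(y)=(x^{1/2}-x^{-1/2})^{-1}\exp(\cdots\alpha_{-n})\exp(\cdots\alpha_n)$, then computes the trace mode by mode using $\trace\,e^{R\alpha_{-n}}e^{S\alpha_n}\big|_{\oplus_k\alpha_{-n}^kv_\emptyset}=(1-q^n)^{-1}\exp\frac{nRSq^n}{1-q^n}$. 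The square roots in the answer come from the fact that $\pcop$ involves only \emph{odd} modes, so in the product over $n$ the $\delta_{\text{$n$ odd}}$ terms contribute the series for $\log\sqrt{(1-x)/(1+x)}$ rather than $\log(1-x)$; the cross products $\prod_{i,j}(\cdots)$ are just the Wick contractions between the $\psi_0$-type and $\psi_1$-type insertions in the same Fock space, not a coupling between two tensor factors.

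The gap in your plan is that $\w(\lambda)$ does \emph{not} factor as $f(\alpha)g(\beta)$---the odd hooks of $\lambda$ mix the two components---so there is no evident operator on $\Lambda^{\frac\infty2}\otimes\Lambda^{\frac\infty2}$ whose diagonal elements reproduce $\w$, and you never say what it would be. Without that, the ``coupling'' you invoke is just a name for the hard part. The paper's single-Fock-space route with $\pcop$ and the projected fermions $\psi_0,\psi_1$ is what makes the computation go through; once you adopt it, the sign sum, the square roots, and the theta quotients all fall out mechanically from the mode-by-mode trace.
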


\begin{proof}
In the context of the Fock space representation $\Lambda^{\frac{\infty}2}_0$ (see Section \ref{fock} for definitions and notations), we consider the vertex operator
\[\pcop =\exp\left(\sum_{n>0}\frac{\alpha_{-2n-1}}{2n+1}\right) \exp\left(-\sum_{n>0}\frac{\alpha_{2n+1}}{2n+1}\right).\]
In the paper \cite{pillow}, $\pcop$ is called the \emph{pillowcase operator} because we have
\begin{thm}[\protect{\cite[Theorem 4]{pillow}}]
The diagonal matrix elements of $\pcop$ are as follows:
\[\langle\pcop \,v_\lambda,v_\lambda\rangle=
\w(\lambda).\]
\end{thm}
We omit the proof of this theorem. The form of $\pcop$ was probably guessed using the tools developed by A. Okounkov and N. Reshetikhin \cite[Section 2.2.6]{okounkovreshetikhin}.

We define the operators
\[\psi_0(z)=\sum_{k\in2\Z+\frac12}z^k\psi_k,\quad \psi^*_0(z)=\sum_{k\in2\Z+\frac12}z^{-k}\psi^*_k,\]
and
\[\psi_1(z)=\sum_{k\in2\Z-\frac12}z^k\psi_k,\quad \psi^*_1(z)=\sum_{k\in2\Z-\frac12}z^{-k}\psi^*_k.\]
Note that, if $(\alpha,\beta)$ is the 2-quotient of the partition $\lambda$,
\[[x^ky^0]\psi_0(xy)\psi^*_0(y)v_\lambda=\left\{\begin{array}{ll}
v_\lambda, &\textrm{if $k \in \{2(\alpha_i-i+\frac12)+\frac12:i=1,2,\dots\}$},\\
0, & \textrm{otherwise.}
\end{array}\right.\]
where $[x^ky^l]$ stands for the operation of expanding the series with respect to $x$ and $y$ and finding the coefficient of $x^ky^l$. Similarly,
\[[x^ky^0]\psi_1(xy)\psi^*_1(y)v_\lambda=\left\{\begin{array}{ll}
v_\lambda, &\textrm{if $ k \in \{2(\beta_i-i+\frac12)-\frac12:i=1,2,\dots\}$,}\\
0, & \textrm{otherwise.}
\end{array}\right.\]
Define $Hv_\lambda=|\lambda|v_\lambda$.
Then $F(x_1^2,\dots,x_n^2;z_1^2,\dots,z_m^2)$ can be expressed as the following trace in $\Lambda^{\frac\infty2}_0$:
\begin{multline*}
F(x_1^2,\dots,x_n^2;z_1^2,\dots,z_m^2)=
\frac1{Z(q)}\sqrt{\frac{x_1\cdots x_n}{z_1\cdots z_m}}\,\times\\
[y_1^0\cdots y_n^0 w_1^0 \cdots w_m^0]\trace \!\big(q^H\pcop\,
\psi_0(x_1y_1)\psi^*_0(y_1)\cdots\psi_0(x_ny_n)\psi^*_0(y_n)
\\\times
\psi_1(x_1w_1)\psi^*_1(w_1)\cdots\psi_1(x_nw_m)\psi^*_1(w_m)\big).
\end{multline*}

In terms of the usual operators
\[\psi(z)=\sum_{k\in\Z+\frac12}z^k\psi_k,\quad
\psi^*(z)=\sum_{k\in\Z+\frac12}z^{-k}\psi^*_k,\]
we have
\begin{align*}
&\psi_0(z)=\frac12\left(\psi(z)-\sqrt{-1}\,\psi(-z)\right) , &&\psi^*_0(z)=\frac12\left(\psi^*(z)-\sqrt{-1}\,\psi^*(-z)\right),\\
&\psi_1(z)=\frac12\left(\psi(z)+\sqrt{-1}\,\psi(-z)\right) , &&\psi^*_1(z)=\frac12\left(\psi^*(z)+\sqrt{-1}\,\psi^*(-z)\right).
\end{align*}

Thus $F(x_1^2,\dots,x_n^2;z_1^2,\dots,z_m^2)$ is a sum of terms of the form
\begin{equation*}
\frac {\left(-\sqrt{-1}\right)^\ell}{2^{n+m}Z(q)}\sqrt{\frac{x_1\cdots x_n}{z_1\cdots z_m}}\,
[y_1^0\cdots y_n^0 w_1^0 \cdots w_m^0]\trace \!\big(q^H\pcop\,
A_1A_1^*\cdots A_nA_n^*B_1B_1^*\cdots B_mB_m^*\big)
\end{equation*}
where
\begin{align*}
&A_i=\psi(s_ix_iy_i),  &&A_i^*=\psi^*(\sigma_iy_i),\\
&B_i=\psi(s'_iz_iw_i), &&B_i^*=\psi^*(\sigma'_iw_i),
\end{align*}
$s_i,s_i',\sigma_i,\sigma_i'\in\{+1,-1\}$, and $\ell$ is the number of them that equals $-1$.
From the boson-fermion correspondence, we have the following formula \cite[Theorem 14.10]{kac}:
\begin{multline*}
\psi(xy)\psi^*(y)=\frac{1}{x^{1/2}-x^{-1/2}}\times\\
\exp\left(\sum_{n>0}\frac{(xy)^n-y^n}n\,\alpha_{-n}\right)
\exp\left(\sum_{n>0}\frac{y^{-n}-(xy)^{-n}}n\,\alpha_n\right).
\end{multline*}
Note that the factor $(x^{1/2}-x^{-1/2})^{-1}$ at the front can be checked comparing the coefficients of the vacuum vector $v_{\emptyset}$ on both sides of the identity when the operator is applied to $v_\emptyset$. Adjusting the signs of $x$ and $y$, we have all products $A_iA_i^*$ and $B_iB_i^*$ covered with this formula.

Now, these operators factor through the decomposition
\[\Lambda^{\frac\infty2}_0V=\bigotimes_n\bigoplus_k\alpha_{-n}^kv_\emptyset,\]
and in each of these factors we have, for all $R,S\in\C$,
\begin{equation}\label{eq:simplificationidentity}
\trace \left.e^{R\alpha_{-n}}e^{S\alpha_n} \right|_{\bigoplus_{k=1}^\infty\alpha^k_{-n}v_\emptyset}
=\frac1{1-q^n}\exp\frac{nRSq^n}{1-q^n}.
\end{equation}
This identity can be easily checked directly by expanding both sides. In our case,
\begin{align*}
R&=\frac1n\left(\delta_{\textrm{$n$ odd}} +\sum_{i=1}^n \left[(s_ix_iy_i)^n-(\sigma_iy_i)^n\right]+\sum_{i=1}^m \left[(s'_iz_iw_i)^n-(\sigma'_iw_i)^n\right]\right),\\
S&=\frac1n\left(-\delta_{\textrm{$n$ odd}}+\sum_{i=1}^n
\left[(\sigma_iy_i)^{-n}-(s_ix_iy_i)^{-n}\right]+\sum_{i=1}^m \left[(\sigma'_iw_i)^{-n}-(s'_iz_iw_i)^{-n}\right]\right).
\end{align*}

We take the product of \eqref{eq:simplificationidentity} for all $n$, and expand the series of the factor $(1-q^n)^{-1}$ in the exponent of the right hand side of equation \eqref{eq:simplificationidentity}. Then we exchange the order of summation, and use the series
\[\log(1-x)=-\sum_{k=1}^\infty\frac{x^k}{k}, \qquad \log\sqrt{\frac{1-x}{1+x}}=-\sum_{k=1}^\infty\frac{x^{2k-1}}{2k-1},\]
together with the definition of the theta function $\vartheta$ (see Section \ref{jacobithetas}) and the formula \cite[Section 3.2.4]{pillow}
\[Z(q)=\trace q^H\pcop=\prod_{i=1}^\infty\left(1-q^{2i}\right)^{-1/2}\]
to get the result of the statement of the lemma.

The domain of the expansion is justified in the same way as in \cite[Section 3.2.3]{pillow}: we have the estimate
\[(\pcop v_\lambda,v_\mu)\sim \textrm{const}\,\max(|\lambda|,|\mu|),\]
so the trace converges in the given domain.
\end{proof}

We proceed to complete the proof of the quasimodularity of the coefficients of the generating function $F(e^{u_1},\dots,e^{u_n};e^{v_1},\dots,e^{v_m})$.

In order to obtain the coefficient of $y_1^0\cdots y_n^0w_1^0\cdots w_m^0$ in the formula for $F$ given in Lemma \ref{lem:formulafornmpointfunction}, we want to take the integral
\begin{multline*}
\frac{1}{(2\pi i)^{m+n}}\oint_{|y_1|=c}\frac{dy_1}{y_1}\cdots\oint_{|y_n|=c}\frac{dy_n}{y_n}\times\\
\oint_{|w_1|=c}\frac{dw_1}{w_1}\cdots\oint_{|w_n|=c}\frac{dw_m}{w_m} \, F(e^{u_1},\dots,e^{u_n};e^{v_1},\dots,e^{v_m}).
\end{multline*}
To do this we introduce variables $a_i=\sqrt{-1}\log y_i$ and $b_i=\sqrt{-1}\log w_i$, which changes the domain of integration to the interval $[0,2\pi]$ on each of the variables. Then we plug in the identities
\begin{align*}
\log\frac z{\vartheta(e^z)}&=2\sum_{k\geq1}\frac{z^{2k}}{(2k)!}E_{2k}(q), \\
\log\frac{\vartheta(-e^{z})}{\vartheta(-1)}&=2\sum_{k\geq1}\frac{z^{2k}}{(2k)!}
\left[E_{2k}(q)-2^{2k}E_{2k}(q^2)\right],\\
\vartheta(-1)&=2i\left(\prod_{n\geq1}\frac{1+q^n}{1-q^n}\right)^2
=\frac{\eta(q^2)^2}{\eta(q)^4},
\end{align*}
where $\eta$ is the Dedekind eta function
\[\eta(q)=q^{1/24}\prod_{i\geq1}(1-q^i)\]
and $E_{2k}$ are the Eisentstein series
\[E_{2k}(q)=\frac{\zeta(1-2k)}2+\sum_{n=1}^\infty\left(\sum_{d|n}d^{2k-1}\right)q^n,\quad k=1,2,\dots,\]
into the formula of Lemma \ref{lem:formulafornmpointfunction}. These identities are easy to check directly.
Finally, we expand the series with respect to the variables $a_i,b_i, u_i,v_i$ and we integrate term by term.

What we get in the end is a series in the variables $u_i$ and $v_i$ whose coefficients are polynomials in the Eisenstein series $E_{2k}$. This means that the coefficients are quasimodular forms (see Section \ref{quasimodularforms}). Since this series is precisely the generating function \eqref{eq:generatingfunctionexpectationsps}, we are done.

\chapter{Analysis of the pillowcase distribution}
\label{chap:pillowcase}
In this chapter, we first find the limit shape induced by the pillowcase distribution and we analyze whether convergence is normal. This is the content of Section \ref{sec:limitshape}.

Then, we prove that the pillowcase distribution concentrates most of its measure near the partitions that have very similar 2-quotients. This requires us first to find an alternative formula for $\w(\lambda)$ in terms of hooks; we do this in Section \ref{sec:wformulahooks}. Then we do a variational argument in Section \ref{sec:variational} to obtain asymptotics for $\w(\lambda)$. We show in Section \ref{sec:meinardus} that the $\w$-probability of the set of partitions whose 2-quotients significantly differ vanishes asymptotically.

Finally, as an application of these results, in Sections \ref{sec:expectationasympt} and \ref{sec:nextterm} we analyze the first terms of the asymptotics of the expectation of $\g_\nu$ arising from formula \eqref{eq:formulaforg}.

\section{The limit shape}\label{sec:limitshape}
In this section we argue that the limit shape induced by the pillowcase weights coincides with the limit shape induced by the uniform distribution (see Section \ref{limitshapeuniform}). We will also make some comments on the multiplicativity of the mean, and on the possibility of a Central Limit Theorem, at the end of this section.

\paragraph{Moments of the limit shape.}

Let $\Omega:\R_+\to\R_+$ be a non-increasing continuous function such that
\[\int_{\R_+}\Omega(x)\,dx=1.\]
Let $\lambda^n$, $n=1,2,\dots$, be a sequence of partitions such that $|\lambda^n|\to\infty $ as $n\to\infty$.
We say that $\lambda^n$ \emph{converges} to $\Omega$ if
\[\limsup_{n\to\infty}\max_j\left|\frac1{\sqrt n}\,\lambda^n_j-\Omega\!\left(\frac j{\sqrt n}\right)\right|=0.\]

Let  $L_n$ be the contour of $\lambda^n$; this was defined in Section \ref{twoquotientssection}. Let $L_\infty:\R\to\R$ be the function whose graph corresponds to the graph of $\Omega$ rotated $45^\circ$ in the counter-clockwise direction (or $135^\circ$ if we think of $\Omega$ drawn upside down, resembling a very large partition). Then if $\lambda^n$ converge to $\Omega$, we also have that $L_n$ converge to $L^\infty$ in the topology of the supremum norm.

Recall that the shifted power functions $\p_k$ were defined in equation \eqref{eq:defofpone}. We want to give an interpretation of them as moments of the contours. From the definition of $\p_k$ it is clear that as $n\to\infty$,
\[\left|n^{-\frac{k+1}2}\p_k(\lambda^n)
-\int_{\R_+}\left(\left(\Omega(x)-x\right)^k-x^k\right)dx\right|\to0.\]
Now,
\begin{align*}
\int_{\R_+}&\left(\left(\Omega(x)-x\right)^k-x^k\right)dx
=\int_0^\infty\int_0^{\Omega(x)}\frac{d}{dy}(y-x)^k\,dy\,dx\\
&=\int_0^\infty\int_0^{\Omega(x)}k(y-x)^{k-1}\,dy\,dx=
\int_{-\infty}^\infty\int_{|s|}^{L_\infty(s)}ks^{k-1}\,dt\,ds\\
&=k\int_{-\infty}^\infty s^{k-1}\left(L_\infty(s)-|s|\right)\,ds
\end{align*}
where $s=y-x$ and $t=y+x$. The change of variables is done as per the following diagram, in which the curve $\Gamma$ depicts both the graph of $\Omega(x)$ and the graph of $L_\infty(s)$, and the integration domain is shaded in gray.

\begin{center}
\includegraphics{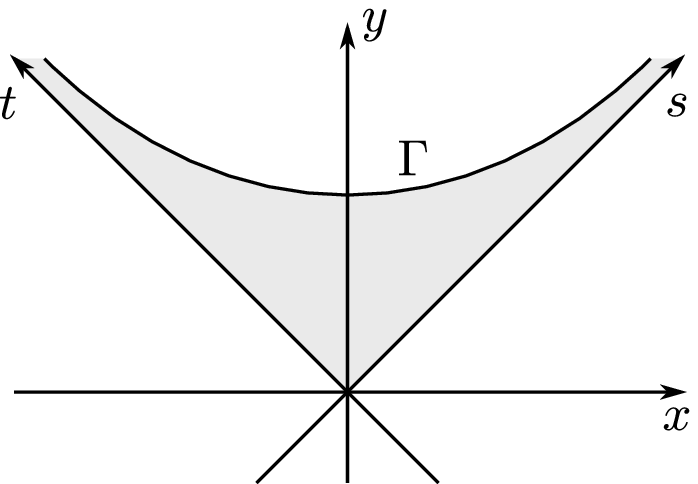}
\end{center}

In other words, if we consider $L(x)-|x|$ as a density with moments $\mu_n$, then
\[\p_k(\lambda^n)\to k\mu_{k-1}\quad\textrm{as}\quad n\to\infty.\]

\paragraph{Candidate for the limit shape.}

In order to determine what the candidate for the limit shape is, we will show that, in the $q\to1$ limit, the expectations of the shifted power functions
\begin{equation*}
\langle\p_k\rangle_{\w,q}
\end{equation*}
coincide with those of the uniform distribution. The functions $\p_k$ play the role of the moments of the limit shape, and because these moments $\mu_k$ do not grow very fast, they in fact determine it uniquely. A sufficient condition for this to be the case (see for example \cite[Chapter 30]{billingsley}) is that
\[\limsup_{k\to\infty}\frac{1}{2k}(\mu_{2k})^{\frac1{2k}}<\infty,\]
which is true in this case.

In the case of the uniform distribution, in which the weight of the partition $\lambda$ is $q^{|\lambda|}$, the rescaled limit of the exponential generating function of the numbers $\langle \p_k\rangle_{\mathrm{unif},q}$  is
\begin{equation}\label{eq:asymptoticsuniform}
F_{\mathrm{unif}}(e^{hu})\approx\frac{\pi}{h\sin\pi u},
\end{equation}
where $q=e^{-h}$, $h\to+0$, and $\approx$ means ``up to exponentially small terms''; compare with formula \eqref{eq:generatingfunctionexpectationsps}.
To prove that this is true, note that (in the notations of Sections \ref{fock} and \ref{sec:quasimodularitytwoquotients}, and using similar techniques)
\[F_{\mathrm{unif}}(x)=\frac{1}{Z}[y^0]\trace q^H\psi(xy)\psi^*(y)=\frac{1}{\vartheta(x)}.\]
Here, $\vartheta$ stands for the Jacobi theta function, whose defintion we recall:
 \[\vartheta(x)=\vartheta(x,q)=\left(q^{1/2}-q^{-1/2}\right)\prod_{i=1}^\infty \frac{(1-q^ix)(1-q^i/x)}{(1-q^i)^2}\]
 (see also Section \ref{jacobithetas}).
Thus the approximation \eqref{eq:asymptoticsuniform} follows from

\begin{lem}[\protect{\cite[Proposition 4.1]{branchedcoverings}}]
\label{lem:asymptoticsoftheta}
We have
\[\frac{\vartheta(e^{hu},e^{-h})}{\vartheta'(0,e^{-h})}=h\frac{\sin(\pi u)}{\pi}\exp\left(\frac{hu^2}{2}\right)\left(1+O\left(e^{-\frac{4\pi^2}{h}}\right)\right)\]
as $h\to+0$ uniformly in $z$. This asymptotic relation can be differentiated any number of times.
\end{lem}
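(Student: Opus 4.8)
The plan is to prove this by analyzing the product defining $\vartheta$ directly and invoking the modular transformation of the Dedekind eta function. First I would write $q = e^{-h}$ and recall that $\vartheta(x,q) = (q^{1/2}-q^{-1/2})\prod_{i\geq 1}\frac{(1-q^ix)(1-q^i/x)}{(1-q^i)^2}$. Setting $x = e^{hu}$ and combining the $q$-powers with the variable, one recognizes the numerator as essentially a ratio of Jacobi triple-product-type expressions; indeed $\vartheta(e^{hu},e^{-h})$ can be rewritten, up to an elementary factor, as $\prod_{i\geq 0}(1-e^{-h(i+1-u)})\prod_{i\geq 1}(1-e^{-h(i+u)}) / \prod_{i\geq 1}(1-e^{-hi})^2$. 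The key analytic input is the classical modular transformation for $\eta$: $\eta(e^{-h}) = \sqrt{2\pi/h}\; e^{-\pi^2/(6h)}\,\eta(e^{-4\pi^2/h})$, and more generally the analogous transformation law for the full theta function, which converts a product over a slowly-converging lattice (as $h\to 0$) into one over a rapidly-converging lattice, with error $O(e^{-4\pi^2/h})$.

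Next I would carry out the following steps in order. (1) Express the ratio $\vartheta(e^{hu},e^{-h})/\vartheta'(0,e^{-h})$ purely in terms of $\eta$ and of a single theta function evaluated at argument $u$; the derivative $\vartheta'(0,q)$ is itself $\pm 2\eta(q)^3$ up to normalization, so both numerator and denominator are built from eta-type products. (2) Apply the modular transformation $h \mapsto 4\pi^2/h$ to each factor. Under this transformation the leading behavior of $\vartheta(e^{hu},e^{-h})$ acquires the Gaussian prefactor $\exp(hu^2/2)$ (this is the standard quadratic term coming from the $e^{\pi i \tau w^2}$ factor in the theta transformation law) together with a factor $\sin(\pi u)$ coming from the lowest term $(1 - e^{-4\pi^2 u/h})$-type contributions collapsing — more precisely, on the transformed side the dominant term of the numerator is $2\sin(\pi u)$ times a product that converges to $1$ with error $O(e^{-4\pi^2/h})$, while the $h$ appears from the $\sqrt{2\pi/h}$ factors in the eta transformations not fully cancelling. (3) Collect the powers of $h$, the powers of $e^{-\pi^2/(6h)}$ (which cancel between numerator and denominator), and verify the constant $\pi$ in the denominator, obtaining exactly $h\sin(\pi u)/\pi \cdot \exp(hu^2/2)(1 + O(e^{-4\pi^2/h}))$. (4) For the final sentence, observe that the error term $O(e^{-4\pi^2/h})$ and the stated main term are both real-analytic in $u$ in a neighborhood of any fixed real $u$, uniformly for $h$ small, so differentiation under the asymptotic is justified by Cauchy's estimates on a small complex disk around $u$.

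The main obstacle I expect is bookkeeping rather than conceptual: correctly tracking all the elementary prefactors (the $q^{1/2}-q^{-1/2}$ out front, the normalization of $\vartheta'(0,q)$, the $q^{1/24}$ in $\eta$, and the various $u$-dependent shifts in the exponents) through the modular transformation so that the constant comes out to be precisely $\pi$ and the Gaussian exponent precisely $hu^2/2$ with no stray factors of $2$ or $i$. A cleaner route, which I would actually prefer, is to cite the transformation law for the Jacobi theta function $\theta_1(z\mid\tau)$ in the form $\theta_1(z\mid\tau) = (-i\tau)^{-1/2} e^{z^2/(i\pi\tau)}\,\theta_1(z/\tau \mid -1/\tau)$, identify $\vartheta(e^{hu},e^{-h})$ with $\theta_1$ at $\tau = ih/(2\pi)$ and $z = \pi u$ up to an explicit elementary factor, and then simply read off the $h\to 0$ behavior from the $q$-expansion of $\theta_1$ on the transformed side where $q' = e^{-4\pi^2/h}\to 0$; in that expansion only the $n=0,1$ terms survive to all polynomial orders, immediately giving the $\sin(\pi u)$ and the claimed error. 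With this approach the proof reduces to one substitution and one appeal to the standard $\theta_1$ functional equation, and the differentiability claim follows as in step (4). Since this is exactly Proposition 4.1 of \cite{branchedcoverings}, I would present it in this streamlined form and refer there for the elementary verifications.
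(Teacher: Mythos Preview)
Your preferred ``cleaner route'' at the end is exactly the paper's proof: apply the modular transformation of $\vartheta$ (equivalently $\theta_1$) to pass to the nome $e^{-4\pi^2/h}$, picking up the Gaussian factor $\exp(hu^2/2)$ and the $\sqrt{2\pi/h}$ prefactor, then expand the additive series on the transformed side and observe that only the $n=0$ and $n=-1$ terms survive, yielding $\sin(\pi u)$ with error $O(e^{-4\pi^2/h})$. Your first approach via the product form and repeated $\eta$-transformations would also reach the answer but, as you yourself anticipate, is needlessly heavy on bookkeeping compared to the one-line application of the $\vartheta$ functional equation.
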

\begin{proof}
We have
\[\vartheta(e^{hu},e^{-h})=i\sqrt{\frac{2\pi}{h}}\exp\left(\frac{hu^2}{2}\right)
\vartheta\left(e^{-2\pi i u},e^{-\frac{4\pi^2}{h}}\right),\]
so, expanding the series of $\vartheta$,
\[\vartheta\left(e^{-2\pi i u},e^{-\frac{4\pi^2}{h}}\right)=\sum_{n\in\Z}(-1)^n\exp
\left(-\frac{2\pi^2\left(n+\frac12\right)^2}h\right)e^{-2\pi\left(n+\frac12\right)u}.\]
From here it is clear that as $h\to+0$ the terms $n=0$ and $n=-1$ dominate all others.
\end{proof}

We will show that we have the same result \eqref{eq:asymptoticsuniform} holds for the rescaled limit of the exponential generating function of the numbers $\langle \p_k\rangle_{\w,q}$, that is, in the case of the pillowcase weights.

Recall the formula of A. Okounkov and A. Eskin \cite[Theorem 5]{pillow} for the 1-point function,
\[
F(x)=\frac{1}{\vartheta(x)}[y^0]
\sqrt{\frac{\vartheta(-y)\vartheta( xy)}{\vartheta(y)\vartheta(- xy)}}.
\]
Here $[y^0]$ means that we expand the series in the variable $y$ and then we take the constant coefficient.

Since (in the conventional notations which are recalled in Section \ref{jacobithetas}),
\[\vartheta(z,\tau)=-\eta^{-3}(\tau)\vartheta_{11}(z,\tau),\]
where $x=e^{2\pi i z}$, $q=e^{2\pi i \tau}$, and $\eta$ stands for the Dedekind eta function, the following lemma about $\vartheta_{11}$ is relevant.

\begin{lem}\label{lem:thetaident}
We have the following identities and approximations for $\vartheta_{11}$ at $x=1$ and $x=-1$, respectively:
 \begin{align*}
 \vartheta_{11}(z,\tau)&=-(-i\tau)^{-1/2}\sum_{n\in\Z}
 e^{\pi i\left(n+\frac12\right)}\exp-\frac{\pi i}\tau\left(n+\tfrac12-z\right)^2\\
 &\approx-(-i\tau)^{-1/2} e^{\pi i \left(\round{z-\frac12}+\tfrac12\right)}
  \exp-\frac{\pi i}\tau\fractional{z-\tfrac12}^2,\\
 \vartheta_{11}\!\left(z+\tfrac12,\tau\right)&=-(-i\tau)^{-1/2}\sum_{n\in\Z}
 \exp-\frac{\pi i}\tau\left(n+\tfrac12-z\right)^2\\
 &\approx-(-i\tau)^{-1/2}\exp-\frac{\pi i}{\tau}\fractional{z-\tfrac12}^2,
 \end{align*}
 where $\round{x}$ stands for the integer closest to $x$, $\fractional{x}=x-\round x$, and $\approx$ means ``up to exponentially small terms.''
\end{lem}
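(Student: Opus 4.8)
The plan is to obtain both exact identities from Jacobi's imaginary transformation and then read off the two approximations by letting $\tau$ approach the cusp. Recall from Section~\ref{jacobithetas} that $\vartheta_{11}(z,\tau)$ is defined by a Gaussian series in $q=e^{2\pi i\tau}$ whose $n$-th term decays like $|q|^{(n+1/2)^{2}}$, hence very slowly as $q\to 1$, i.e.\ as $\tau\to 0$. Applying Poisson summation to that series --- equivalently, invoking Jacobi's imaginary transformation $\vartheta_{11}(z,\tau)=-i\,(-i\tau)^{-1/2}e^{-\pi i z^{2}/\tau}\,\vartheta_{11}(z/\tau,-1/\tau)$, with the branch of $(-i\tau)^{1/2}$ that is positive when $\tau\in i\R_{>0}$ --- and then expanding $\vartheta_{11}(\cdot\,,-1/\tau)$ in its own defining series, one finds after completing the square in the exponents that the cross terms assemble into $(n+\tfrac12-z)^{2}$, while the prefactors of the transformation together with the half-period characteristic of $\vartheta_{11}$ produce the overall $-(-i\tau)^{-1/2}$ and the phases $e^{\pi i(n+1/2)}=i(-1)^{n}$. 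This yields the first displayed identity; the computation is routine but sign-sensitive.

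For the second identity I would apply the same transformation to $\vartheta_{11}(z+\tfrac12,\tau)$. The cleanest route is to observe that the half-period shift turns $\vartheta_{11}$ into $\vartheta_{10}$ up to an explicit elementary factor (a power of $q$ times a power of $e^{2\pi i z}$, which is absorbed on the transformed side), and that $\vartheta_{10}$ carries the characteristic $[\tfrac12,0]$, so its image under the transformation is the \emph{same} Gaussian sum $\sum_{n}\exp(-\tfrac{\pi i}{\tau}(n+\tfrac12-z)^{2})$ but now \emph{without} the alternating phase $e^{\pi i(n+1/2)}$. Equivalently one may simply substitute $z\mapsto z+\tfrac12$ into the first identity and verify that the extra quadratic completion cancels the phase; either way one arrives at the stated expression.

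To obtain the approximations, specialize to $\tau\in i\R_{>0}$ with $-i\tau\to 0^{+}$ --- the regime $q=e^{-h}$, $h\to 0^{+}$ relevant to the application --- so that $-\pi i/\tau$ is a negative real number tending to $-\infty$. Then the modulus of the $n$-th term of either Gaussian sum equals $\exp\!\big(-\tfrac{2\pi^{2}}{h}\big((n+\tfrac12-\realp z)^{2}-(\imagp z)^{2}\big)\big)$, and the term with $n=\round{z-\tfrac12}$, for which $n+\tfrac12-z=-\fractional{z-\tfrac12}$ and $\fractional{z-\tfrac12}^{2}\le\tfrac14$, dominates every other term by a factor exponentially small in $1/h$, uniformly for $z$ in a fixed horizontal strip bounded away from $\Z$ (which is all that is needed for the $1$-point function $F$). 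Retaining that single term gives the claimed $\approx$: the surviving phase is $e^{\pi i(\round{z-1/2}+1/2)}$ in the first case and $1$ in the second.

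The real obstacle here is bookkeeping rather than anything conceptual. One must pin down the exact normalization of $\vartheta_{11}$ fixed in Section~\ref{jacobithetas} and the precise phase and branch in Jacobi's transformation, so that the constants emerge exactly as stated (an extra root of unity, or a stray $(-1)^{\round{z-1/2}}$, is easy to lose or gain). One must also flag the harmless degeneracy at $z\in\Z$ --- where two terms of the sum tie --- for which the relation ``$\approx$'' has to be read with a bounded correction: in the first identity the two tying terms cancel, consistently with $\vartheta_{11}$ vanishing on $\Z$, whereas in the second they add, the factor $2$ matching the fact that $\vartheta_{11}(z+\tfrac12)$ is nonzero there. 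Apart from this, the proof is just Poisson summation plus the elementary bound $\fractional{z-\tfrac12}^{2}\le\tfrac14$.
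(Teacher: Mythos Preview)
Your approach is essentially the paper's: the author's proof says only that the identities are ``a straightforward application of the modular transformation'' from Section~\ref{jacobithetas}, together with the half-period relation $\vartheta_{11}(z+\tfrac12,\tau)=-\vartheta_{10}(z,\tau)$, and you carry out exactly this with the added care about the dominant-term estimate. One small caveat: your parenthetical alternative ``simply substitute $z\mapsto z+\tfrac12$ into the first identity'' does not by itself reproduce the stated second series (it yields a sum over integers $n$ with phase $e^{\pi i(n+1/2)}$ and argument $(n-z)^2$, rather than a phaseless sum with argument $(n+\tfrac12-z)^2$), so the route through $\vartheta_{10}$ and its own modular transformation is the one that actually lands on the displayed form.
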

\begin{proof}
This is a straightforward application of the modular transformation; see Section \ref{jacobithetas}. We also use the identity
\[\vartheta_{11}\!\left(z+\tfrac12,\tau\right)=-\vartheta_{10}(z,\tau).\qedhere\]
\end{proof}
In order to obtain the coefficient of $y^0$, we take the integral
\[
\oint_{|y|=c}\frac{dy}y\sqrt{\frac{\vartheta(-y)\vartheta( xy)}{\vartheta(y)\vartheta(- xy)}}\]
Letting $x=e^{2\pi i z}$, $y=e^{2\pi i w}$, and $q=e^{2\pi i \tau}$, and using the approximations in Lemma \ref{lem:thetaident}, this becomes
\begin{multline*}
\int_0^1e^{\pi i\left(\round{w+z-\frac12}-\round{w-\frac12}+1\right)}\times\\
\exp-\frac{\pi i}\tau
\left(\fractional{w}^2+\fractional{w+z-\tfrac12}^2-\fractional{w-\tfrac12}^2
-\fractional{w+z}^2\right)dw.
\end{multline*}
This integral can be performed by appropriately partitioning the domain to reflect the discontinuities inherited from the functions $\fractional{\cdot}$ and $\round{\cdot}$. The result is
\begin{equation}\label{eq:integralresult}
1-(1+i)z.
\end{equation}

Since we are interested in the expansion with respect to $u$ of $F(e^{hu})$, for $x=e^u=e^{2\pi i z}$ and $q=e^{-h}=e^{2\pi i \tau}$, the result of equation \eqref{eq:integralresult} becomes $1+(2\pi)^{-1}(1-i)hu=1+O(h)$
as $h\to+0$. Thus
\[F(e^{hu})=\frac{1}{\vartheta(x)}(1+O(h))\approx \frac{\pi}{h\sin(\pi u)}.\]
Since this is precisely equivalent to \eqref{eq:asymptoticsuniform}, this establishes the question of a candidate for the limit shape, which is settled to equal the one corresponding to the uniform distribution. Also, from here we know the top-degree terms of the expectations (up to a factor due to the choice of scaling of the limit shape) are
\[\langle\p_k\rangle_{\w,q}\approx \left(\frac 2{h}\right)^{k+1} \left(1-\frac{1}{2^k}\right)\zeta(-k)+\textrm{lower order terms}.\]

\paragraph{Multiplicativity of the expectation.}
To establish this property, we need a preliminary result.
Recall the definition of the $n$-point function,
\[F(x_1,\dots,x_n)=\frac1Z\sum_{j_1,\dots,j_n\in\Z+\frac12}  x_1^{j_1}\cdots x_n^{j_n} \sum_{\lambda\in\mathcal P(j_1,\dots,j_n)}q^{|\lambda|}\w(\lambda)\]
where $\mathcal P(j_1,\dots,j_n)$ denotes the set of partitions $\lambda$ such that the numbers $j_1,\dots,j_n\in \Z+\frac12$ are all contained in the set of modified Frobenius coordinates $\{\lambda_i-i+\frac12:i=1,2,\dots\}$ of $\lambda$, and $Z=\sum_\lambda q^{|\lambda|}\w(\lambda)$ is the normalization constant. For the same reasons as explained in Section \ref{sec:quasimodularitytwoquotients}, the $n$-point function encodes the expectations of products of shifted symmetric power functions. Whence our interest in its asymptotics:
\begin{prop}
The highest degree term of the exponential generating function $F\left(e^{hu_1},e^{hu_2},\dots,e^{hu_n}\right)$ of the expectations $\langle \prod_{i=1}^m\p_{k_i}\rangle_{\w,q}$ of products of shifted-sym\-met\-ric power functions is
\[\left(\prod_{i=1}^n\frac\pi{h\sin(\pi u_i h)}\right)(1+O(h))\]
as $h\to+0$.
\end{prop}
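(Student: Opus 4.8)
The plan is to repeat, for arbitrary $n$, the computation already carried out for $n=1$ just before the proposition, starting from the Eskin--Okounkov closed formula \eqref{quotientofthetas}. Specialising it to the pure power-sum case (no $\pbar$-variables, so $N=n$ and every $\hat x_i$ equals $x_i$) and substituting $x_i=e^{hu_i}$, $q=e^{-h}$, one has
\[
F(e^{hu_1},\dots,e^{hu_n})=\frac{1}{\prod_i\vartheta(e^{hu_i})}\;[y_1^0\cdots y_n^0]\;\prod_{i<j}\frac{\vartheta(y_i/y_j)\,\vartheta(e^{hu_i}y_i/e^{hu_j}y_j)}{\vartheta(e^{hu_i}y_i/y_j)\,\vartheta(y_i/e^{hu_j}y_j)}\;\prod_i\sqrt{\frac{\vartheta(-y_i)\,\vartheta(e^{hu_i}y_i)}{\vartheta(y_i)\,\vartheta(-e^{hu_i}y_i)}}.
\]
By Lemma \ref{lem:asymptoticsoftheta} the prefactor $\prod_i\vartheta(e^{hu_i})^{-1}$ already contributes exactly $\prod_i\frac{\pi}{h\sin(\pi u_i)}\,(1+O(h))$, so the whole assertion is equivalent to the claim that the bracketed constant term tends to $1+O(h)$ as $h\to+0$. (This factorisation of the top term is the same statement as the top-degree multiplicativity $\langle\prod_i\p_{k_i}\rangle_{\w,q}\approx\prod_i\langle\p_{k_i}\rangle_{\w,q}$ recorded in the introduction.)

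To estimate the bracket I would write the constant-term extraction as an iterated contour integral $\frac{1}{(2\pi i)^n}\oint\frac{dy_1}{y_1}\cdots\oint\frac{dy_n}{y_n}$ over circles lying in the domain of validity of the expansion, pass to angular variables $a_i=\frac1{2\pi i}\log y_i$ so that the region of integration becomes $[0,1]^n$, and then apply the modular transformation --- Lemmas \ref{lem:asymptoticsoftheta} and \ref{lem:thetaident} --- to every $\vartheta$ occurring in the integrand. Up to exponentially small errors each $\vartheta(e^{2\pi i\zeta},e^{-h})$ turns into $i\sqrt{2\pi/h}$ times an elementary sign factor times a Gaussian $\exp(-\frac{2\pi^2}{h}\{\cdots\}^2)$ in the $a_i$; since all these quadratic phases vanish identically when the $u_i$ vanish, they are genuinely small (of size $O(h)$ through their dependence on the $u_i$), so one may expand in the $u_i$ and integrate term by term, partitioning $[0,1]^n$ along the discontinuity loci of the fractional-part functions, exactly as in Section \ref{sec:limitshape}. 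On the dominant part of the region the diagonal factors $\sqrt{\vartheta(-y_i)\vartheta(e^{hu_i}y_i)/\vartheta(y_i)\vartheta(-e^{hu_i}y_i)}$ decouple one variable at a time and reproduce the $n=1$ integrand, whose constant-term integral was computed there to equal $1-(1+i)z_i=1+O(h)$; multiplying over $i$ gives $1+O(h)$.

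The step that genuinely requires work --- and the one I expect to be the main obstacle --- is showing that the off-diagonal factors $\prod_{i<j}\vartheta(y_i/y_j)\vartheta(e^{hu_i}y_i/e^{hu_j}y_j)\big/\vartheta(e^{hu_i}y_i/y_j)\vartheta(y_i/e^{hu_j}y_j)$ contribute only $1+O(h)$ to the dominant part of the integral, i.e.\ that the $n$-point function decouples into a product of one-point functions at top order in $1/h$. One has to check that these factors, which equal $1$ when the $u_i$ vanish, remain $1+O(h)$ uniformly after the modular transformation --- the four theta-arguments $y_i/y_j$, $e^{h(u_i-u_j)}y_i/y_j$, $e^{hu_i}y_i/y_j$, $e^{-hu_j}y_i/y_j$ pair up so that their Gaussian parts cancel in the ratio up to $O(h)$ --- and that the contribution of the remaining regions of $[0,1]^n$ is of strictly lower order in $1/h$ (or exponentially small); the nesting of the integration radii dictated by the domain of the expansion keeps the $y_i$ apart and away from the zeros of $\vartheta$. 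This is precisely the decoupling mechanism used by Eskin and Okounkov for the analogous $n$-point functions in the Abelian case \cite{branchedcoverings}; the only genuinely new feature in the pillowcase setting is the presence of the extra square-root factors, whose effect was already shown in the $n=1$ analysis of Section \ref{sec:limitshape} to be a harmless $1+O(h)$. Combining the prefactor estimate with this decoupling of the bracket yields $F(e^{hu_1},\dots,e^{hu_n})=\left(\prod_i\frac{\pi}{h\sin(\pi u_i)}\right)(1+O(h))$, which is \eqref{eq:asymptoticsuniform} applied to each variable separately.
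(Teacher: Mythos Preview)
Your overall plan is the same as the paper's: start from the Eskin--Okounkov $n$-point formula, pull out the prefactor $\prod_i\vartheta(e^{hu_i})^{-1}\approx\prod_i\frac{\pi}{h\sin(\pi u_i)}$, and show that the bracketed constant term is $1+O(h)$ by arguing that the off-diagonal crossterms $\prod_{i<j}\frac{\vartheta(y_i/y_j)\vartheta(x_iy_i/x_jy_j)}{\vartheta(x_iy_i/y_j)\vartheta(y_i/x_jy_j)}$ contribute only $1+O(h)$, after which the diagonal square-root factors decouple into $n$ copies of the 1-point calculation already done.

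Where you diverge is in the execution of the crossterm estimate. You propose to write the constant-term extraction as an $n$-fold contour integral, pass to angular variables on $[0,1]^n$, apply the Gaussian form of the modular transformation (Lemma~\ref{lem:thetaident}) to every $\vartheta$, partition the cube along the discontinuities of the fractional-part functions, and then argue the Gaussian exponents in the ratio cancel to $O(h)$. The paper does something much more direct: it never integrates the crossterm at all. Instead it applies only the sine form (Lemma~\ref{lem:asymptoticsoftheta}) to each $\vartheta$ in the crossterm, obtaining
\[
e^{-u_iu_j/h}\,\frac{\sin\frac{\pi}{h}(u_i+v_i-u_j-v_j)\sin\frac{\pi}{h}(v_i-v_j)}{\sin\frac{\pi}{h}(u_i+v_i-v_j)\sin\frac{\pi}{h}(v_i-u_j-v_j)}
\]
(writing $y_i=e^{v_i}$), and then takes $h$ complex, close to the imaginary axis, so that each $\sin(b/h)$ is dominated by a single exponential $\frac{1}{2i}\exp(-\frac{ib}{h}\sgn\det(h,b))$. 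The four arguments telescope exactly, so the quotient of sines is $1$ up to exponentially small terms, and only the prefactor $e^{-u_iu_j/h}$ survives; after the rescaling $u_i\to hu_i$ inherent in the statement this becomes $e^{-hu_iu_j}=1+O(h)$. Because the crossterm is thus \emph{pointwise} $1+O(h)$ in the $y$-variables, it factors straight out of the constant-term bracket, and the bracket reduces to a product of $n$ independent 1-point brackets without any $n$-dimensional integration or domain partitioning.

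So your proposal would likely go through, but it is working harder than necessary: the paper's sine-form plus complex-$h$ trick dispatches the crossterm in a few lines and avoids the combinatorics of the $[0,1]^n$ partition entirely.
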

\begin{proof}
We consider A. Okounkov and A. Eskin's formula for the $n$-point function,
\[F(x_1,x_2,\dots,x_n)=\frac{1}{\prod_i\vartheta(x_i)}\left[y_1^0y_2^0\cdots y_n^0\right]
\prod_{i<j}\frac{\vartheta\!\left(\frac{y_i}{y_j}\right)
\vartheta\!\left(\frac{x_iy_i}{x_jy_j}\right)}
{\vartheta\!\left(\frac{x_iy_i}{y_j}\right)
\vartheta\!\left(\frac{y_i}{x_jy_j}\right)}
\sqrt{\prod_i\frac{\vartheta(-y_i)\vartheta(x_iy_i)}{\vartheta(y_i)\vartheta(-x_iy_i)}}
.\]
We want to show that the factors of the form
\begin{equation}\label{eq:thecrossterm}
\frac{\vartheta\!\left(\frac{y_i}{y_j}\right)
\vartheta\!\left(\frac{x_iy_i}{x_jy_j}\right)}
{\vartheta\!\left(\frac{x_iy_i}{y_j}\right)
\vartheta\!\left(\frac{y_i}{x_jy_j}\right)}
\end{equation}
are of order $1+O(h)$ as $h\to+0$. Then the same argument as we used before for the 1-point function will complete the proof.
We apply the approximation of Lemma \ref{lem:asymptoticsoftheta} to each factor of the crossterm \eqref{eq:thecrossterm}. Letting $x_i=e^{u_i}$ and $y_i=e^{v_i}$, we get
\begin{equation}\label{eq:expandquotientofsines}
e^{-\frac{u_1u_2}h}\frac{\sin\frac\pi h(u_1+v_1-u_2-v_2)\sin\frac \pi h(v_1-v_2)}
{\sin\frac\pi h(u_1+v_1-v_2)\sin\frac\pi h(v_1-u_2-v_2)}
+\textrm{exponentially small terms}.
\end{equation}
We will show that the quotient of sines is approximately equal to 1 for small, suitable $h$.

We assume that $h$ is very close to the imaginary axis and is in the first quadrant in $\C$ (i.e., $\imagp h\gg\realp h>0$), and that $u_1,u_2,v_1,v_2$ are such that the determinants $\det(h,u_1+v_1-u_2-v_2)$, $\det(h,v_1-v_2)$, $\det(h,u_1+v_1-v_2)$, $\det(h, v_1-u_2-v_2)$ are positive. Here,
\[\det(h,b)=\det\begin{pmatrix}
\realp h & \imagp h \\
\realp b & \imagp b
\end{pmatrix}.\]
As $h\to0$,
\[\sin(b/h)\approx\frac1{2i}\exp\left(-\frac{ib}h\sgn \det(h,b)\right),\]
so the quotient of sines behaves as
\begin{multline*}
\exp\bigg(-\frac{i\pi}h\big((u_1+v_1-u_2-v_2)+(v_1-v_2)-\\
(u_1+v_1-v_2)-( v_1-u_2-v_2)\big)(+1)\bigg)=1.
\end{multline*}

So only the exponential term in \eqref{eq:expandquotientofsines} counts. Since we really are interested in the limiting behavior of $F(e^{hu_1},e^{hu_2},\dots,e^{hu_n})$, the correct form of the term is
\[e^{-hu_1u_2}=1+O(h)\]
as $h\to+0$, and we are done.
\end{proof}

It follows from the following proposition and from the estimate \cite[Section 3.3.5]{pillow}
\[\left\langle\p_\rho\right\rangle_{\w,q}=O\left(h^{-|\rho|-\ell(\rho)}\right)\]
that
\begin{equation}\label{eq:multiplicativity}
\lim_{h\to+0}h^{-\sum_i\left(k_i+1\right)}\left(\left\langle\prod_i\p_{k_i}
\right\rangle_{\w,q}
-\prod_i\langle\p_{k_i}\rangle_{\w,q}\right)=0.
\end{equation}
In other words, the top-degree term of these expectations is multiplicative.
\begin{rmk}
This is only possible if the limiting measure is completely concentrated at the limit shape candidate proposed above. Indeed, this means that for any function $f$ in the algebra generated by the functions $\p_k$,
\[h^{-2\deg f}|\var f|=h^{-2\deg f}\left(\langle f^2\rangle_{\w,q}-\langle f\rangle_{\w,q}^2\right)\to0.\]
(Here, $\deg f$ refers to the grading that assigns $\deg \p_k=k+1$.) The variance vanishes, so the limiting measure must be a Dirac delta supported at the limit shape.
\end{rmk}

\begin{rmk}
This is optimal: the multiplicativity does not go beyond the highest degree term. Already in the case of products of $\p_1$ we have\footnote{These numbers were obtained by finding the decomposition of the corresponding series as a polynomial of Eisenstein series, through the direct computation of the first few terms.}
\begin{align*}
\langle\p_1\rangle_{\w,q}&=\frac{\pi^2}{24\,h^2}+\frac1{4\,h}+\mathrm{e.s.t.} \\
\langle\p_1^2\rangle_{\w,q}&
 =\frac{\pi^4}{576\,h^4}+\frac{7\,\pi^2}{48\,h^3}+\frac{17}{16\,h^2}+\mathrm{e.s.t.} \\
\langle\p_1^3\rangle_{\w,q}&=
 \frac{\pi^6}{13\,824\,h^6}+\frac{13\,\pi^4}{768\,h^5}+\frac{93\,\pi^2}{128\,h^4}
 +\frac{305}{64 \,h^3}+\mathrm{e.s.t.} \\
\langle\p_1^4\rangle_{\w,q}&=\frac{\pi^8}{331\,776\,h^8}+\frac{19\,\pi^6}{13\,824\,h^7}
+\frac{433\,\pi^4}{1\,536\,h^6}+\frac{2\,339\,\pi^2}{384\,h^5} +\frac{8\,033}{256\,h^4}
+\mathrm{e.s.t.}
\end{align*}
Here, ``e.s.t.'' stands for ``exponentially small terms.''
\end{rmk}
\begin{rmk}\label{rmk:noCLT}
As another consequence of the asymptotic expansions above, we have no hope of getting a full Central Limit Theorem for this regime, because this would imply Wick's theorem. Wick's theorem characterizes Gaussian probability distributions with mean zero; see for example \cite[Section 1.2]{zinnjustin}. In our case, it would contain the statement that
\[\langle abcd\rangle_{\w,q}=\langle ab\rangle_{\w,q}\langle cd\rangle_{\w,q} +\langle ac\rangle_{\w,q}\langle bd\rangle_{\w,q} +\langle ad\rangle_{\w,q}\langle bc\rangle_{\w,q}\]
for all $a$, $b$, $c$, and $d$ in the algebra generated by the functions $\p_k$ with vanishing $(\w,q)$-mean.
In particular, setting $a=b=c=d=\p_1$, this means that we would need to have
\[\left\langle \left(\p_1-\langle\p_1\rangle_{\w,q}\right)^4\right\rangle_{\w,q}
=3\left\langle \left(\p_1-\langle\p_1\rangle_{\w,q}\right)^2\right\rangle_{\w,q}^2,\]
but instead we get
 \[\left\langle \left(\p_1-\langle\p_1\rangle_{\w,q}\right)^4\right\rangle_{\w,q}=
 \frac{11\pi^4}{64h^6}+\cdots\neq \frac{3\pi^4}{64h^6}+\cdots=3\left\langle \left(\p_1-\langle\p_1\rangle_{\w,q}\right)^2\right\rangle_{\w,q}^2.\]
  So there is no such theorem, and hence the convergence to the limit shape is not normal.
\end{rmk}

Figuring out what the next terms are in the expansions of the expectations of products of shifted power functions $\p_k$ is left for future work.

\section{Alternative formula of the weights in terms of hooks}
\label{sec:wformulahooks}
In this section, we will prove that
\begin{equation}\label{pillowcasehooksformula}
\w(\lambda)=\left(\frac{\prod\textrm{odd hook lengths of $\lambda$}}{\prod\textrm{even hook lengths of $\lambda$}}\right)^2
\end{equation}
for $\lambda$ balanced, and $\w(\lambda)=0$ for all other $\lambda$. 


Let $\lambda $ be a balanced partition with 2-quotient $(\alpha,\beta)$. Recall that
\begin{align}
\w(\lambda)&=\left(\frac{\dim\lambda}{|\lambda|!}\right)^2 \f_{(2,2,\dots,2)}(\lambda)^4 \notag \\
&=\left(\frac{\dim\lambda}{|\lambda|!}\right)^2  \left(\left|C_{(2,2\dots,2)}\right| \frac{\chi^\lambda(2,2,\dots,2)}{\dim\lambda} \right)^4 \label{plugeverythinghere}
\end{align}
Now,
\begin{equation}\label{conjugacyclasspiece}
\left|C_{(2,2\dots,2)}\right| = \frac{|\lambda|!}{\mathfrak z(2,2,\dots,2)}=\frac{|\lambda|!}{2^{|\lambda|/2}(|\lambda|/2)!}
\end{equation}
and, by formulas \eqref{eq:dominoformula} and \eqref{dimensionsofquotients},
\begin{align*}
\left|\chi^\lambda(2,2,\dots,2)\right|&=\binom{|\lambda|/2}{|\alpha|}\dim\alpha \dim\beta\\
&=\binom{|\lambda|/2}{|\alpha|}\frac{|\alpha|!|\beta|!}{\prod\textrm{halves of the even hook lengths of $\lambda$}}.
\end{align*}
By items \eqref{LemAlphaPlusBetaIsLambdaOverTwo} and \eqref{LemHalfOfHooksAreEven} of Lemma \ref{LemTwoQuotients}, this equals
\begin{equation}\label{characterpiece}
\left|\chi^\lambda(2,2,\dots,2)\right|=\frac{2^{|\lambda|/2}(|\lambda|/2)!}{\prod\textrm{even hook lengths of $\lambda$}}.
\end{equation}
On the other hand, by the hook formula \eqref{hookformula}, we have that
\begin{equation}\label{dimensionpiece}
\dim\lambda=\frac{|\lambda|!}{\prod\textrm{hook lengths of $\lambda$}}
\end{equation}
Since
\[\frac{\prod\textrm{hook lengths of $\lambda$}}{\left(\prod\textrm{even hook lengths of $\lambda$}\right)^2}=
\frac{\prod\textrm{odd hook lengths of $\lambda$}}{\prod\textrm{even hook lengths of $\lambda$}}, \]
plugging \eqref{conjugacyclasspiece}, \eqref{characterpiece}, and \eqref{dimensionpiece} into \eqref{plugeverythinghere}, we get formula \eqref{pillowcasehooksformula}.

On the other hand, as was remarked in Section \ref{twoquotientssection}, if $\lambda$ is not balanced, it is impossible to decompose $\lambda$ into a 2-domino tiling. By the Murnaghan-Nakayama rule (see Section \ref{murnaghannakayamarule}) this implies that $\chi^\lambda(2,2,\dots,2)$ vanishes when $\lambda$ is not balanced, whence also $\w(\lambda)=0$ in this case. 
\section{A variational argument}\label{sec:variational}
In order to deduce the asymptotic behavior of the pillowcase weights $\w(\lambda)$, we will use a variational argument resembling the idea that S. Kerov and A. Vershik \cite{vershikkerov1985} and B. Logan and L. Shepp \cite{loganshepp} used in the analysis of the Plancherel distribution.

The idea is that we can approximate the logarithm of products of hooks by an integral, whence we can exploit the form of formula \eqref{pillowcasehooksformula}.

For a non-increasing function $F:\R_+\to\R_+$, define
\[F^{-1}(y)=\inf\{x:F(x)\leq y\},\]
and
\[h_F(x,y)=F(x)+F^{-1}(y)-x-y,\]
which gives an approximation of the hook length at $(x,y)$.
Let $\lambda$ be a \emph{balanced} partition, and $n=|\lambda|$. We contract its diagram until it has area 1, rescaling by $1/\sqrt {n}$, and we associate to it a function $F(x)$ that describes its rim,
\[F(x)=\frac 1{\sqrt n}\#\{\textrm{parts of $\lambda$ of size $\leq\lceil \sqrt n\,x \rceil$}\}.\]
In the following diagram, we have the example of $\lambda=(4,3,3,2)$.
\begin{center}
\includegraphics{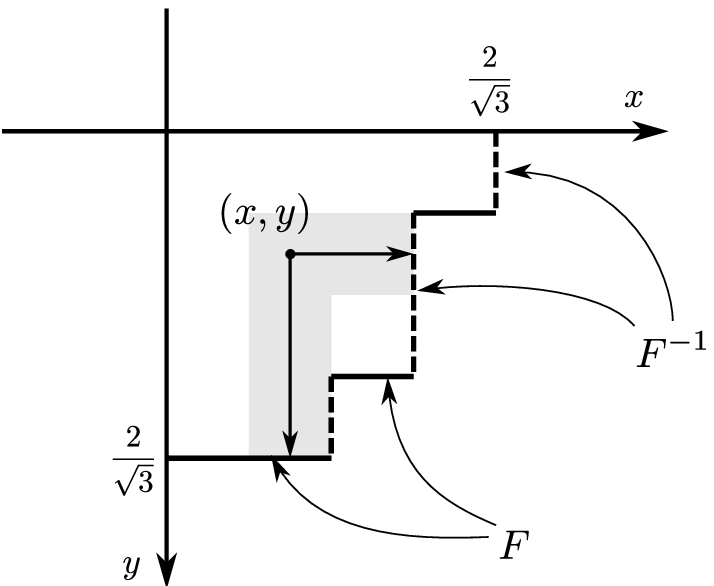}
\end{center}
In the picture, we have inverted the orientation of the $y$ axis to reflect the usual orientation of our Young diagrams, and we have drawn the parts of the rim of $\lambda$ that are described by $F$ with a solid line, while the parts that correspond do $F^{-1}$ are drawn with dashed lines. The numbers on the axes reflect the rescaling by $1/\sqrt{n}=1/\sqrt{12}$. We have also highlighted the hook corresponding to the cell $(2,2)$. We will approximate the logarithm of its length by
\[n\int_R \log \left(\sqrt{n} \,h_F(x,y)\right)\,dx\,dy,\]
where $R$ is the rectangle between the points $\frac1{\sqrt{12}}(1,1)$ and $\frac1{\sqrt{12}}(2,2)$. To motivate this approximation, we have drawn, at the center of the cell $(2,2)$, the point $(x,y)$ and an inverted L ending extending to the rescaled rim of $\lambda$; the length of this L is precisely $h_F(x,y)$: its vertical part measures $F(x)-y$, while its horizontal part measures $F^{-1}(y)-x$. Then $\sqrt n\,h_F(x,y)=4$ coincides with the length of the hook of cell $(2,2)$, and the integral above is very close to this number as well. In general, we have

\begin{lem}\label{lem:hookapproximation}
Let $\lambda$ be a partition and let $\square\in\lambda$ be a cell in its Young diagram, and denote by $F$ the non-increasing function that describes the rim of the Young diagram of $\lambda$, as defined above. Let $R_\square\subseteq\{(x,y):0\leq y\leq F(x)\}$ be the rectangular domain corresponding to $\square$. Let $h_\square$ denote the hook length of $\square$. Then
\[\log h_\square=n\int_{R_\square}\log\left(\sqrt n h_F(x,y)\right)dx\,dy + c\!\left(h_\square\right),\]
where
\[c(x)=\frac12\sum_{k=1}^\infty\frac{1}{k(k+1)(2k+1)x^{2k}}.\]
\end{lem}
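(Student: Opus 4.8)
The plan is to reduce the statement to a single elementary one‑variable computation. Fix the cell $\square$, sitting in row $r$ and column $c$ of $\lambda$, and write its arm and leg lengths as $a=\lambda_r-c$ and $\ell=\lambda'_c-r$, so that $h_\square=a+\ell+1$. After the rescaling by $1/\sqrt n$ the rectangle $R_\square$ is the unit cell, a square of side $1/\sqrt n$ and area $1/n$; hence $n\int_{R_\square}(\cdot)$ is just the \emph{average} of the integrand over $R_\square$. The key geometric observation is that $R_\square$ lies inside a single ``column strip'' of $F$ and a single ``row strip'' of $F^{-1}$, so that on (the interior of) $R_\square$ the functions $F(x)$ and $F^{-1}(y)$ are \emph{constant}, equal respectively to $\lambda'_c/\sqrt n$ and $\lambda_r/\sqrt n$. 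Consequently, writing $x=(c-1+s)/\sqrt n$ and $y=(r-1+t)/\sqrt n$ with $s,t\in[0,1]$, one gets
\[\sqrt n\,h_F(x,y)=\lambda'_c+\lambda_r-(c-1+s)-(r-1+t)=h_\square+1-s-t,\]
while $dx\,dy=\tfrac1n\,ds\,dt$, so that
\[n\int_{R_\square}\log\bigl(\sqrt n\,h_F(x,y)\bigr)\,dx\,dy=\int_0^1\!\!\int_0^1\log(h_\square+1-s-t)\,ds\,dt.\]

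It therefore suffices to prove the purely numerical identity
\[c(h)=\log h-\int_0^1\!\!\int_0^1\log(h+1-s-t)\,ds\,dt\qquad(h\ge 1).\]
Since $\iint_{[0,1]^2}ds\,dt=1$, the right-hand side equals $-\iint_{[0,1]^2}\log\bigl(1+\tfrac{1-s-t}{h}\bigr)ds\,dt$. Pushing forward $ds\,dt$ under $(s,t)\mapsto r:=1-s-t$ gives the triangular density $1-|r|$ on $[-1,1]$; pairing $r$ with $-r$ collapses $\log(1+\tfrac{r}{h})+\log(1-\tfrac{r}{h})=\log(1-\tfrac{r^2}{h^2})$, so the right-hand side becomes $-\int_0^1(1-r)\log\bigl(1-\tfrac{r^2}{h^2}\bigr)dr$. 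Expanding $\log(1-u)=-\sum_{k\ge1}u^k/k$ and integrating term by term using $\int_0^1(1-r)r^{2k}\,dr=\tfrac1{2k+1}-\tfrac1{2k+2}=\tfrac1{2(k+1)(2k+1)}$ yields
\[-\int_0^1(1-r)\log\Bigl(1-\frac{r^2}{h^2}\Bigr)dr=\sum_{k\ge1}\frac1{k\,h^{2k}}\cdot\frac1{2(k+1)(2k+1)}=\frac12\sum_{k\ge1}\frac1{k(k+1)(2k+1)h^{2k}}=c(h),\]
which is exactly the claimed expression. (When $h_\square=1$ the rescaled hook $h_\square+1-s-t$ vanishes at the far corner $s=t=1$, but the singularity is logarithmic, the integrals converge, and the term‑by‑term integration above remains legitimate.)

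The only genuine bookkeeping is the geometric observation of the first paragraph: one must check, from the definitions of $F$, of the generalized inverse $F^{-1}$, of the ceiling function and of the rescaling, that on the interior of $R_\square$ one has precisely $\sqrt n\,F\equiv\lambda'_c$ and $\sqrt n\,F^{-1}\equiv\lambda_r$ — in particular that no off‑by‑one shift creeps in — so that $\sqrt n\,h_F=h_\square+1-s-t$ on the nose. Granting this, the remainder is the routine series manipulation above, and there is no real obstacle.
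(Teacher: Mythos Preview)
Your proof is correct and follows essentially the same route as the paper's. The paper centers the square at $(x_0,y_0)$ and writes $\sqrt{n}\,h_F(x,y)=h_\square-\sqrt{n}(x-x_0)-\sqrt{n}(y-y_0)$, which is exactly your $h_\square+1-s-t$ after the affine change $s=\sqrt{n}(x-x_0)+\tfrac12$, $t=\sqrt{n}(y-y_0)+\tfrac12$; it then says only ``expand the Taylor series of the logarithm and integrate term by term,'' whereas you make the triangular pushforward and the $r\leftrightarrow -r$ pairing explicit, but the computation is identical.
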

\begin{rmk}\label{rmk:approximation}
The function $c$ is strictly decreasing in the interval $[1,+\infty)$. It remains between $c(1)=\frac12(3-4\log 2)\approx 0.113706$ and
\[\lim_{x\to+\infty} c(x)=0.\]
This implies that the approximation of the logarithm of a product of any $m$ hook lengths using the integral of $\log\left(\sqrt nh_F(x,y)\right)$ will have an error of order $\leq c(1)\cdot m$. This estimate can be improved in the large-scale case; see Lemma \ref{lem:asymptoticsofremainder}.
\end{rmk}
\begin{proof}
Let $(x_0,y_0)$ be the point at the center of $R_\square$. Recall that the area of $R_\square$ is $1/n$. Then
\begin{align*}
n\int_{R_\square} &\log \left(\sqrt n h_F(x,y)\right)\,dx\,dy =
n\int_{R_\square}\log \left(h_\square-\sqrt n(x-x_0)-\sqrt n(y-y_0)\right)dx\,dy \\
&=\log h_\square+n\int_{R_\square} \log\left(1-\frac{(x-x_0)+(y-y_0)}{h_\square/\sqrt n}\right)dx\,dy \\
&=\log h_\square +  \int_{\sqrt n y_0-\frac12}^{\sqrt n y_0+\frac12}\int_{\sqrt n x_0-\frac12}^{\sqrt n x_0+\frac12}\log\left(1-\frac{(u-\sqrt n x_0)+(v-\sqrt n y_0)}{h_\square}\right)du\,dv,
\end{align*}
where $u=\sqrt n x$ and $v=\sqrt n y$.
Now expand the Taylor series of the logarithm and integrate term by term.
\end{proof}
\begin{lem}\label{lem:asymptoticsofremainder}
\[\sum_{\square\in\lambda} (-1)^{h_\square}c(h_\square)=O\left(\sqrt{|\lambda|}\right)\quad \textrm{as $|\lambda|\to\infty.$}\]
\end{lem}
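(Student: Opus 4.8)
The plan is to split the sum $\sum_{\square\in\lambda}(-1)^{h_\square}c(h_\square)$ according to the value of the hook length $h_\square$, and to exploit the fact that the signed character of the hooks produces massive cancellation, reducing everything to a count of cells with small hook length. First I would record from Remark \ref{rmk:approximation} that $c$ is strictly decreasing on $[1,+\infty)$, that $0<c(h)\le c(1)$ for every integer $h\ge1$, and more precisely that $c(h)=O(h^{-2})$ as $h\to\infty$ (from the explicit series $c(x)=\frac12\sum_{k\ge1}\frac1{k(k+1)(2k+1)x^{2k}}$, the $k=1$ term gives $c(x)\sim\frac1{12x^2}$). So for a threshold $T$ to be chosen, the cells with $h_\square>T$ contribute at most $\sum_{h>T}c(h)\cdot\#\{\square\in\lambda:h_\square=h\}$ in absolute value, and since every hook length of $\lambda$ is at most $|\lambda|$ but each value occurs with multiplicity bounded by... here is where one must be a little careful: a given hook length $h$ can occur many times. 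The cleaner bound is $\#\{\square:h_\square=h\}\le$ (number of cells) $=|\lambda|$, which is too weak; better is that the cells with a fixed hook length $h$ lie along an antidiagonal-type set and their number is $O(|\lambda|/h)$ is false in general. Instead I would use the crude but sufficient estimate $\#\{\square\in\lambda:h_\square\ge h\}\le |\lambda|$ together with summation by parts: $\sum_{\square:h_\square>T}c(h_\square)\le c(T)|\lambda|+\sum_{h>T}|c(h)-c(h+1)|\,|\lambda|=c(T)\cdot|\lambda|$ since $c$ is decreasing and $c(h)\to0$; wait, that gives $c(T)|\lambda|$, and choosing $T=\sqrt{|\lambda|}$ makes this $O(|\lambda|/T^2)\cdot|\lambda|=O(1)$ — actually $c(T)=O(T^{-2})=O(1/|\lambda|)$, so this piece is $O(1)$, comfortably within $O(\sqrt{|\lambda|})$.

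Next, for the cells with $h_\square\le T=\lceil\sqrt{|\lambda|}\rceil$, I would simply bound the sum trivially: $\bigl|\sum_{\square:h_\square\le T}(-1)^{h_\square}c(h_\square)\bigr|\le c(1)\cdot\#\{\square\in\lambda:h_\square\le T\}$. So the whole problem reduces to the combinatorial claim
\[
\#\{\square\in\lambda:h_\square\le T\}=O\bigl(|\lambda|/T\bigr)\cdot T=O(|\lambda|)?
\]
That is too weak. The right statement is $\#\{\square\in\lambda:h_\square\le T\}=O(T\sqrt{|\lambda|})$, which for $T=\sqrt{|\lambda|}$ gives $O(|\lambda|)$ — still too weak! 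So the trivial bound on the small-hook part does not suffice, and I must instead use the sign. This is the crux: the point of $(-1)^{h_\square}$ is that for each fixed value $h$ the contribution is $(-1)^h c(h)\cdot N_h$ where $N_h=\#\{\square:h_\square=h\}$, and one should pair up $h$ even with $h$ odd. Equivalently, write the small-hook sum as $\sum_{1\le h\le T}(-1)^h c(h) N_h$ and use Abel summation against the decreasing sequence $c(h)$: this equals $\sum_{1\le h\le T}\bigl(c(h)-c(h+1)\bigr)M_h + (\text{boundary})$, where $M_h=\sum_{j\le h}(-1)^j N_j$ is a signed partial count of cells with hook length $\le h$. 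So everything comes down to the \emph{key lemma}: $M_h=O(\sqrt{|\lambda|})$ uniformly in $h$, i.e. the number of cells of $\lambda$ with even hook length in $[1,h]$ differs from the number with odd hook length in $[1,h]$ by $O(\sqrt{|\lambda|})$.

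This key lemma is where the structure from Section \ref{twoquotientssection} enters, and I expect it to be the main obstacle. I would prove it using the correspondence between hooks of $\lambda$ and pebble exchanges in the Maya diagram: a cell with hook length $h$ corresponds to a pair (black pebble at site $i$, white pebble at site $i+h$). Fixing the black pebble at site $i$, the cells of $\lambda$ whose hook "starts" at $i$ and has length $\le h$ correspond to white pebbles among sites $i+1,\dots,i+h$; their parity is the parity of $h'=$ (distance), and by Lemma \ref{LemTwoQuotients}(\ref{LemOddAndEvenHooksPositions}) the even-length ones connect pebbles in the same component of the 2-quotient while odd-length ones connect opposite components. Summing over all black pebbles $i$ that actually have a cell to their right (there are $O(\sqrt{|\lambda|})$ "active" pebbles, since the number of rows plus columns of a partition of size $n$ is $O(\sqrt n)$ once it is balanced... actually the number of distinct hook arms is controlled by the number of corners, which is $O(\sqrt n)$), one gets that $M_h$ is a sum of $O(\sqrt{|\lambda|})$ terms each bounded by a constant — because within a single "window" of length $h$ attached to one black pebble, the alternating count of white pebbles telescopes to something in $\{-1,0,1\}$ by the alternating black/white structure along the contour. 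Assembling: $|M_h|\le C\sqrt{|\lambda|}$, hence by Abel summation the small-hook sum is $\le \sum_h(c(h)-c(h+1))C\sqrt{|\lambda|}+O(\sqrt{|\lambda|})=c(1)C\sqrt{|\lambda|}+O(\sqrt{|\lambda|})=O(\sqrt{|\lambda|})$, and combined with the $O(1)$ large-hook estimate this finishes the proof. The delicate point to get right is the precise bookkeeping that the number of "active" black pebbles (equivalently, the number of distinct values of the first coordinate appearing in a hook arm, or just the number of cells on the rim) is $O(\sqrt{|\lambda|})$, and that each windowed alternating count is $O(1)$; these are exactly the facts flagged as "clear from the description" after Lemma \ref{LemTwoQuotients}, so I would spell them out carefully here.
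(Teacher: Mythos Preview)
Your approach is much more elaborate than the paper's, and the argument for your ``key lemma'' has a real gap. You assert that the number of ``active'' black pebbles, equivalently the number of cells on the rim of $\lambda$, is $O(\sqrt{|\lambda|})$; this is false in general (for $\lambda=(1^n)$ the rim has $n$ cells and there are $n$ nontrivial black pebbles). You also claim each windowed alternating white-pebble count is $O(1)$ ``by the alternating black/white structure along the contour,'' but Maya diagrams have no such alternation: the colours can be arranged arbitrarily in the active region, so a window of length $h$ attached to a single black pebble can have alternating white-count of order $h$. Thus your bound $|M_h|=O(\sqrt{|\lambda|})$, while plausibly true, is not established by what you wrote, and with it the Abel-summation step collapses.

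The paper sidesteps all of this by discarding the sign $(-1)^{h_\square}$ entirely. It just bounds the alternating sum by $\sum_{h_\square\ \text{odd}}c(h_\square)$ and then observes that, since $c$ is decreasing and $\sum_{h\ge1}c(h)<\infty$, this sum is largest when the small odd hook lengths are as numerous as possible; the extremal shape is the staircase $(\ell,\ell-1,\dots,1)$, for which the sum is visibly $O(\ell)=O(\sqrt{|\lambda|})$. If you want to make the extremal step airtight, the Durfee square does it: if $d$ is its side then $d\le\sqrt{|\lambda|}$; the cells to the right of the square lie in at most $d$ rows, those below in at most $d$ columns, and those inside in $d$ rows, while within any single row or column the hook lengths are pairwise distinct, so each such row or column contributes at most $\sum_{h\ge1}c(h)$. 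Hence $\sum_{\square\in\lambda}c(h_\square)\le 3d\sum_{h\ge1}c(h)=O(\sqrt{|\lambda|})$. The moral is that the cancellation you work hard to exploit is unnecessary here: the unsigned sum $\sum_\square c(h_\square)$ is already $O(\sqrt{|\lambda|})$.
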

\begin{proof}
We have
\begin{equation}\label{eq:boundforsumofhooks}
\left|\sum_{\square\in\lambda} (-1)^{h_\square}c(h_\square)\right|\leq \sum_{\{\square \in \lambda:\,\textrm{$h_\square$ odd}\}} c(h_\square).
\end{equation}
Since $c$ is decreasing and $c(h)\to0$ very quickly as $h\to\infty$, the worst case scenario is the case in which we maximize the number of \emph{small} odd hook lengths $h_\square$. This happens in the case of the \emph{staircase partition} $(\ell,\ell-1,\dots,2,1)$. This partition is not balanced, but it is the worst possible case. In the case of the staircase, it is obvious that the right hand side of \eqref{eq:boundforsumofhooks} is of order $O\left(\sqrt{|\lambda|}\right)$.
\end{proof}

Since we want to approximate the logarithm of the quotient \eqref{pillowcasehooksformula}, we need to distinguish the domains $O$ and $E$ corresponding to the cells whose hooks are of odd and even length, respectively.
For example, in the following diagram we have shaded in grey the domain $E$ in the case of $\lambda=(4,3,3,2)$; $O$ would be the remaining white area inside the rescaled Young diagram.
\begin{center}
\includegraphics{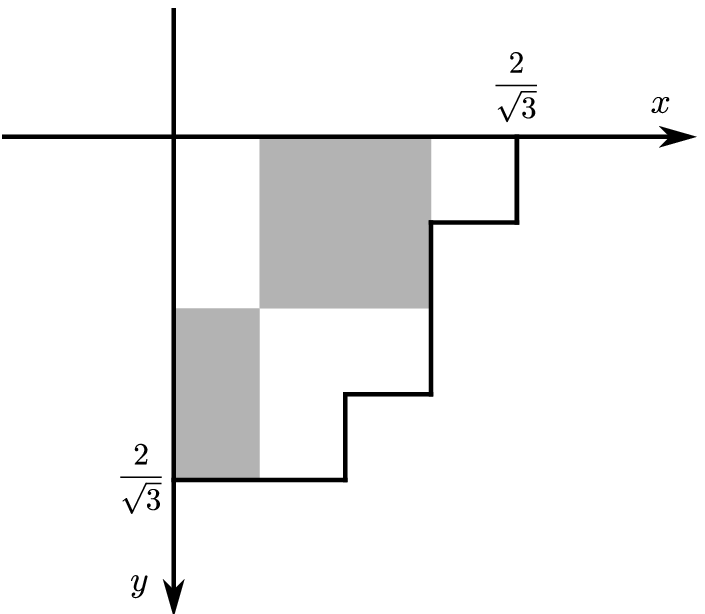}
\end{center}
Then by formula \eqref{pillowcasehooksformula} and Lemma \ref{lem:hookapproximation},
\[\log\w(\lambda)= 2 n\left(\int_O-\int_E\right)\log\left(\sqrt n\,h_F(x,y)\right)\,dx\,dy +\sum_{\square\in\lambda}(-1)^{h_\square}c(\lambda).\]
By item \eqref{LemHalfOfHooksAreEven} of Lemma \ref{LemTwoQuotients}, the area of $O$ is the same as the area of $E$, so the contributions of $\log \sqrt n$ cancel out and we are left with
\[\log\w(\lambda)= 2n\left(\int_O-\int_E\right)\log h_F(x,y) \,dx\,dy +\sum_{\square\in\lambda}(-1)^{h_\square}c(\lambda).\]

The domains $E$ and $O$ can be very complicated, but in the case of balanced partitions we can exploit item \eqref{LemOddAndEvenHooksPositions} of Lemma \ref{LemTwoQuotients} as follows.

First, rotate the domain $135^\circ$ and let $L:\R\to\R_+$ be the continuous function that gives the rim of the rotated domain, as in the following diagram:
\begin{center}
\includegraphics{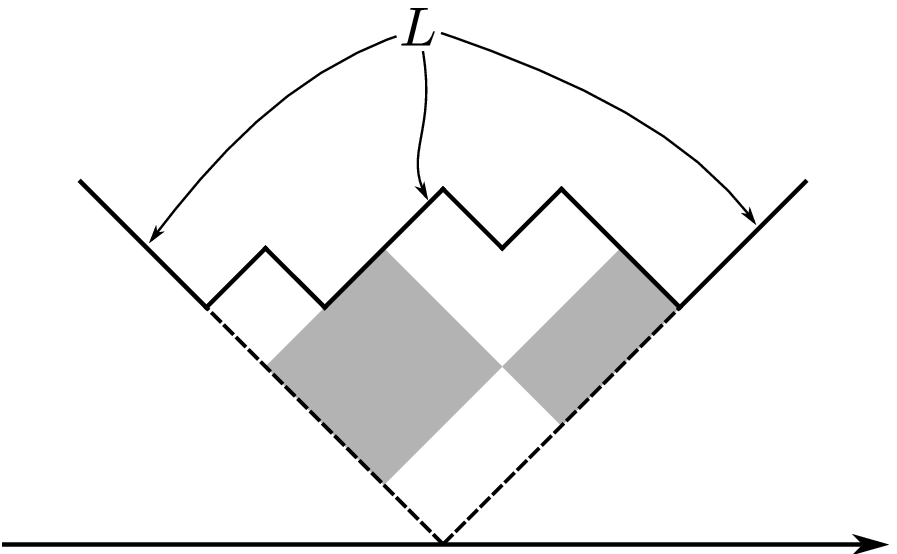}
\end{center}
Note that $L'=\pm1$ where it is defined, and $L(x)=|x|$ for $|x|$ large enough. Denote by $O'$ and $E'$ the rotated versions of $O$ and $E$.

It is now convenient to change variables to $s$ and $t$ such that $x=\frac1{\sqrt 2}(L(s)-s)$ and $y=\frac{1}{\sqrt 2}(L(t)+t)$. To explain how this change of variables works we have the following diagram.
\begin{center}
\includegraphics{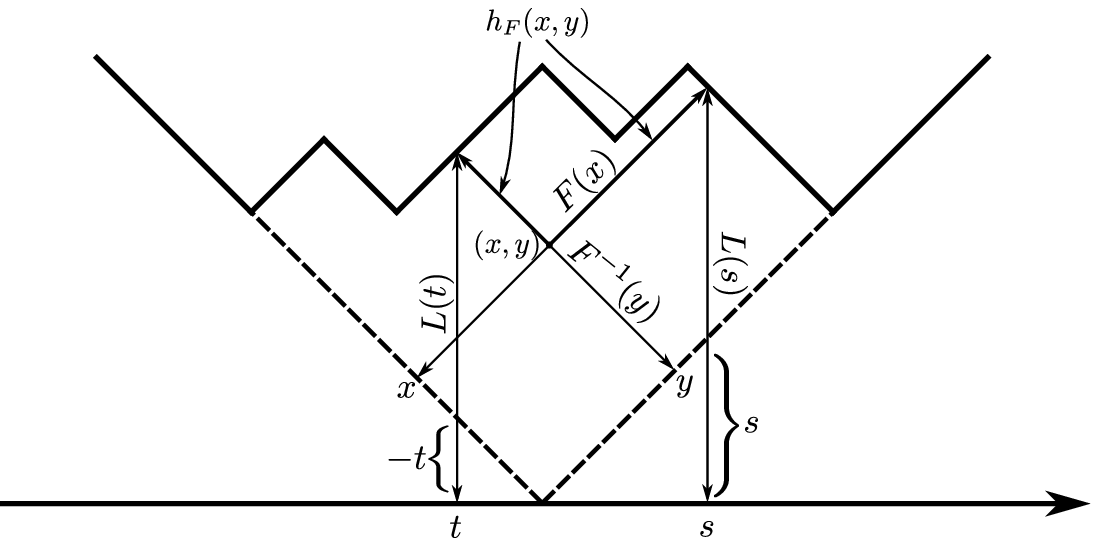}
\end{center}
In the diagram, the point $(x,y)$ is at the center, and through it pass two diagonals of lengths $F(x)$ and $F^{-1}(y)$ intersecting the diagonal axes at coordinates $x$ and $y$, respectively. From their intersections with the rim of the rotated Young diagram, we have vertical segments of lengths $L(s)$ and and $L(t)$.
From the diagram it is also clear that $h_F(x,y)=\sqrt 2(s-t)$, and that our integration domains $O$ and $E$ will be subsets of $\{t<s\}$.
The transform $(s,t)\mapsto (x,y)$ is not bijective, but it is surjective, and it is injective in the region in which its Jacobian is nonzero. The determinant of its Jacobian is
 \[-\frac12\left(1-L'(s)\right)\left(1+L'(t)\right),\]
which is supported in a compact set because $L(x)=|x|$ for large $|x|$.
If we denote by $\widetilde E$ and $\widetilde O$ the images of $E$ and $O$ under this change of variables, our integral becomes:
\[n\left(\int_{\widetilde O}-\int_{\widetilde E}\right)\left(\log \sqrt 2(s-t)\right)(1-L'(s))(1+L'(t)) \,ds\,dt.\]

Let $(\alpha,\beta)$ be the 2-quotient of $\lambda$, and let $L_\alpha$ and $L_\beta$ be the functions describing the rims of the Young diagrams of $\alpha$ and $\beta$, rotated $135^\circ$ and rescaled by $1/\sqrt n$ (so that together they have area $1/2$, by item \eqref{LemAlphaPlusBetaIsLambdaOverTwo} of Lemma \ref{LemTwoQuotients}).
Recall from the description given in Section \ref{twoquotientssection} that the information  of the derivative $L'$ is split into $L'_\alpha$ and $L'_\beta$.
Indeed, the black and white pebbles of the Maya diagram determine $L'$, and half of them correspond to $\alpha$ and determine $L'_\alpha$, while the other half correspond to $\beta$ and determine $L'_\beta$. In other words, we have
\begin{equation}\label{eq:partsofdomain}
L'(s)=\left\{
\begin{array}{ll}
L'_\alpha\left(s-\frac {k}{2\sqrt{2n}}\right), &\textrm{for $s\in \frac{1}{\sqrt{2n}}(k-1,k)$, $k\in 2\Z$}, \\
L'_\beta\left(s-\frac {k}{2\sqrt{2n}}\right), & \textrm{for $s\in \frac{1}{\sqrt{2n}}(k,k+1)$, $k\in 2\Z$}.
\end{array}
\right.
\end{equation}
Item \eqref{LemOddAndEvenHooksPositions} of Lemma \ref{LemTwoQuotients} then implies that the integral over $\widetilde E$ essentially involves the interaction of each component of the 2-quotient with itself, so it becomes
\begin{equation}\label{eq:evenintegral}
-n\int_{t<s} \left(\log\sqrt2(s-t)\right)\left(\left(1-L'_\alpha(s)\right)\left(1+L'_\alpha(t)\right) +\left(1-L'_\beta(s)\right)\left(1+L'_\beta(t)\right)\right)\,ds \,dt.
\end{equation}
Similarly, the integral over $\widetilde O$ involves the interactions of the components of the 2-quotient with each other, and becomes
\begin{equation}\label{eq:oddintegral}
n\int_{t<s} \left(\log\sqrt2(s-t)\right)\left(\left(1-L'_\alpha(s)\right)\left(1+L'_\beta(t)\right) +\left(1-L'_\beta(s)\right)\left(1+L'_\alpha(t)\right)\right)\,ds \,dt.
\end{equation}
Note that there is an implicit change of coordinates here, that would handle the translations by $k/\left(2\sqrt{2n}\right)$ in formula \eqref{eq:partsofdomain} and the split of the domains. Instead of translating and separating the parts of the domain, we work directly with the components of the 2-quotient.

Adding \eqref{eq:evenintegral} and \eqref{eq:oddintegral}, and simplifying, we obtain
\[n\int_{t<s}\left(\log\sqrt 2(s-t)\right)\Delta'(s)\Delta'(t)\,ds\,dt,\]
where $\Delta(s)=L_\alpha(s)-L_\beta(s)$. We can also write this as
\[\Theta(L_\alpha,L_\beta)=\frac n2\int_{\R^2}\left(\log\sqrt 2|s-t|\right)\Delta'(s)\Delta'(t)\,ds\,dt.\]
Integrating by parts twice, we can rewrite this as:
\[\Theta(L_\alpha,L_\beta)=-\frac n2\|\Delta\|^2,\]
where the \emph{Sobolev norm} $\|\cdot\|$ is given by
\[\|\Delta\|^2=\int_{\R^2}\left(\frac{\Delta(s) -\Delta(t)}{s-t}\right)^2 ds\,dt.\]

In sum we have, as a consequence of Lemmas \ref{lem:hookapproximation} and \ref{lem:asymptoticsofremainder},
\begin{prop}\label{lem:sobolev}
As $|\lambda|\to\infty$,
\[\w(\lambda)=\exp\left(-\frac {|\lambda|}2\|\Delta\|^2+O(\sqrt{|\lambda|})\right),\]
where $\Delta=L_\alpha- L_\beta$ is the difference of the functions describing the (rescaled) rims of the Young diagrams of the components $\alpha$ and $\beta$ of the 2-quotient of $\lambda$.
\end{prop}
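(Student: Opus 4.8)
The plan is to assemble the estimate directly from the hook-length formula \eqref{pillowcasehooksformula} and the two lemmas just proved. First I would take logarithms in \eqref{pillowcasehooksformula}, writing $\log\w(\lambda)=2\bigl(\sum_{h_\square\text{ odd}}-\sum_{h_\square\text{ even}}\bigr)\log h_\square$, and replace each term $\log h_\square$ by its integral approximation from Lemma \ref{lem:hookapproximation}. The collected remainders then contribute $\sum_{\square\in\lambda}(-1)^{h_\square}c(h_\square)$, which is $O(\sqrt{|\lambda|})$ by Lemma \ref{lem:asymptoticsofremainder}; this is precisely the error budget in the statement. Because exactly half of the hook lengths of a balanced $\lambda$ are even (item \eqref{LemHalfOfHooksAreEven} of Lemma \ref{LemTwoQuotients}), the contributions of the constant $\log\sqrt n$ sitting inside $\log(\sqrt n\,h_F)$ cancel between the odd-hook region $O$ and the even-hook region $E$, leaving $2n\bigl(\int_O-\int_E\bigr)\log h_F(x,y)\,dx\,dy$ up to the $O(\sqrt{|\lambda|})$ error.

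The main work is to identify this difference of integrals with the Sobolev quantity. I would rotate the rescaled diagram by $135^\circ$, so that its rim is described by a function $L$ with $L'=\pm1$ and $L(x)=|x|$ for $|x|$ large, and pass to coordinates $(s,t)$ with $x=\tfrac1{\sqrt2}(L(s)-s)$ and $y=\tfrac1{\sqrt2}(L(t)+t)$, under which $h_F(x,y)=\sqrt2\,(s-t)$ and the Jacobian equals $-\tfrac12(1-L'(s))(1+L'(t))$, supported in a compact set. The decisive input is item \eqref{LemOddAndEvenHooksPositions} of Lemma \ref{LemTwoQuotients}: the two endpoints of an even hook lie in the same component of the $2$-quotient and the endpoints of an odd hook lie in opposite components. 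Via the splitting \eqref{eq:partsofdomain} of $L'$ into the pieces coming from $L'_\alpha$ and $L'_\beta$, this says the integral over $\widetilde E$ is the ``$\alpha$-with-$\alpha$ plus $\beta$-with-$\beta$'' interaction \eqref{eq:evenintegral}, while the integral over $\widetilde O$ is the ``$\alpha$-with-$\beta$'' cross interaction \eqref{eq:oddintegral}. Subtracting, the mixed terms combine into
\[n\int_{t<s}\bigl(\log\sqrt2\,(s-t)\bigr)\,\Delta'(s)\,\Delta'(t)\,ds\,dt,\qquad \Delta=L_\alpha-L_\beta,\]
and since $\Delta$ has compact support (both $L_\alpha$ and $L_\beta$ equal $|x|$ far out), symmetrizing and integrating by parts twice turns the logarithmic kernel into $-\tfrac12\|\Delta\|^2$ — the additive constant $\log\sqrt2$ drops out because $\int_\R\Delta'=0$ — which yields exactly $-\tfrac{|\lambda|}2\|\Delta\|^2$.

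The step I expect to be the main obstacle is making the change of variables $(s,t)\mapsto(x,y)$ fully rigorous: it is only surjective, not injective, and is injective merely where its Jacobian is nonzero, so one must argue that the non-injective locus either contributes nothing or is absorbed by the $(1\mp L')$ factors, and one must carry the bookkeeping of the translations by $k/(2\sqrt{2n})$ in \eqref{eq:partsofdomain} and the interleaving of the two components of the $2$-quotient without generating error beyond $O(\sqrt{|\lambda|})$. A secondary point is to confirm that the per-hook bound $|c(h_\square)|\le c(1)$ from Remark \ref{rmk:approximation}, summed over the $O(\sqrt{|\lambda|})$ small odd hooks in the staircase worst case, stays $O(\sqrt{|\lambda|})$ uniformly over balanced $\lambda$; this is exactly Lemma \ref{lem:asymptoticsofremainder}, so it causes no further trouble.
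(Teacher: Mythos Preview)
Your proposal is correct and follows essentially the same route as the paper: take logarithms in \eqref{pillowcasehooksformula}, replace each $\log h_\square$ by the integral of Lemma \ref{lem:hookapproximation} with remainder controlled by Lemma \ref{lem:asymptoticsofremainder}, cancel the $\log\sqrt n$ contributions via item \eqref{LemHalfOfHooksAreEven}, rotate and change variables to $(s,t)$, use item \eqref{LemOddAndEvenHooksPositions} to separate the even- and odd-hook integrals into the self- and cross-interactions of $L_\alpha'$ and $L_\beta'$, and integrate by parts twice to reach $-\tfrac{|\lambda|}{2}\|\Delta\|^2$. The obstacles you anticipate (non-injectivity of $(s,t)\mapsto(x,y)$ where the Jacobian vanishes, and the bookkeeping of the translations in \eqref{eq:partsofdomain}) are exactly the points the paper glosses over with the remark that the transform ``is injective in the region in which its Jacobian is nonzero'' and that ``there is an implicit change of coordinates here.''
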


\begin{rmk}
The partitions with identical components $\alpha=\beta$ (and hence $\Delta=0$) of the 2-quotient $(\alpha,\beta)$ are easily characterized: they are precisely the ones that can be constructed concatenating $2\times2$ blocks
\[\yng(2,2)\]
and they are hence in one-to-one correspondence with the partitions of size $n/4$.
\end{rmk}

\section{An application of the Theorem of Meinardus}\label{sec:meinardus}
\begin{lem}\label{lem:meinardusforZ}
As $n\to\infty$, we have the asymptotic behavior
\[Z=\sum_{\lambda}\w(\lambda)\sim\frac1{2^{1/8}3^{3/8}}\frac{e^{\pi\sqrt{\frac n6}}}{n^{7/8}}.\]
where the sum is over all $\lambda$ of size $|\lambda|=n$.
\end{lem}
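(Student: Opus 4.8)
The plan is to derive Lemma~\ref{lem:meinardusforZ} as a direct application of Meinardus's theorem on the asymptotics of coefficients of infinite products. The input is the identity obtained in the proof of Lemma~\ref{lem:formulafornmpointfunction} (cf.\ \cite[Section~3.2.4]{pillow}),
\[\sum_\lambda q^{|\lambda|}\w(\lambda)=\trace q^H\pcop=\prod_{i=1}^{\infty}\bigl(1-q^{2i}\bigr)^{-1/2},\]
so that the quantity $Z=\sum_{|\lambda|=n}\w(\lambda)$ of the statement is precisely the coefficient $\bigl[q^{n}\bigr]\prod_{i\ge1}(1-q^{2i})^{-1/2}$. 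Since $\w$ is supported on balanced partitions, which have even size, this coefficient vanishes for odd $n$; for $n=2N$ the substitution $t=q^{2}$ identifies it with $\bigl[t^{N}\bigr]f(t)$, where $f(t)=\prod_{i\ge1}(1-t^{i})^{-1/2}$. Passing to $f$ is not merely cosmetic: $f$ has a \emph{single} dominant singularity, at $t=1$, whereas $\prod_i(1-q^{2i})^{-1/2}$ is equally singular at $q=1$ and at $q=-1$, so Meinardus's theorem cannot be applied to it directly.

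Next I would set up Meinardus's theorem for $f(t)=\prod_{i\ge1}(1-t^{i})^{-a_i}$ with $a_i\equiv\tfrac12$. The associated Dirichlet series is $D(s)=\sum_i a_i i^{-s}=\tfrac12\zeta(s)$, which continues meromorphically to $\C$ with a single simple pole, at $s=\alpha=1$, of residue $A=\tfrac12$, and which is of polynomial growth on vertical strips; one records $D(0)=-\tfrac14$ and $D'(0)=-\tfrac14\log 2\pi$. The remaining (``minor arc'') hypothesis concerns a lower bound for $\Re g(y+2\pi i x)-g(y)$ with $g(y)=\tfrac12\sum_{i\ge1}e^{-iy}=\tfrac12(e^{y}-1)^{-1}$ and $0<|x|\le\tfrac12$; since this $g$ is exactly $\tfrac12$ of the one for the ordinary partition generating function, the bound is verified exactly as in that classical case, the factor $\tfrac12$ being harmless. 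With the hypotheses in place, Meinardus's theorem gives
\[\bigl[t^{N}\bigr]f(t)\sim C\,N^{\kappa}\exp\!\Bigl(\bigl(1+\tfrac1\alpha\bigr)\bigl[A\,\Gamma(\alpha+1)\zeta(\alpha+1)\bigr]^{1/(1+\alpha)}N^{\alpha/(\alpha+1)}\Bigr),\]
with $\kappa=\bigl(D(0)-1-\tfrac\alpha2\bigr)/(1+\alpha)$ and $C=e^{D'(0)}\,[2\pi(1+\alpha)]^{-1/2}\,[A\,\Gamma(\alpha+1)\zeta(\alpha+1)]^{(1-2D(0))/(2+2\alpha)}$.

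The remaining step is bookkeeping. Substituting $\alpha=1$, $A=\tfrac12$, $\Gamma(2)\zeta(2)=\tfrac{\pi^{2}}{6}$, $D(0)=-\tfrac14$, $D'(0)=-\tfrac14\log 2\pi$ makes the exponent equal to $2\bigl(\tfrac12\cdot\tfrac{\pi^2}{6}\bigr)^{1/2}\sqrt N=\pi\sqrt{N/3}$ and gives $\kappa=-\tfrac78$, while $C$ collapses to a rational multiple of $3^{-3/8}$ (all powers of $\pi$ cancel). Returning to $N=n/2$ turns $\pi\sqrt{N/3}$ into $\pi\sqrt{n/6}$, turns $N^{-7/8}$ into $2^{7/8}n^{-7/8}$, and multiplies the constant by $2^{7/8}$; collecting these factors yields the claimed asymptotic for $Z$.

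I do not expect a genuine obstacle here: once the product formula for $Z(q)$ is available, the lemma is a textbook invocation of Meinardus's theorem, and the only points needing care are the reduction $t=q^{2}$ (legitimate because $[q^{2N}]\prod_i(1-q^{2i})^{-1/2}=[t^N]f(t)$ identically, which also avoids the breakdown of Meinardus's minor-arc hypothesis at $q=-1$) and the tracking of the constants through that substitution. If one prefers to bypass the black box, one can run the circle-method/saddle-point argument on $f(t)$ directly, writing $\prod_{i\ge1}(1-t^i)=t^{-1/24}\eta(\tau)$ with $t=e^{2\pi i\tau}$ and using the modular transformation $\eta(-1/\tau)=\sqrt{-i\tau}\,\eta(\tau)$ to extract the $t\to1^{-}$ behaviour of $f$; this reproduces the same exponent, power of $n$, and constant.
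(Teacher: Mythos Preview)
Your proposal is correct and follows exactly the paper's own approach: cite the product formula $\sum_\lambda \w(\lambda)q^{|\lambda|}=\prod_{i\ge1}(1-q^{2i})^{-1/2}$, substitute $t=q^2$, and read off the asymptotics from Meinardus's theorem with $a_i=\tfrac12$, $D(s)=\tfrac12\zeta(s)$, $\alpha=1$, $A=\tfrac12$. The paper's proof is simply a terse listing of these same parameters (referring to Andrews's presentation), whereas you additionally justify why the substitution is needed and verify the minor-arc hypothesis; note also that your value $D'(0)=-\tfrac14\log 2\pi$ is the correct one (the paper records a sign that differs).
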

\begin{proof}
It was shown by A. Eskin and A. Okounkov \cite[Section 3.2.4]{pillow} that
\[\sum_\lambda \w(\lambda)q^{|\lambda|}=\prod_{i=1}^\infty (1-q^{2i})^{-1/2}.\]
We thus have the following parameters for the Theorem of G. Meinardus, as presented by G. Andrews \cite[Chapter 6]{andrews}: $q$ in the book is $q^2$ here, $a_n=\frac12$, $D(s)=\zeta(s)/2$, $\alpha=1$, $A=\frac 12$, $\kappa=-\frac78$, $D(0)=-\frac14$, $D'(0)=\frac14\log2\pi$.
\end{proof}

Denote by $p(n)$ the number of partitions of $n$. Recall that the Theorem of G. Meinardus also implies the asymptotics
\begin{equation}\label{eq:asymptoticspartitions}
p(n)\sim \frac1{4n\sqrt3}\exp\left(\pi\sqrt{\tfrac23 n}\right).
\end{equation}
See \cite[Theorem 6.3]{andrews}.
We thus have
\begin{prop}\label{prop:measureconcentration}
Let $\varepsilon>0$, and let $S_{\varepsilon,n}$ be the set of balanced partitions $\lambda$ of $n$ whose 2-quotients $(\alpha,\beta)$ satisfy $\|\Delta\|=\|L_\alpha-L_\beta\|>\varepsilon$ (in the notations of Section \ref{sec:variational}). Then the $\w$-probability of $S_{\varepsilon,n}$ is asymptotically of order $O\left(e^{-K\sqrt n}\right)$ as $n\to\infty$, for some $K>0$ that depends on $\varepsilon$.
\end{prop}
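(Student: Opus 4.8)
The plan is to bound the $\w$-probability of $S_{\varepsilon,n}$ directly from the definition
\[
\mathbb P_{\w,n}(S_{\varepsilon,n})=\frac{1}{Z_n}\sum_{\lambda\in S_{\varepsilon,n}}\w(\lambda),\qquad Z_n=\sum_{|\lambda|=n}\w(\lambda),
\]
estimating the numerator and the denominator separately from the asymptotics already established. Since $\w$ vanishes off the balanced partitions, $Z_n$ is the sum of $\w$ over the balanced partitions of $n$; in particular everything is $0/0$ for odd $n$, so we assume $n$ even.

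For the numerator I would use Proposition \ref{lem:sobolev} in the form: there is a constant $C$, \emph{uniform} over balanced $\lambda$ with $|\lambda|=n$, such that $\w(\lambda)\le\exp\!\left(-\tfrac n2\|\Delta\|^2+C\sqrt n\right)$; the uniformity is exactly what the worst-case (staircase) estimate in Lemma \ref{lem:asymptoticsofremainder} provides for the remainder $\sum_{\square\in\lambda}(-1)^{h_\square}c(h_\square)$. On $S_{\varepsilon,n}$ one has $\|\Delta\|^2>\varepsilon^2$, hence $\w(\lambda)\le e^{-\frac{\varepsilon^2}{2}n+C\sqrt n}$ for every $\lambda\in S_{\varepsilon,n}$. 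Since $|S_{\varepsilon,n}|\le p(n)$ and, by the Meinardus asymptotics \eqref{eq:asymptoticspartitions}, $p(n)=e^{O(\sqrt n)}$, this yields $\sum_{\lambda\in S_{\varepsilon,n}}\w(\lambda)\le e^{-\frac{\varepsilon^2}{2}n+O(\sqrt n)}$.

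For the denominator, Lemma \ref{lem:meinardusforZ} gives $Z_n\sim c_0\,n^{-7/8}e^{\pi\sqrt{n/6}}$ with $c_0>0$, so $Z_n^{-1}=e^{O(\sqrt n)}$ (indeed $Z_n^{-1}\to0$). Combining the two bounds,
\[
\mathbb P_{\w,n}(S_{\varepsilon,n})\le e^{-\frac{\varepsilon^2}{2}n+O(\sqrt n)},
\]
which for all large $n$ is at most $e^{-\frac{\varepsilon^2}{4}n}$, and in particular is $O\!\left(e^{-K\sqrt n}\right)$ with $K=\varepsilon^2/4$. The argument in fact delivers a bound exponential in $n$; I would record only the rate stated in the proposition.

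There is no real obstacle inside this argument: the content is entirely in the inputs — the variational estimate of Proposition \ref{lem:sobolev} and the two Meinardus-type asymptotics (for $Z_n$ and for $p(n)$). The only subtlety worth spelling out is the uniformity of the $O(\sqrt n)$ error in Proposition \ref{lem:sobolev} over \emph{all} balanced partitions of a given size $n$ (not merely along a fixed convergent sequence), which is precisely what Lemma \ref{lem:asymptoticsofremainder} supplies by bounding the relevant remainder by its value on the staircase, uniformly in $\lambda$.
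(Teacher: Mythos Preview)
Your proof is correct and follows exactly the same route as the paper: bound $|S_{\varepsilon,n}|$ by $p(n)$, use Proposition~\ref{lem:sobolev} to bound each $\w(\lambda)$ on $S_{\varepsilon,n}$, and control the normalizing constant via Lemma~\ref{lem:meinardusforZ} and the Hardy--Ramanujan asymptotics~\eqref{eq:asymptoticspartitions}. Your explicit remark on the uniformity of the $O(\sqrt n)$ remainder (via Lemma~\ref{lem:asymptoticsofremainder}) and your observation that the argument actually yields decay $e^{-cn}$ rather than merely $e^{-K\sqrt n}$ are both correct and go slightly beyond what the paper spells out.
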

\begin{rmk}
It is the case that $K\to0$ as $\varepsilon\to+0$.
\end{rmk}
\begin{proof}
Since we have a bound for the cardinality
\[|S_{\varepsilon,n}|\leq p(n)\]
and the $\w$-probability of each partition $\lambda$ is given by
\[\frac{\w(\lambda)}{\sum_{|\mu|=|\lambda|} \w(\mu)},\]
the result follows from Proposition \ref{lem:sobolev}, Lemma \ref{lem:meinardusforZ}, and the asymptotic relation \eqref{eq:asymptoticspartitions}.
\end{proof}
\begin{cor}\label{cor:expectation}
 Let $f(\lambda)$ and $g(\lambda)$ be two functions with $|f(\lambda)|$ and $|g(\lambda)|$ increasing at most polynomially as $|\lambda|\to\infty$, with finite $\w$-expectations, and continuous in the space of piecewise continuous functions with topology of the supremum norm (i.e., we need $f$ and $g$ to be continuous with respect to the functions describing the rim of the partitions).
 Assume that $g$ grows at most polynomially:
 \[g(\lambda)=O(|\lambda|^b) \textrm{ as $|\lambda|\to\infty$}\]
 for some $b>0$. Then
\[\lim_{n\to\infty}\frac{1}{n^{b}} \left(\langle f(\alpha)g(\beta)\rangle_\w-\langle f(\alpha)g(\alpha)\rangle_\w\right)=0,\]
where $(\alpha,\beta)$ is the 2-quotient of the partition $\lambda$ over which the sum of the expectation is taken.
\end{cor}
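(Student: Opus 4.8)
The plan is to rewrite the quantity of interest as $\frac1{n^{b}}\big\langle f(\alpha)\,\big(g(\beta)-g(\alpha)\big)\big\rangle_\w$ and to estimate the expectation on the right by splitting the sum over balanced partitions $\lambda$ of size $n$ according to the dichotomy of Proposition \ref{prop:measureconcentration}: fix $\varepsilon>0$ and separate the terms with $\lambda\in S_{\varepsilon,n}$ from those with $\|L_\alpha-L_\beta\|\le\varepsilon$. (Non-balanced $\lambda$ are irrelevant, since $\w(\lambda)=0$ there.)

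On $S_{\varepsilon,n}$, the polynomial-growth hypotheses together with $|\alpha|,|\beta|\le|\lambda|/2$ give a uniform bound $\big|f(\alpha)\big(g(\beta)-g(\alpha)\big)\big|=O(n^{C})$ for a fixed exponent $C$, while Proposition \ref{prop:measureconcentration} bounds the $\w$-probability of $S_{\varepsilon,n}$ by $O(e^{-K\sqrt n})$; hence this block of $\frac1{n^{b}}\langle\,\cdot\,\rangle_\w$ is $O\!\big(n^{C}e^{-K\sqrt n}\big)\to0$, the exponential decay absorbing all polynomial factors. On the complement I would first intersect with an event $T_n$ of $\w$-probability tending to $1$ on which $L_\alpha$, and therefore $L_\beta$, stay in a fixed compact family $\mathcal K$ of rescaled rim functions; such a $T_n$ is produced by the limit-shape concentration of Section \ref{sec:limitshape} combined with the Arzel\`a--Ascoli theorem, since rescaled rims are $1$-Lipschitz and, with overwhelming probability, have uniformly bounded effective support. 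On $\mathcal K$ the continuous functionals $f$ and $g$ are uniformly continuous. It then remains to convert $\|L_\alpha-L_\beta\|\le\varepsilon$ (the Sobolev, $\dot H^{1/2}$-type, seminorm appearing in Proposition \ref{lem:sobolev}) into a uniform bound $\|L_\alpha-L_\beta\|_\infty\le\omega(\varepsilon)$ with $\omega(\varepsilon)\downarrow0$; this is an interpolation inequality using that $L_\alpha-L_\beta$ is Lipschitz, compactly supported, and of bounded total variation, so that control of its $\dot H^{1/2}$ seminorm together with these a priori bounds controls its supremum. Uniform continuity of $g$ then bounds $|g(\beta)-g(\alpha)|$, at the relevant normalized scale, by a quantity that tends to $0$ with $\varepsilon$, uniformly on $T_n\cap S_{\varepsilon,n}^{c}$; combined with the bound on $f$, the complementary block of $\frac1{n^{b}}\langle\,\cdot\,\rangle_\w$ is at most a constant times $\omega(\varepsilon)$, plus lower-order terms. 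Sending $n\to\infty$ and then $\varepsilon\to0$ completes the argument.

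The main obstacle is the estimate on the ``good'' block, and inside it the passage from Sobolev-norm control of $L_\alpha-L_\beta$ to a genuinely uniform closeness of the shapes $\alpha$ and $\beta$, and then to closeness of $g(\alpha)$ and $g(\beta)$ after the polynomial normalization. Since the continuity of $f$ and $g$ is assumed only pointwise in rim-function space, some compactness input---here the limit shape of Section \ref{sec:limitshape}---is indispensable in order to upgrade it to the uniform control needed to bound a single expectation over \emph{all} partitions of size $n$ at once; keeping the support of the relevant rim functions under control, so that the interpolation inequality is uniform, is where most of the technical effort goes.
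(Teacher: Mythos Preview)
Your approach is essentially the paper's: rewrite the difference as $\frac{1}{n^b}\langle f(\alpha)(g(\beta)-g(\alpha))\rangle_\w$, split the sum according to whether $\|L_\alpha-L_\beta\|$ is large or small, kill the ``bad'' block with Proposition~\ref{prop:measureconcentration} plus polynomial growth, and use continuity of $g$ on the ``good'' block. The paper's proof is terser---it simply asserts a $\delta$ with $\|L_\alpha-L_\beta\|<\delta\Rightarrow|g(\alpha)-g(\beta)|<\varepsilon n^b$ and does not spell out the Sobolev-to-sup-norm passage or the compactness you invoke---so your version is a more careful execution of the same argument rather than a different one.
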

\begin{proof}
Let $\varepsilon>0$.
Let $\delta>0$ be such that if $\|L_\alpha-L_\beta\|<\delta$ then $|g(\alpha)-g(\beta)|<\varepsilon n^b$.
We want to show that the following tends to 0:
\begin{align*}
\frac{1}{Zn^b}\sum_{|\lambda|=n} \w(\lambda)f(\alpha)\left(g(\beta)-g(\alpha)\right).
\end{align*}
Here, the summation can be split into two parts
\[
\sum_{\|L_\alpha-L_\beta\|<\delta} +
\sum_{\|L_\alpha-L_\beta\|\geq\delta}
\]
The later can be bounded easily using Proposition \ref{prop:measureconcentration}: we get
\[\frac 1{Zn^b}\sum_{\|L_\alpha-L_\beta\|\geq\delta}O
\left(e^{-K_\delta\sqrt n}\right)f(\alpha)\left(g(\beta)-g(\alpha)\right).\]
Since $f$ and $g$ grow polynomially, this tends to 0 as $n\to\infty$.
On the other hand, the first part in the summation above goes like
\[\frac1{Zn^b}\sum_{\|L_\alpha-L_\beta\|<\delta}
O\left(e^{-K_{\delta}\sqrt n}\right)f(\alpha)\varepsilon n^b.\qedhere\]
\end{proof}
\begin{rmk}
The results in this section, together with those of Section \ref{sec:limitshape}, imply that the components $\alpha$ and $\beta$ of the 2-quotient also approach the limit shape corresponding to the uniform distribution, for large partitions $\lambda$.
\end{rmk}

\section{Vanishing of the first approximation of the volumes}
\label{sec:expectationasympt}
We consider the distribution $\w=\w_n$ induced by the pillowcase weights on the set of partitions of $n$, for $n$ even. In this section we discuss the term of highest degree in the $\w$-expectation of formula \eqref{eq:formulaforg} for $\g_\nu$, which turns out to vanish.

We will prove that the term of top degree of the expectations
\[\langle\g_\nu\rangle_\w\]
can be computed with the following formula:
\begin{equation}\label{eq:expectationofg}
\langle\g_\nu\rangle_\w\approx\frac{K^{|\nu|}}{\mathfrak z(\nu)}
\sum_{|\mu|=|\nu|}\sigma_\mu\chi^\mu(\nu)\frac{\dim a}{|a|!}\frac{\dim b}{|b|!}
\end{equation}
where $(a,b)$ is the 2-quotient of $\mu$, and the sum is over all balanced partitions $\mu$ of size $|\nu|$, and $K$ is a constant that depends on the scaling. The approximation \eqref{eq:expectationofg} vanishes: using formula \eqref{eq:dominoformula} and the orthogonality of the characters, we have
\[
\sum_{|\mu|=|\nu|}\sigma_\mu\chi^\mu(\nu)\frac{\dim a}{|a|!}\frac{\dim b}{|b|!}=\frac{1}{(|\nu|/2)!}\sum_\mu
\chi^\mu(\nu)\chi^\mu(2,\dots,2)=0.
\]
This is why we have to also analyze the terms in lower degree; this is mostly future work, but we discuss it briefly in Section \ref{sec:nextterm}.

Let us show why \eqref{eq:expectationofg} is true. Taking another look at formula \eqref{eq:formulaforg} for $\g_\nu$, we see that we really are interested in the asymptotics of
\[\frac{2^{|\nu|/2}}{\mathfrak z(\nu)} \sum_\mu\sigma_\mu\,\chi^\mu(\nu)\,\langle\s_a(\alpha)\,\s_b(\beta) \rangle_\w,\]
where $(\alpha,\beta)$ stand for the 2-quotients of the partitions $\lambda$ over which summation occurs in the $\w$-expectation, and $(a,b)$ is as above.

By Corollary \ref{cor:expectation}, we can substitute the expectation above by
\[\frac{2^{|\nu|/2}}{\mathfrak z(\nu)} \sum_\mu\sigma_\mu\,\chi^\mu(\nu)\,\langle\s_a(\alpha)\,\s_b(\alpha) \rangle_\w.\]
The highest degree terms of $\s_\lambda$ are precisely the regular Schur functions $s_\lambda$; see \cite{okounkovolshanski}\footnote{The work of G. Olshanski, A. Regev and A. Vershik \cite{frobeniusschur} gives the full expansion of $\s_\mu$ in terms of the regular Schur functions $s_\lambda$.}. Thus the relation that defines the Littlewood-Richardson numbers (see Section \ref{littlewoodrichardson}) should hold to the highest degree at least, and we can write the above as
\[\frac{2^{|\nu|/2}}{\mathfrak z(\nu)} \sum_\mu\sigma_\mu\,\chi^\mu(\nu)\sum_\eta c^\eta_{ab}\langle\s_\eta(\alpha) \rangle_\w,\]
where $c^\eta_{ab}$ denotes the usual Littlewood-Richardson numbers.

Moreover, A. Okounkov and G. Olshanski \cite{okounkovolshanski} proved that
\[\s_\eta=\sum_{\rho}\frac{\chi^\eta(\rho)}{\mathfrak z(\rho)}\p_\rho+\textrm{lower order terms},\]
where the sum is over all partitions $\rho$ of size $|\eta|$, so we really are
interested in the asymptotics of
\begin{equation}\label{eq:substepschurtopower}
\frac{2^{|\nu|/2}}{\mathfrak z(\nu)} \sum_\mu\sigma_\mu\,\chi^\mu(\nu)\sum_\eta c^\eta_{ab}\left\langle \sum_\rho\frac{\chi^\eta(\rho)}{\mathfrak z(\rho)}\p_\rho(\alpha) \right\rangle_\w,
\end{equation}
where the new sums are over all partitions $\rho$ of size $|a|$ and all partitions $|\eta|$ of size $|b|$.

A. Eskin and A. Okounkov \cite[Section 3.3.5]{pillow} proved the following about the asymptotics of the expectations of the power functions as the size $n$ of the partitions considered grows:
\begin{equation*}
\langle\p_\mu\rangle_\w=O\!\left(n^{\frac{|\mu|+\ell(\mu)}2}\right)\quad\textrm{as  $n\to\infty$.}
\end{equation*}
In our case, a similar proof (using the generating function of Lemma \ref{lem:formulafornmpointfunction}) shows that
\begin{equation}\label{eq:asymptoticdegree}
\langle\p_\mu(\alpha)\rangle_\w =O\!\left(n^{\frac{|\mu|+\ell(\mu)}2}\right)\quad\textrm{as  $n\to\infty$.}
\end{equation}
This means that the terms that grow fastest in \eqref{eq:substepschurtopower} are precisely those that maximize the number $|\rho|+ \ell(\rho)$. Since $|\rho|=|a|$ is fixed, only  $\ell(\rho)$ varies. The maximum is thus achieved when $\rho$ equals the partition $(1,1,\dots,1)$ of the corresponding length.
The highest degree term of \eqref{eq:substepschurtopower} is then
\begin{equation}\label{eq:fromthisformula}
\frac{2^{|\nu|/2}}{\mathfrak z(\nu)}\sum_\mu\sigma_\mu\chi^\mu(\nu)
\sum_\eta c^\eta_{ab}\frac{\chi^\eta(1^{|\nu|/2})}{\mathfrak z(1^{|\nu|/2})}
\left\langle \p_1(\alpha)^{|\eta|}\right\rangle_\w.
\end{equation}
Since $\p_1(\alpha)=|\alpha|$, and since by Proposition \ref{lem:sobolev} $|\alpha|\approx|\beta|\approx n/4$ with very high probability\footnote{This also follows directly from the result in Section \ref{sec:quasimodularitytwoquotients}: from the form of the $(n,m)$-point function found in Proposition \ref{lem:formulafornmpointfunction}, it is clear that the expected sizes of $\alpha$ and $\beta$ (i.e., the expectations $\langle\p_1(\alpha)\rangle_{\w,q}$ and $\langle\p_1(\beta)\rangle_{\w,q}$) are the same, and also that they coincide, up to rescaling, with $\langle\p_1\rangle_{\w,q}$.}, for all $k>0$,
\[\left\langle\p_1^k(\alpha)\right\rangle_\w=\langle\p_1(\alpha)\rangle_\w^k\approx\left(\frac n4\right)^k.\]
Also $\mathfrak z(1^m)=m!$ and $\chi^\eta(1,1,\dots,1)=\dim\eta$, so if we let
\[K=\sqrt{2}\langle\p_1(\alpha)\rangle_\w,\]
we obtain, from \eqref{eq:fromthisformula},
\begin{equation}
\frac{K^{|\nu|}}{\mathfrak z(\nu)}\sum_\mu\sigma_\mu\chi^\mu(\nu)
\sum_\eta c^\eta_{ab}\frac{\dim\eta}{|\eta|!}.
\end{equation}

Formula \eqref{eq:expectationofg} now follows from the following
\begin{lem}\label{lem:identidadcs}
For any two (possibly empty) partitions $a$ and $b $,
\[\sum_\eta c^\eta_{a b}\dim\eta=\binom{|a|+|b|}{|a|}\dim a\dim b\]
where the sum is taken over all partitions $\eta$ of size $|a|+|b|$, and $c^\eta_{ab}$ denotes the Littlewood-Richardson coefficients (see Section \ref{littlewoodrichardson}).
\end{lem}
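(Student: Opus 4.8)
The plan is to prove this identity by interpreting both sides representation-theoretically. The left-hand side $\sum_\eta c^\eta_{ab}\dim\eta$ is the dimension of the induced representation $\ind_{S(|a|)\times S(|b|)}^{S(|a|+|b|)}(V_a\boxtimes V_b)$, since the Littlewood--Richardson rule says precisely that $V_a\boxtimes V_b$ induced up to $S(|a|+|b|)$ decomposes as $\bigoplus_\eta (V_\eta)^{\oplus c^\eta_{ab}}$. On the other hand, the dimension of an induced representation $\ind_H^G W$ is $[G:H]\dim W$. Taking $G=S(|a|+|b|)$, $H=S(|a|)\times S(|b|)$, and $W=V_a\boxtimes V_b$ (so $\dim W=\dim a\,\dim b$), we get
\[
\dim\ind_H^G W = \frac{(|a|+|b|)!}{|a|!\,|b|!}\,\dim a\,\dim b = \binom{|a|+|b|}{|a|}\dim a\,\dim b,
\]
which is exactly the right-hand side. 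So the whole proof is: invoke the Littlewood--Richardson rule (already referenced via Section~\ref{littlewoodrichardson}) to identify $\sum_\eta c^\eta_{ab}V_\eta$ with the induced module, then compute its dimension in the elementary way.

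Alternatively, and perhaps more in the spirit of the chapter's repeated use of the characteristic map $\ch$, one can argue on the level of symmetric functions: the numbers $c^\eta_{ab}$ are defined by $s_a s_b=\sum_\eta c^\eta_{ab}s_\eta$, and under $\ch$ one has $\ch\chi^\eta=s_\eta$ while $\dim\eta=\chi^\eta(1^{|\eta|})$, i.e. $\dim\eta$ is the coefficient extracted by pairing $\ch\chi^\eta$ with $p_1^{|\eta|}/|\eta|!$ appropriately (more precisely, $\dim\eta = |\eta|!\,(s_\eta, p_1^{|\eta|})/\mathfrak z(1^{|\eta|})$, using $\ch^{-1}$ and evaluating the character at the identity). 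Then
\[
\sum_\eta c^\eta_{ab}\dim\eta = (|a|+|b|)!\,\bigl(s_as_b,\,\tfrac{p_1^{|a|+|b|}}{(|a|+|b|)!}\bigr)
= \bigl(s_a,\tfrac{p_1^{|a|}}{|a|!}\cdot|a|!\,\bigr)\cdot(\text{similarly for }b)\cdot\binom{|a|+|b|}{|a|},
\]
using multiplicativity of the inner product across the tensor decomposition $p_1^{|a|+|b|}=p_1^{|a|}p_1^{|b|}$ together with the multinomial identity, and $(s_a,p_1^{|a|})= \dim a/|a|!$. Either route works; I would present the first one since it is shorter and the induced-representation viewpoint is cleaner.

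The main obstacle is essentially bookkeeping: one must be careful that the Littlewood--Richardson coefficients as defined in Section~\ref{littlewoodrichardson} (via $s_as_b=\sum c^\eta_{ab}s_\eta$) are the same ones governing $\ind(V_a\boxtimes V_b)$, which is immediate from the fact that $\ch$ is a ring isomorphism sending $\ind$ to multiplication and $\chi^\eta$ to $s_\eta$ --- exactly the property used earlier in the proof of \eqref{eq:dimensionofskewdiagram}. There is no real analytic or combinatorial difficulty here; the identity is a standard consequence of Frobenius reciprocity and the branching/induction formalism, and the only thing to check is that the empty-partition edge cases ($a=\emptyset$ or $b=\emptyset$) are handled, which they trivially are since then one factor of the induced module is the trivial representation of the trivial group and the binomial coefficient degenerates to $1$.
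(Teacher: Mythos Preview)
Your proposal is correct. Your alternative argument---extracting the coefficient of $p_1^{|a|+|b|}$ from $s_as_b$---is exactly the paper's one-line proof: the paper expands $s_as_b$ on one side as $\sum_{\lambda,\mu}\frac{\chi^a(\lambda)\chi^b(\mu)}{\mathfrak z(\lambda)\mathfrak z(\mu)}p_{\lambda\cup\mu}$ and on the other as $\sum_{\eta,\gamma}c^\eta_{ab}\frac{\chi^\eta(\gamma)}{\mathfrak z(\gamma)}p_\gamma$, and compares coefficients of $p_{(1,\dots,1)}$. Your preferred route via $\dim\ind_{S(|a|)\times S(|b|)}^{S(|a|+|b|)}(V_a\boxtimes V_b)=[G:H]\dim a\,\dim b$ is a slight repackaging of the same computation through the characteristic map (as you yourself note); it is arguably more conceptual, while the paper's version has the virtue of being a pure symmetric-function identity requiring nothing beyond the expansion $s_\lambda=\sum_\rho \chi^\lambda(\rho)p_\rho/\mathfrak z(\rho)$ already recorded in the appendix.
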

\begin{proof}
Compare the coefficients of the symmetric power function $p_{(1,\dots,1)}$ in the expansions of both sides of
\[\sum_{\lambda,\mu}\frac{\chi^a(\lambda)\chi^b(\mu)}{\mathfrak z(\lambda)\mathfrak z(\mu)}p_{\lambda\cup\mu}=s_as_b=\sum_\eta c^\eta_{ab}s_\eta=\sum_{\eta,\gamma} c^\eta_{ab}\frac{\chi^\eta(\gamma)}{\mathfrak z(\gamma)}p_\gamma.\qedhere\]
\end{proof}
\begin{rmk}
In this proof, one can simply use the multiplicativity of the expectation (proved in Section \ref{sec:limitshape}) instead of introducing the Littlewood-Richardson numbers, but here we decided to give an proof that would be independent of the other result.
\end{rmk}

\section{Next term}\label{sec:nextterm}

We go back to expression \eqref{eq:substepschurtopower}, and according to the behavior given by \eqref{eq:asymptoticdegree}, we see that since the first term vanishes, the next one will be the one corresponding to $\rho=(2,1,1,\dots,1)$.

Appealing again to formula \eqref{eq:dimensionofskewdiagram} and to the Murnaghan-Nakayama rule (see Section \ref{murnaghannakayamarule}), we see that
\begin{align*}
 \chi^\eta(2,1,1,\dots,1)&=\chi^{(2)}(2)\dim \left(\eta/(2)\right)+\chi^{(1,1)}(2)\dim\left(\eta/(1,1)\right)\\
 &=\frac{\dim \eta}{|\eta|!/ 2}\left(\s_{(2)}(\eta)-\s_{(1,1)}(\eta)\right).
 \end{align*}
Also,
\[\mathfrak z(2,1^{|\eta|-2})=2(|\eta|-2)!,\]
and
\[\langle\p_{(2,1^{|\eta|-2})}(\alpha)\rangle_\w\approx\langle\p_2(\alpha\rangle_\w \left(\frac n4\right)^{|\eta|-2},\]
so the next term in the approximation is
\begin{equation*}
\frac{2K^{|\nu|-2}\langle\p_2\rangle_\w}{\mathfrak z(\nu)}\sum_\mu\sigma_\mu\chi^\mu(\nu)\sum_\eta c^{\eta}_{ab}\frac{\dim \eta}{(|\eta|-2)!}\frac{\s_{(2)}(\eta)-\s_{(1,1)}(\eta)}{|\eta|!}.
\end{equation*}
The sum is over all balanced partitions $\mu$ of size $|\nu|$ and over all partitions $\eta$ of size $|\nu|/2$.
We can simplify this further to
\begin{equation}\label{eq:finalformula}
\frac{2K^{|\nu|-2}\langle\p_2\rangle_\w}{\mathfrak z(\nu)}\sum_\mu\sigma_\mu\chi^\mu(\nu)v(a)v(b),
\end{equation}
where
\[
v(\lambda)=\frac{\dim \lambda}{(|\lambda|-2)!}\frac{\s_{(2)}(\lambda)-\s_{(1,1)}(\lambda)}{|\lambda|!}\]
The proof is analogous to that of Lemma \ref{lem:identidadcs}, but this time looking at the coefficients of $p_{(2,1,\dots,1)}$.
Alternatively, one can use the multiplicativity of the expectation proved in Section \ref{sec:limitshape}.

\begin{rmk}\label{rmk:degreeconsiderations}
We have by \eqref{eq:asymptoticdegree},
\[\langle\p_{(2,1^{|\nu|-2})}\rangle_w=O\left(n^{\frac{|\nu|+\ell(\nu)-1}2}\right).\] However, for $k=(\nu_1-2,\dots,\nu_\ell-2)$, $\ell=\ell(\nu)$, since $4g-4=\sum_ik_i$,
\[\dim \mathcal Q(k)=2g-2+\ell=\frac{|\nu|}2\]
if $\mathcal Q(k)$ is not self-resolvent (see Section \ref{moduli}) and
\[\dim \mathcal Q(k)=2g-1+\ell=\frac{|\nu|}2+1\]
if $\mathcal Q(k)$ is self-resolvent.
Hence we expect the term of highest degree of first non-vanishing term to be of this degree. In particular, we expect formula \eqref{eq:finalformula} to vanish in most cases.
\end{rmk}

\appendix
\chapter{Prerequisites}\label{prerequisitesappendix}
\section{Representations of the symmetric group}
\label{representationssection}
The \emph{symmetric group} $S(n)$ in $n$ elements is the set of bijections of the set of $n$ elements $\{1,2,\dots,n\}$. Its elements are known as \emph{permutations}. The group operation is the composition of permutations.

A \emph{representation} $(V,\rho)$ of the symmetric group $S(n)$ consists of a vector space $V$ and a homomorphism $\rho:S(n)\to GL(V)$ of $S(n)$ into the multiplicative group $GL(V)$ of invertible linear transformations on $V$. A representation is \emph{reducible} if there are subspaces $U,W\subset V$ such that $V= U\oplus W$ and $\rho$ factors through $GL(U)\times GL(W)$, that is, there exist representations $\rho_U:S(n)\to GL(U)$ and $\rho_W:S(n)\to GL(W)$ such that $\rho(\sigma)|_U=\rho_U(\sigma)$ and $\rho(\sigma)|_W=\rho_W(\sigma)$ for all $\sigma \in S(n)$. If two such subspaces fail to exist, we say that $(V,\rho)$ is an \emph{irreducible representation} of $S(n)$.

The \emph{dimension} of the representation $(V,\rho)$ is simply the dimension of the underlying vector space $V$.

Two representations $(U,\rho_U)$ and $(V,\rho_V)$ are \emph{isomorphic} if there exists a linear bijection $\phi:U\to V$ such that $\phi\circ\rho_V=\rho_U$. The existence of an isomorphism of representations implies that the underlying vector spaces $U$ and $V$ are of the same dimension.

The \emph{character} $\chi_V:S(n)\to \C$ of a representation $(V,\rho)$ is the function $\chi_V(\sigma)=\trace\rho(\sigma)$. The character is invariant under conjugation: for every $\sigma, \tau\in S(n)$,
\[\chi_V(\tau^{-1}\sigma\tau)=\trace\rho(\tau^{-1}\sigma\tau)=\trace\left(\rho(\tau)^{-1}\rho(\sigma)\rho(\tau)\right)=\trace \rho(\sigma)=\chi_V(\sigma).\]
It is thus constant over conjugacy classes.

The classification of the finite-dimensional representations of the symmetric group can be easily achieved using the standard theory of group characters; for details we refer the reader to \cite{fultonharris,sagan}, for example. In order to explain this classification, we must introduce some combinatorial objects.
A \emph{partition} of a positive integer $n$ is a way to write $n$ as a sum $n=a_1+a_2+\cdots+a_k$ of positive integers $a_1,a_2,\dots,a_k$, and is usually denoted $(a_1,a_2,\dots,a_k)$. Conventionally, we order the parts so that $a_1\geq a_2\geq\dots\geq a_k$. For example, the partitions of the number 5 are $(5)$, $(4,1)$, $(3,2)$, $(3,1,1)$, $(2,2,1)$, $(2,1,1,1)$, and $(1,1,1,1,1)$. It is also standard to abbreviate repetitions using exponents. For example, $(3,3,1,1,1,1,1)$ can be written $(3^2,1^5)$. We will use the notation $|\lambda|=\sum_i\lambda_i=n$, and $\ell(\lambda)$ will denote the number of parts of $\lambda$.

The irreducible representations of the symmetric group $S(n)$ can be indexed by partitions of $n$, that is, for each partition $\lambda$ of $n$ there exists an irreducible representation $(V_\lambda,\rho_\lambda)$ of $S(n)$. We will denote by $\chi^\lambda$ the character of the representation corresponding to the partition $\lambda$.

In the case of the symmetric group $S(n)$, the conjugacy classes are also indexed by partitions: the lengths of the cycles of an element of $S(n)$ are invariant under conjugation, they completely determine the conjugacy class, and they correspond to a partition of $n$. For example, the permutation in $S(5)$ that takes $1\mapsto 2$, $2\mapsto 5$, $5\mapsto 1$, $3\mapsto 4$, $4\mapsto 3$ is denoted in cycle notation by $(1,2,5)(3,4)$; it is composed of two cycles, $(1,2,5)$ and $(3,4)$, of lengths 3 and 2 respectively. Its conjugacy type corresponds to the partition $(3,2)$, and contains all other permutations with one cycle of length 3 and one of length 2, such as $(1,2,3)(4,5)$ and $(3,4,5)(1,2)$. The partition $\lambda$ that determines the conjugacy class $C_\lambda$ of a permutation is also known as its \emph{cycle type}. The number of elements $|C_\lambda|$ in a conjugacy class is equal to $|S(n)|/\mathfrak z(\lambda)$, where
\[\mathfrak z(\lambda)=\prod_{i=1}^\infty i^{m_i(\lambda)} m_i(\lambda)!\]
is the size of the centralizer of $C_\lambda$, and $m_i(\lambda)$ is the number of parts in $\lambda$ that are equal to $i$.

The only permutation of cycle type $(1,1,\dots,1)$ is the identity. A permutation $\sigma$ of cycle type $(2,2,\dots,2,1,1,\dots,1)$ is an involution, that is, $\sigma\circ\sigma$ is the identity. A permutation of cycle type $(2,2,\dots,2)$ is an involution without fixed points.

Since the characters are constant over conjugacy classes, we usually denote the character of a permutation of cycle type $\mu$ in the irreducible representation corresponding to the partition $\lambda$ by $\chi^\lambda(\mu)$. In the following sections, we study some related objects that provide ways to compute the values of these functions more or less explicitly, and that relate them to other combinatorial constructions.

We recall two general facts about group representations. The first one is the decomposition of the group algebra $\C G$ of any finite group $G$. It turns out to contain each of the irreducible representations of $G$ exactly as many times as their dimensions, that is,
\[\C G=\bigoplus_i V_i^{\oplus\dim V_i},\]
where the modules $V_i$ range over the different irreducible representations of $G$. See, for example, \cite[Section 1.10]{sagan}.

The second one is \emph{Schur's lemma}: Let $V$ and $W$ be two irreducible $G$-modules. If $\phi:V\to W$ is a $G$-homomorphism (i.e., $\phi$ commutes with the action of $G$), then either $\phi$ is a $G$-isomorphism, or $\phi$ is the zero map. See, for example, \cite[Section 1.6]{sagan}.

\section{Symmetric and shifted-symmetric functions}\label{symmetricfunctions}
 For variables $x_1,x_2,\dots,x_n$, a \emph{symmetric polynomial} is a polynomial $p(x_1,x_2,\dots\,x_n)$ with coefficients in the rational numbers $\Q$ such that $p$ is invariant by any reordering of the variables:
\[p(x_1,x_2,\dots,x_n)=p(x_{\sigma(1)},x_{\sigma(2)},\dots,x_{\sigma(n)})\]
for all $\sigma\in S(n)$. Denote by $P_n$ the set of all symmetric polynomials in $n$ variables.

Let $\psi:P_{n+1}\to P_n$ be the specialization to $n$ variables, given by
\[\psi(p)(x_1,x_2,\dots,x_n)=p(x_1,x_2,\dots,x_n,0).\]
Using the identification provided by $\psi$, we can form the inverse limit
\[\Lambda=\underleftarrow \lim \,P_n\]
whose elements are known as \emph{symmetric functions}, and consist of formal sums of monomials in countably many variables $x_1,x_2,\dots$ of the form
\[a x_{i_1}x_{i_2}\cdots x_{i_k}\]
for $a\in \Q$, $i_1,\dots,i_k\in \mathbb N$, where the $i_j$'s may be repeated. By construction, symmetric functions are invariant by any finite permutation. They form a ring, denoted $\Lambda$, that is obviously also a $\Q$-module.

A basis of the symmetric functions $\Lambda$ is given by the \emph{symmetric power functions} $p_\mu$. To define them, we let
\[p_k(x_1,x_2,\dots)=x_1^k+x_2^k+\cdots,\quad k=1,2,\dots,\]
and then extend this definition to partitions $\mu=(\mu_1,\mu_2,\dots,\mu_k)$ by letting
\[p_\mu=p_{\mu_1}p_{\mu_2}\cdots p_{\mu_k}.\]
In other words, $\Lambda$ is algebraically generated by the functions $p_k$ \cite{macdonald}.

While the symmetric power functions are by far the simplest linear basis of the symmetric functions $\Lambda$, there are a few other basis that are of interest. Among these is the one composed of the so-called \emph{Schur functions} $s_1,s_2,\dots$, which can be defined in many equivalent ways. One way is as the sum \cite{macdonald}
\begin{equation}\label{powertoschur}
s_\lambda=\sum_\rho \frac{\chi^\lambda(\rho)}{\mathfrak z(\rho)} p_\rho,
\end{equation}
where the sum is over all partitions $\rho=(\rho_1,\rho_2,\dots,\rho_k)$, $\mathfrak z(\rho)=\prod_i i^{m_i(\rho)}m_i(\rho)!$, $m_i(\rho)$ is the number of parts $\rho_j$ of $\rho$ that are equal to $i$, and $\chi^\lambda(\rho)$ stands for the character of the representation of the symmetric group corresponding to the partition $\lambda$ evaluated at an element of cycle type $\rho$.

A second way to define the Schur functions is by their characterization as being the symmetric functions that specialize (by setting the variables $x_{n+1},x_{n+2},\dots$ to zero) to the \emph{Schur polynomials}, defined by quotients of determinants:
\begin{equation}
s_\lambda(x_1,x_2,\dots,x_n,0,0,\dots)=\frac{\det(x_i^{\lambda_j+n-j})_{i,j=1,\dots,n}}{\det(x_i^{n-j})_{i,j=1,\dots,n}}.
\end{equation}
This form of the definition, together with equation \eqref{powertoschur} gives a way to compute the characters $\chi^\lambda(\rho)$. Other ways are described in the following sections.

\section{The hook formula}\label{sec:hookformula}
The formula obtained in 1954 by J. Frame, G. Robinson, and R. Thrall \cite{framerobinsonthrall} to calculate the characters $\chi^\lambda(1,1,\dots,1)$, has become known as the \emph{hook formula} due to the combinatorial objects it involves.

In order to introduce these concepts, we first remark that to a partition $\lambda=(\lambda_1,\lambda_2,\dots,\lambda_k)$, where $\lambda_1\geq\lambda_2\geq\cdots\geq\lambda_k$, we can assign a \emph{Young diagram} consisting of rows of $\lambda_i$ square boxes. For example, if our partition is $(5,4,4,2)$, the diagram looks like this:
\[\yng(5,4,4,2)\]
In this diagram, each square is called a \emph{cell}. To each cell, we can assign several quantities. For our purposes, the most important one is the \emph{hook length}, which we now define. For a given cell, the \emph{hook} consists of the cell itself, together with all the cells to the right of it and all  the cells below it. So, for example, in this diagram we have marked with bullets the hook of cell $(2,2)$:
\[\young(\hfil\hfil\hfil\hfil\hfil,\hfil\bullet\bullet\bullet,\hfil\bullet\hfil\hfil,\hfil\bullet)\]
The \emph{hook length} is the number of cells in the hook. In the example of the last diagram, the hook length is 5. In the following diagram we have filled in each cell with the corresponding hook length:
\[\young(87541,6532,5421,21)\]

Now let $\lambda$ be a partition of $n$, and let $h(\lambda)$ be the product of the hook lengths of $\lambda$. For the example above, $\lambda=(5,4,4,2)$, $n=15$, and $h(\lambda)=16\,128\,000$. Then the hook formula, discovered by Frame, Robinson, and Thrall, says that
\begin{equation}\label{hookformula}
\dim\lambda=\chi^\lambda(1,1,\dots,1)=\frac{n!}{h(\lambda)}.
\end{equation}
Here, $(1,1,\dots,1)$ stands for the partition with $n$ parts equal to 1.
There is an alternative way to express this as a function of the coordinates of the partition $\lambda$:
\begin{equation}\label{eq:hookalternative}
\dim\lambda=\frac{n!\prod_{i<j}(\lambda_i-\lambda_j-i+j)} {\prod_i(\lambda_i+\ell(\lambda)-i)!}.
\end{equation}
In the example above, formula \eqref{hookformula} gives
\[\dim(5,4,4,2)=\chi^{(5,4,4,2)}(1,1,\dots,1)=\frac{15!}{h((5,4,4,2))} =81\,081.\]
A proof of the hook formula can be found in the original paper \cite{framerobinsonthrall}, and also in \cite[Examples I.1.1 and I.7.6]{macdonald}, \cite[Section 3.10]{sagan}, and \cite[Corollary 7.21.6]{stanley}. 
\section{The Murnaghan-Nakayama rule}\label{murnaghannakayamarule}
The Murnaghan-Nakayama rule gives a combinatorial algorithm to compute the value of the character $\chi^\lambda(\mu)$ of an irreducible representation of the symmetric group $S(n)$ corresponding to the partition $\lambda$ and evaluated at an element of cycle type $\mu$.

In order to state the rule, we need to define some combinatorial objects. Let $\lambda$ and $\mu$ be two partitions, and assume that $\mu_i\leq\lambda_i$, $i=1,2,\dots$ A \emph{skew Young diagram} $\lambda/\mu$, consists of the cells of the Young diagram of $\lambda$ that would not belong to the Young diagram of $\mu$ if we were to overlap them. For example, let $\mu=(3,1)$ and $\lambda=(5,4,4,2)$. Then
\[\lambda/\mu=\young(:::\hfil\hfil,:\hfil\hfil\hfil,\hfil\hfil\hfil\hfil,\hfil\hfil).\]
A special kind of Young diagram that we will be interested in is the \emph{strip}, which consists of a connected skew Young diagram that does not contain a $2\times2$ block:
\[\yng(2,2)\] 
A skew Young diagram is connected if we can get from any cell to any other cell jumping on \emph{adjacent} cells. For example, the following are strips:
\begin{equation}\label{strips}
\young(::\hfil,::\hfil,::\hfil,:\hfil\hfil,:\hfil,\hfil\hfil) \qquad\young(\hfil\hfil\hfil\hfil,\hfil,\hfil,\hfil)\qquad \young(\hfil,\hfil,\hfil,\hfil,\hfil)
\end{equation}
The following are \emph{not} strips:
\[\young(::\hfil,::\hfil,::\hfil,:\hfil,:\hfil,\hfil\hfil)\qquad \young(\hfil,\hfil\hfil,:\hfil,\hfil\hfil)\qquad\young(:::::\hfil,::\hfil\hfil\hfil\hfil,:\hfil\hfil,\hfil\hfil\hfil,\hfil)\]
Here, the first example is disconnected (it is not enough to have a common corner for cells to be adjacent), the second example cannot be obtained as a difference of Young diagrams, and the third contains a $2\times2$ block.

Now suppose that the partitions $\mu$ and $\lambda$ are both partitions of the same number, that is, $\sum \lambda_i=\sum \mu_i$. A \emph{$\mu$-strip decomposition} of the partition $\lambda$ is a way to recover $\lambda$ by successively adjoining strips of size $\mu_1,\mu_2,\dots$. More precisely, it is a sequence of partitions $\eta^1,\eta^2,\dots,\eta^k$, where $k$ is the number of parts in $\mu$, such that $\eta^1$ is a strip of size $\mu_1$, $\eta^i/\eta^{i-1}$ is a strip of length $\mu_i$, $i=2,\dots, k$, and $\eta^k=\lambda$. For example, if $\lambda=(5,4,4,2)$ and $\mu=(5,3,2,2,2,1)$, a $\mu$-strip decomposition of $\lambda$ can be represented as follows: we will use number 1 to represent the strip of size 5, then number 2 to represent the strip of size 3, and so on, always assigning the number $i$ to the strip of length $\mu_i$:
\begin{equation}\label{decompositions}
\young(11136,1223,1255,44)\qquad\young(11233,1225,1445,16)\qquad\young(11244,1225,1335,16)
\end{equation}
Note that the second and third examples are different only in the order of the third and fourth strips.

The \emph{height} of a strip $\lambda/\mu$ is equal to the \emph{vertical} distance between the center of a cell in the lowest row of the strip to the center of a cell in the highest row of the strip, assuming that each cell has side length 1. For example, the strips in \eqref{strips} have heights 5, 3, and 4, respectively. The \emph{height of a $\mu$-strip decomposition} equals the sum of the height of the involved strips. For example, the height of the first decompositions in \eqref{decompositions} is $2+1+1+0+0+0=4$, and the height of the other two is $3+1+0+0+1+0=5$.

The \emph{Murnaghan Nakayama rule} is the simple assertion that
\begin{equation}\label{murnakruleeq}
\chi^\lambda(\mu)=\sum_{\{\eta_i\}} (-1)^{\height \{\eta^i\}},
\end{equation}
where the sum is taken over all the $\mu$-strip decompositions $\{\eta^i\}$ of $\lambda$. For example, if $\lambda=(4,3,2)$ and $\mu=(6,2,1)$, then there are exactly two $\mu$-strip decompositions of $\lambda$, namely (in the same notations as above),
\[\young(1111,123,12)\qquad\textrm{and}\qquad\young(1111,122,13)\]
The heights of these decompositions are $2+1+0=3$ and $2+0+0=2$, respectively, so the Murnaghan-Nakayama rule implies that
\[\chi^{(4,3,2)}((6,2,1))=(-1)^3+(-1)^2=0.\]

Proofs of the Murnaghan-Nakayama rule can be found in \cite[Exercises 3.11 and 7.5]{macdonald}, \cite[Section 4.10]{sagan}, or \cite[Theorem 7.17.3]{stanley}.

\section{The Littlewood-Richardson rule}\label{littlewoodrichardson}
Recall that the Schur functions $s_\lambda$ constitute a linear basis of the space of symmetric functions. The \emph{Littlewood-Richardson numbers} $c^\lambda_{\mu\nu}$ give the linear decomposition of products of Schur functions in terms of that basis:
\[s_\mu s_\nu=\sum_\lambda c^\lambda_{\mu\nu}s_\lambda.\]
The \emph{Littlewood-Richardson rule} gives a way to compute these numbers in terms of certain combinatorial objects.

Good references for this topic are \cite[Section I.9]{macdonald}, \cite[Section A1.3]{stanley}.

\section{The limit shape in the uniform distribution}\label{limitshapeuniform}
Let $\Young_n$ be the set of Young diagrams corresponding to all partitions of $n$, rescaled by $1/\sqrt n$. This means that we construct the diagrams with squares of side $1/\sqrt n$.

It is a very interesting phenomenon that, for large $n$, most of the diagrams in $\Young_n$ are very similar. In fact, once rescaled, most of them are very close to the curve
\[e^{-\frac\pi{\sqrt6}x}+e^{-\frac\pi{\sqrt6}y}=1.\]
Here is a simulation:

\begin{center}
\includegraphics{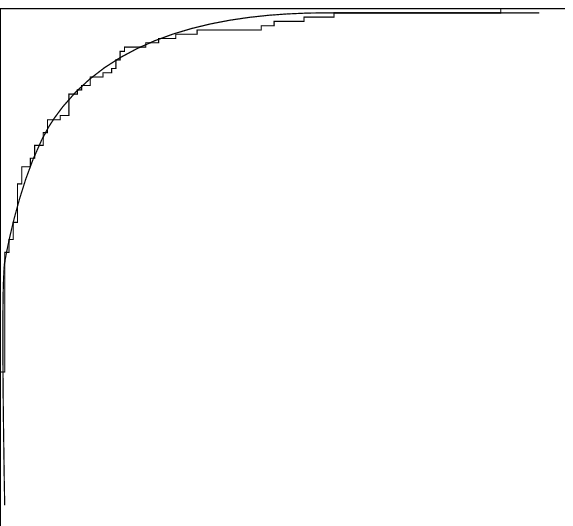}
\end{center}

In the picture we see the graph of the curve corresponding to the limit shape, together with the contour of a random partition of very large size $n$ rescaled by $1/\sqrt n$. The $x$-axis is horizontal and increases to the right; the $y$-axis is vertical and increases downwards.

This phenomenon was first remarked by A. Vershik \cite{vershik96}, who mentioned it informally to the Szalay and Turan after they had published a paper \cite{szalayturan} from where it can be deduced immediately. A more direct argument can be found in \cite{okounkovlimitshapes}. This result also follows from the Large Deviation Principle proved by A. Dembo, A. Vershik and O. Zeitouni \cite{dembovershikzeitouni}. 
\section{The branching rule and Frobenius reciprocity}\label{branchingrule}
The restriction $\res_{S(m)} V$ of a representation $\rho:S(n)\to GL(V)$ of $S(n)$ to a smaller symmetric group $S(m)$, $m<n$, induces a restriction $\res_{S(m)}\chi$ of the corresponding character $\chi$. If $\rho$ is irreducible and corresponds, say, to the partition $\lambda$ of $n$, $\res_{S(m)} \rho$ is, in general, reducible. The \emph{branching rule} gives its decomposition into irreducible representations. It states that, if $n=m+1$ and $V^\lambda$ is the irreducible representation corresponding to the partition $\lambda$, then
\[\res_{S(m)}V^\lambda=\bigoplus_{\lambda = \mu\cup\square}V^{\mu}, \]
where the direct sum is taken over all partitions $\mu$ of $m$ from which $\lambda$ can be obtained by adjoining one cell $\square$.
For example,
\[
\res_{S(11)}V^{(4,3,3,2)}=V^{(4,3,3,1)}\oplus V^{(4,3,2,2)} \oplus V^{(3,3,3,2)}.
\]
Note that, iterating, this gives a complete description of the decomposition corresponding to any pair $m<n$.
For a proof, see for example \cite[Section 2.8]{sagan} or \cite[Section 7.3]{fulton}.

A representation $\rho:S(m):\to GL(V)$ induces a representation $\ind_{S(n)} V$ of $S(n)$.
There is a version of the branching rule in this case:
\[\ind_{S(n)}V^\lambda=\bigoplus_{\lambda \cup \square= \mu}V^{\mu}, \]
where the sum is taken over all partitions $\mu$ that can be formed by adjoining a cell $\square$ to $\lambda$. Let $\chi$ be the character of $\rho$ and denote by $\ind_{S(n)} \chi$ the character of the induced representation. There is a way to compute $\ind_{S(n)} \chi$ using Frobenius reciprocity, described next.

The space of characters on $S(n)$ has an inner product
\[\langle \chi_1,\chi_2\rangle_{S(n)}=\frac 1{n!} \sum_{\sigma\in S(n)} \chi_1(\sigma)\chi_2(\sigma).\]
The characters of irreducible representations turn out to be orthonormal with respect to this inner product.
The relation
\[\langle \ind_{S(n)}\chi_1,\chi_2\rangle_{S(n)}=\langle \chi_1,\res_{S(m)}\chi_2\rangle_{S(m)},\]
which is true for all characters $\chi^1$ and $\chi^2$ of representations of $S(m)$ and $S(n)$, $m<n$, respectively, is known as \emph{Frobenius reciprocity}. For a proof see \cite[Section 3.3]{fultonharris}, \cite[Theorem 1.12.6]{sagan}.

\section{The half-infinite wedge Fermionic Fock space}\label{fock}
In this section we describe an algebraic framework to work with partitions and their connections to the symmetric group. Other good sources for this material are \cite[Chapter 14]{kac}, \cite{miwajimbodate}, and \cite[Section 2]{GWH1}.

Let $\lambda=(\lambda_1,\lambda_2,\dots,\lambda_k)$ be a partition. For convenience, we identify this vector representation with an infinite sequence by extending it with zeros: \[\lambda=(\lambda_1,\lambda_2,\dots,\lambda_k,0,0,\dots),\]
so that $\lambda_i=0$ for $i>k$. Now let $\xi_i=\lambda_i-i+\frac12$ be the \emph{modified Frobenius coordinates} of $\lambda$. Obviously, for $i>k$, $\xi_i=-i+\frac12$.

Let $\dots,\underline {\frac52},\underline{\frac32},\underline{\frac12},\underline {-\frac12},\underline{-\frac32},\underline{-\frac52},\dots$ be a bidirectionally-infinite sequence of vectors that form a basis of a countably-infinite dimensional vector space $V$. The \emph{0-charge sector of Fermionic Fock space} $\Lambda_0^{\frac\infty2}V$ (also known as the \emph{half-infinite wedge} $\Lambda_0^{\frac{\infty}2}$) is a vector space generated by the basis of vectors $v_\lambda$ indexed by partitions $\lambda=(\lambda_1,\lambda_2,\dots)$. These are defined by:
\[v_\lambda=\underline{\xi_1}\wedge\underline{\xi_2}\wedge \underline{\xi_3}\wedge\cdots
=\underline{\lambda_1-\tfrac12}\wedge\underline{\lambda_2-\tfrac32} \wedge\underline{\lambda_3-\tfrac52}\wedge\cdots\wedge \underline{\lambda_i-i+\tfrac12}\wedge\cdots.\]
Here, $\lambda_1\geq\lambda_2\geq\lambda_3\geq\cdots\geq 0$ and the $\lambda_i$ eventually become zero. The symbol $\wedge$ indicates that the terms are anti-commutative For example, for $\lambda=(5,4,4,2)$, \[v_\lambda=\underline{\tfrac92}\wedge\underline{\tfrac52} \wedge\underline{\tfrac32}\wedge\underline{-\tfrac32}\wedge \underline{-\tfrac92}\wedge
\underline{-\tfrac{11}2}\wedge\underline{-\tfrac{13}2}\wedge\cdots.\]

The relation between $v_\lambda$ and $\lambda$ can be best visualized as follows. We rotate the partition $135^\circ$ and rescale by $\sqrt 2$, so that its contour is determined by segments going up or down one unit.  We place a white pebble on the horizontal axis every time the upper contour goes down, and we place a black pebble when it goes up. The pebbles are placed on the half integer numbers $\Z+\frac12$. For $\lambda = (5,4,4,2)$, we get the following picture.
\begin{center}
\includegraphics{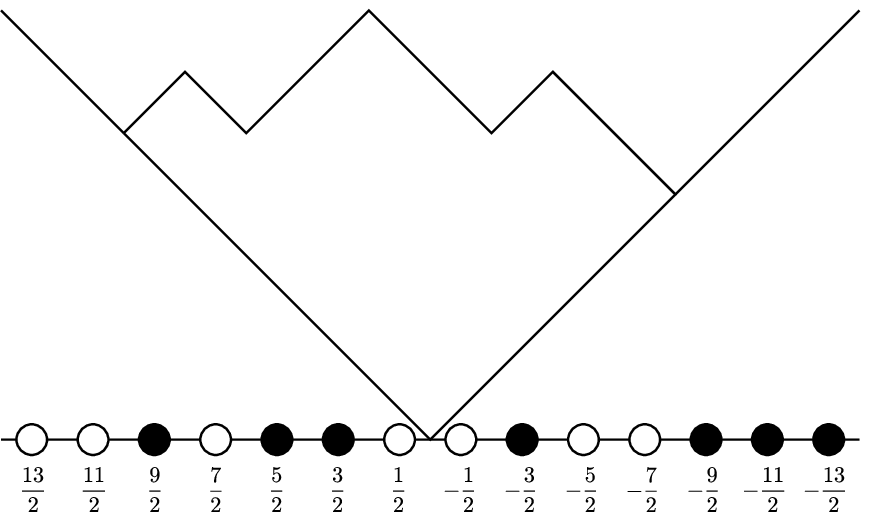}
\end{center}
If we think of the positive $x$-axis as being on the left, the positions of the black pebbles coincide exactly with the $\xi_i$'s. Hence, a chain of black and white pebbles like this is in one-to-one correspondence both with partitions $\lambda$ and with basis elements $v_\lambda$ of $\Lambda_0^{\frac\infty 2}V$.

The space $\Lambda_0^{\frac{\infty}2}V$ is contained inside \emph{fermionic Fock space} $\Lambda^{\frac{\infty}2}V$, the space generated by \emph{all} wedge products of the form
\begin{equation}\label{eq:generator}
v_{\{s_i\}}=\underline{ s_1}\wedge \underline{ s_2}\wedge \underline{ s_3}\wedge\cdots,
\end{equation}
where $s_i$ is a half-integer contained in $\Z+\frac12$, $s_1\geq s_2\geq s_3\geq\cdots$, and $s_i+1=s_{i+1}$ for $i$ large enough. The 0-charge sector is simply the subspace whose generators can be identified with partitions. Its basis of vectors of the form \eqref{eq:generator} is characterized by containing only those generators for which there are as many positive numbers $s_i$ as there are negative numbers $s_i$ missing from the wedge.

Let $k$ be a number in $\Z+\frac12$ and let $\psi_k$ be the \emph{creation operator} in $\Lambda^{\frac\infty2}V$ defined by
\[\psi_k(v)=\underline k\wedge v.\]
Note that $\underline k$ is added at the very beginning of the sequence, so the anti-commutativity may produce a sign (when comparing with the elements of the basis). For example,
\begin{align*}
\psi_{\frac32}(v_{(3,1)})&=\underline{\tfrac32}\wedge \left(\underline{\tfrac52}\wedge\underline{-\tfrac12}\wedge \underline{-\tfrac52}\wedge\underline{-\tfrac72}\wedge\cdots \right)\\
 &=- \left(\underline{\tfrac52}\wedge\underline{\tfrac32}\wedge\underline{-\tfrac12}
 \wedge\underline{-\tfrac52}\wedge\underline{-\tfrac72}\wedge\cdots\right)
\end{align*}
The anti-commutativity also implies that if the factor $\underline k$ is there already, the result is zero. For instance, $\psi_{\frac52}(v_{(3,1)})=0$.
We define an inner product in $\Lambda^{\frac\infty2}V$,
\[\left\langle v_{\{s_i\}},v_{\{t_i\}}\right\rangle=\delta_{\{s_i\},\{t_i\}},\]
where $\delta_{\{s_i\},\{t_i\}}$ is 1 if both sequences $s_i$ and $t_i$ are equal, and 0 otherwise. Then $\psi_k$ has an adjoint operator $\psi_k^*$, known as the \emph{destruction operator}. The idea is that, if the spot $k$ is empty (i.e., occupied by a white pebble), $\psi_k$ adds an electron (represented by a black pebble) there, and $\psi^*_k$ removes it. These operators satisfy the anti-commutation relations
\[\psi_i\psi_j^*+\psi_j^*\psi_i=\delta_{ij},\]
\[\psi_i\psi_j+\psi_j\psi_i=0=\psi_i^*\psi_j^*+\psi_j^*\psi_i^*.\]

Define the \emph{normally ordered product} by
\[:\psi_i\psi_j^*:=\left\{\begin{array}{ll}\psi_i\psi^*_j,&j>0,\\-\psi^*_j\psi_i,&j<0.\end{array}\right.\]
If $i>j$, the net effect of applying $:\psi_i\psi_j^*:$ to a vector $v_\lambda$, where $\lambda$ is some partition, is to add a strip to $\lambda$ if possible, and to add a sign according to the parity of the height of the strip (this should be reminiscent of equation \eqref{murnakruleeq}). For example,
\[:\psi_{\frac{11}2}\psi^*_{-\frac32}:v_{(5,4,4,2)}=-v_{(6,6,5,5)}.\]
This is illustrated in the following picture, which should be compared to the diagram above:
\begin{center}
\includegraphics{addstrip}
\end{center}
The height of the strip $(6,6,5,5)/(5,4,4,2)$ added in this procedure is 3, so the sign is indeed $(-1)^3=-1$:
 \[:\psi_{\frac{11}2}\psi^*_{-\frac32}:v_{(5,4,4,2)}=-v_{(6,6,5,5)}.\]
 If the addition of the strip is not possible, the result is zero.
 If $i<j$, the effect is precisely the opposite: a strip is removed and again a sign is added according to the parity of the height of the strip; if the strip cannot be removed, the result is zero.

Let $n>0$. We want to define operators $\alpha_{-n}$ and $\alpha_n$ that will, loosely speaking, respectively increase and decrease the energy of an electron by $n$ in all possible ways or, equivalently, they will respectively add and remove all possible strips of length $n$. We let, for $n$ in $\Z$,
\[\alpha_{-n}=\sum_{k\in \Z+\frac12}:\psi_{k+n}\psi_k^*:\]
For example,
\[\alpha_{-3}\,v_{(3,1)}=v_{(6,1)}-v_{(3,2,2)}+v_{(3,1,1,1,1)},\]
which graphically corresponds to:
\[\alpha_{-3}\left(\yng(3,1)\right)=\young(\hfil\hfil\hfil\bullet\bullet\bullet,\hfil) -\young(\hfil\hfil\hfil,\hfil\bullet,\bullet\bullet) +\young(\hfil\hfil\hfil,\hfil,\bullet,\bullet,\bullet).\]
The bullets $\bullet$ mark the cells corresponding to the added strips. Note that the signs correspond to the parities of the heights of these strips. Similarly,
\[\alpha_3\,v_{(5,4,3)}=v_{(5,4)}-v_{(3,3,3)}-v_{(5,2,2)},\]
or
\[\alpha_3\left(\yng(5,4,3)\right)=\yng(5,4)-\yng(3,3,3)-\yng(5,2,2).\]
Strips of the forms $\yng(3)$ and $\yng(2,1)$ have been removed, and again the signs correspond to the parity of their heights.

 Let $\emptyset$ stand for the empty partition. The vector $v_\emptyset=\underline{-\frac12}\wedge\underline{-\frac32}\wedge\underline{-\frac52}\wedge\cdots$ is known as the \emph{ground state}.

 Let $\mu$ be a partition. The following is an immediate consequence of the Murnag\-han-Nakayama rule, equation \eqref{murnakruleeq}:
 \[\alpha_{-\mu_1}\alpha_{-\mu_2}\cdots\alpha_{-\mu_k}v_\emptyset=\sum_\lambda \chi^\lambda(\mu)\,v_\lambda.\]
 The sum is over all partitions $\lambda$, but of course the character $\chi^\lambda(\mu)$ equals zero for all but finitely many of them. Since the characters $\chi^\lambda$ are linearly independent from each other, the vectors $\prod_i \alpha_{-\mu_i}v_\emptyset$ form a basis of the entire space $\Lambda^{\frac\infty2}_0V$.

 Note that the adjoint $\alpha^*_n$ of $\alpha_n$ satisfies
\[\alpha_n^*=\alpha_{-n},\]
and we have the commutation relations
\[[\alpha_k,\alpha_r]=k\,\delta_{-k,r}.\]

\section{The Jacobi theta function}\label{jacobithetas}
The \emph{Jacobi theta function}
\[\vartheta(x,q)=(q^{1/2}-q^{-1/2})\prod_{i=1}^\infty\frac{(1-q^ix)(1-q^i/x)}{(1-q^i)^2},\]
defined for arbitrary $x$ in $\C$ and $q$ in the unit disc $\{|q|<1\}\subset\C$, is important in our work. In this section we recall some of its properties.

The first property is its modularity. Let $\tau$ and $z$ be defined (modulo $\Z\subset\C$) by $q=e^{2\pi i \tau}$ and $x=e^{2\pi i z}$. It is traditional to abuse notation and write $\vartheta(z,\tau)$ for $\vartheta(x,q)$, and we will embrace this tradition here. As a consequence of the Poisson summation formula, $\vartheta$ satisfies
\[\vartheta\!\left(\frac z\tau,-\frac1\tau\right)=-\frac{e^{\pi i z^2/\tau}}{i\tau}\,\vartheta(z,\tau).\]

The next property is its additive expression, which can be derived using the \emph{Jacobi triple product}, which is the identity
\[\prod_{m=1}^\infty \left(1-x^{2m}\right)\left(1+x^{2m-1}y^2\right)\left(1+x^{2m-1}y^{-2}\right)=\sum_{n\in \Z}x^{n^2}y^{2n},\]
valid for all complex numbers $x$ and $y$ with $|x|<1$ and $y\neq0$. It follows that
\[\vartheta(x,q)=\eta^{-3}(q)\sum_{n\in \Z}(-1)^nq^{\frac{\left(n+\frac12\right)^2}{2}}x^{n+\frac12},\]
where $\eta$ stands for the \emph{Dedekind eta function},
\[\eta(q)=q^{1/24}\prod_{n=1}^\infty(1-q^n).\]

A second property we want to record for later reference is the expression of our $\vartheta$ in terms of the more classical Jacobi thetas, namely, \cite[page 17]{mumford}
\begin{align*}
\vartheta_{00}(z,\tau)&=\sum_{n\in\Z}\exp(\pi i n^2\tau+2\pi i n z) \\
\vartheta_{01}(z,\tau)&=\sum_{n\in\Z}\exp\!\left(\pi i n^2\tau+2\pi i n\left(z+\tfrac12\right)\right)\\
 &=\vartheta_{00}\!\left(z+\tfrac12,\tau\right)\\
 \vartheta_{10}(z,\tau)&=\sum_{n\in\Z}\exp\left(\pi i \left(n+\tfrac12\right)^2\tau+2\pi i \left(n+\tfrac12\right)z\right)\\
 &=\exp\!\left(\frac{\pi i \tau}4+\pi iz\right)\vartheta_{00}\!\left(z+\tfrac12 \tau,\tau\right)\\
 \vartheta_{11}(z,\tau)&=\sum_{n\in\Z}\exp\!\left(\pi i\left(n+\tfrac12\right)^2\tau+2\pi i\left(n+\tfrac12\right)\left(z+\tfrac12\right)\right) \\
 &=\exp\!\left(\frac{\pi i \tau}4+\pi i\left(z+\tfrac12\right)\right)\vartheta_{00}\!\left(z+\tfrac12(1+\tau),\tau\right)
\end{align*}
It follows that
\[\vartheta(z,\tau)=-i\eta^{-3}(\tau)\,\vartheta_{11}(z,\tau).\]
We further record the modular behavior of the traditional Jacobi thetas \cite[page 36]{mumford}:
\begin{align*}
\vartheta_{00}\!\left(\frac z\tau,-\frac1\tau\right)&=(-i\tau)^{\frac12}\exp\!\left(\frac{\pi i z^2}\tau\right)\vartheta_{00}(z,\tau), \\
\vartheta_{01}\!\left(\frac z\tau,-\frac1\tau\right)&=(-i\tau)^{\frac12}\exp\!\left(\frac{\pi i z^2}\tau\right)\vartheta_{10}(z,\tau), \\
\vartheta_{10}\!\left(\frac z\tau,-\frac1\tau\right)&=(-i\tau)^{\frac12}\exp\!\left(\frac{\pi i z^2}\tau\right)\vartheta_{01}(z,\tau), \\
\vartheta_{11}\!\left(\frac z\tau,-\frac1\tau\right)&=-(-i\tau)^{\frac12}\exp\!\left(\frac{\pi i z^2}\tau\right)\vartheta_{11}(z,\tau).
\end{align*}
Note the additional minus sign in the rule for $\vartheta_{11}$.

\section{Quasimodular forms}\label{quasimodularforms}

We follow \cite[Section 3.3.7]{pillow}.

A \emph{quasimodular form} for a subgroup $\Gamma\subset SL_2(\Z)$ is the holomorphic part of an almost holomorphic modular form for $\Gamma$. A function of $|q|<1$ is called \emph{almost holomorphic} if it is a polynomial in $(\log|q|)^{-1}$ with coefficients that are holomorphic functions of $q$. They form a graded algebra $\mathcal{QM}(\Gamma)$. It is a theorem of M. Kaneko and D. Zagier \cite{kanekozagier} that the space of quasimodular forms is simply
\[\mathcal{QM}(\Gamma)=\Q[E_2]\otimes\mathcal M(\Gamma),\]
where $\mathcal M(\Gamma)$ denotes the space of modular forms with respect ot the subgroup $\Gamma$ and $E_2$ denotes the first of the Eisenstein series, which are in general defined by
\begin{equation*}
E_{2k}(q)=\frac{\zeta(1-2k)}2+\sum_{n=1}^\infty\left(\sum_{d|n}d^{2k-1}\right)q^n,\quad k=1,2,\dots
\end{equation*}
In particular,
\begin{equation}\label{eq:certaineisensteinseries}
E_2(q),E_2(q^2),E_4(q^2)\in\mathcal {QM}(\Gamma_0(4))
\end{equation}
where
\[\Gamma_0(4)=\left\{\bigl(\begin{smallmatrix}a&b\\c&d\end{smallmatrix}\bigr):c\equiv0\!\!\mod 4\right\}.\]
It is well known that these series \eqref{eq:certaineisensteinseries} algebraically generate all other Eisenstein series and their products, so these too are quasimodular forms. Hence, in order to show that a certain function is a quasimodular form, it  is sufficient to show that it is a polynomial in the $E_{2k}$.

\section{Ramified covers of a surface}\label{ramifiedcovers}

Let $X$ be a (possibly disconnected) compact Riemann surface. A non-singular holomorphic map $\pi:S\to X$ from a compact Riemann surface $S$ that does not collapse any connected component of $S$ to a single point in $X$ is called is a \emph{ramified cover} of $X$.

Two ramified covers $\pi:S\to  X$ and $\pi':S'\to X$ are said to be \emph{isomorphic} if there is a holomorphic bijection $\varphi:S\to S'$ such that $\pi=\pi'\circ\varphi$. The set of holomorphic bijections $S\to S$ is the \emph{group of automorphisms} $\aut(S)$ of $S$.

A ramified cover $\pi:S\to X$ is a local holomorphic diffeomorphism except at finitely many points $p_1,p_2,\dots, p_k \in X$. This means that every point $q\in X-\{p_1,\dots,p_k\}$ is contained in a disc $\Delta$ whose preimage $\pi^{-1}(\Delta)$ consists of a disjoint union of discs $D_1,D_2,\dots,D_d$ and the restrictions $\pi|_{D_\ell}:D_\ell\to \Delta$ are bijections. The number $d$ of such discs remains constant for all $q$ throughout $X-\{p_1,\dots,p_k\}$ and is known as the \emph{degree} of the cover $\pi$.

The points $p_1,p_2,\dots,p_k$ are known as \emph{ramification points} of the cover $\pi$. For each $i=1,2,\dots,k$, let $\{q_1^i,q_2^i,\dots,q_{\ell_i}^i\}$ be the set of preimages $\pi^{-1}(p_i)$. For every $p_i$, let $V_i$ be a small open set homeomorphic to a disc, containing $p_i$ and no other $p_j$. For every point $q^i_j$ there is an open set $U^i_j \ni q^i_j$ that consists of a single connected component of $\pi^{-1}(p_i)$. $U^i_j$ contains no other $q_k^\ell$, and is homeomorphic to an open disc. On $U^i_j$, it is always possible to find a complex local coordinate $z$ such that $\pi|_{U^i_j}(z)=z^{\eta_j^i}$ for some integer $\eta_j^i\geq1$. $\eta_j^i=1$ only for those points $q^i_j$ where $\pi$ is a local diffeomorphism. Then $\eta^i=(\eta^i_1,\eta^i_2,\dots)$ is a partition of $d$.

Encircling each of the $p_i$'s, we can pick a small non-self-intersecting loop $\Gamma_i\subset V_i$ whose preimages will be loops encircling each of the points $q^i_j$. Denote by $\tilde\Gamma^i_j\subset\pi^{-1}(\Gamma_i)\subset U^i_j$ the loop corresponding to the point $q^i_j$. Pick a point $a_i$ in each $\Gamma_i$. The set $\pi^{-1}(a_i)$ consists of exactly $d$ points, each of them located on one of the loops $\tilde\Gamma^i_j$.

Note that $\pi|_{\cup_j\tilde\Gamma^i_j}:\cup_j\tilde\Gamma^i_j\to\Gamma_i$ is a topological covering space, and so it has a group of deck transformations. These transformations permute the $d$ preimages of $a_i$. Their action can be intuitively understood by the following procedure: move a point around $\Gamma_i$ starting at $a_i$ until you are back in $a_i$, and look at the lift of this motion: each point in $\pi^{-1}(a_i)$ is taken around a segment of a loop until it gets to another point in $\pi^{-1}(a_i)$. Labeling the elements of $\pi^{-1}(a_i)$ with the numbers $1,2,\dots,d$, this determines a permutation $s_i$ in $S(d)$, known as the \emph{monodromy}. Note that the cycle structure of $s_i$ coincides with the partition $\eta^i$. The monodromy $\{s_1,s_2,\dots,s_k\}$ together with the additional data $(X,p_1,\dots,p_k)$, determines the cover $\pi$ up to isomorphism. The partition $\eta^i$ corresponds to the cycle type of $s_i$, and  $\eta=(\eta^1,\dots,\eta^k)$ is known as the \emph{ramification profile} of $\pi$.

Recall the \emph{Riemann-Hurwitz formula}: for an analytic mapping $\pi:S\to S'$ of degree $d$ between two curves $S$ and $S'$ ramified at $n$ points with ramification profile given by partitions $\eta^1,\dots,\eta^n$, the relation between their Euler characteristics $\chi(S)$ and $\chi(S')$ is
\begin{equation}\label{riemannhurwitz}
\chi\left(S\right)=d\cdot\chi(S')
-\sum_i\left(\left|\eta^i\right|-\ell\left(\eta^i\right)\right).
\end{equation}
The genus $g$ of a surface $S$ equals $(2-\chi(S))/2$.

\bibliography{bib}{}
\bibliographystyle{plain}
\end{document}